\documentclass[11pt, letterpaper]{article}  
\usepackage{geometry}                
\geometry{letterpaper}                   
\usepackage{graphicx}
\usepackage{amssymb, amsmath, amsthm, dsfont}
\usepackage{epstopdf}
\usepackage{empheq}
\newcommand{\R}{\mathbb{R}}
\newcommand{\D}{\mathbb{D}}
\newcommand{\EE}{\mathbb{E}}
\newcommand{\Z}{\mathbb{Z}}
\newcommand{\XX}{\mathbb{X}}
\newcommand{\HH}{\mathfrak{H}}
\newcommand{\dom}{\rm Dom}
\usepackage{color}
\newtheorem{theorem}{Theorem}[section]

\newtheorem{thm}[theorem]{Theorem}

\newtheorem{prop}[theorem]{Proposition}
\newtheorem{cor}[theorem]{Corollary}

\newtheorem{lemma}[theorem]{Lemma}

\newtheorem{remark}[theorem]{Remark}



\DeclareGraphicsRule{.tif}{png}{.png}{`convert #1 `dirname #1`/`basename #1 .tif`.png}
 
\numberwithin{theorem}{section}
\numberwithin{equation}{section}

\begin{document}
	
\title{Total variation estimates in  the Breuer-Major theorem}
\date{}
\author{David Nualart\thanks{David Nualart was supported by the NSF grant  DMS 1512891}  \,  and
Hongjuan Zhou}
\maketitle

\begin{abstract}
	This paper  provides estimates for  the convergence rate  of the total variation distance in the framework of the  Breuer-Major theorem, assuming  some smoothness properties of the  underlying function. The results are proved by applying new bounds for
	the total variation distance between a random variable  expressed as a divergence and a standard Gaussian random variable, which are derived 	 by a combination of techniques of  Malliavin calculus and Stein's method.  The representation
	of a functional of a Gaussian sequence  as a divergence is established by introducing a shift operator on the expansion in Hermite polynomials.
	 Some applications to the asymptotic behavior of power variations of the fractional Brownian motions and to the  estimation of the
Hurst parameter using power variations are presented.
	
\end{abstract}

{\bf Keywords:} Breuer-Major theorem, total variation,  Stein's method, Malliavin calculus, Hermite rank.

\section{Introduction}

Consider a centered stationary Gaussian  family of random variables   $X=\{ X_n , n\in \Z\}$  with unit variance.
  For all $k \in \Z$, set $\rho(k) = \EE(X_0 X_k)$, so $\rho(0)=1$ and $\rho(k) = \rho(-k)$.  
   We say that a function $g \in L^2( \mathbb{R}, \gamma)$, where $\gamma$ is the standard Gaussian measure, has  
  {\it Hermite rank}   $d\ge 1$ if
	\begin{equation}\label{hexp}
g(x)= \sum_{m=d} ^\infty c_m  H_m(x),
\end{equation}
 where $c_d \not =0$ and $H_m$ is the $m$th Hermite polynomial. We will  make use of    the following condition that relates the covariance function $\rho$ to the Hermite rank of a function $g$:
 \begin{equation} \label{h1}
\sum_{j \in \Z} |\rho(j)|^d < \infty.
\end{equation}
   Let us  recall the celebrated Breuer-Major theorem for functionals of the stationary Gaussian sequence $X$ (see \cite{bm}). 
  \begin{thm}[Breuer-Major theorem]\label{bm}
  Consider a centered stationary Gaussian  family of random variables   $X=\{ X_n , n\in \Z\}$  with unit variance and covariance function $\rho$.
	Let $g \in L^2( \mathbb{R}, \gamma)$ be a function with Hermite rank $d\ge 1$ and expansion (\ref{hexp}). Suppose that  (\ref{h1})
	holds true.
 Set 
	\begin{equation}\label{bm.sig}
		\sigma^2 = \sum_{m=d}^\infty m! c_m^2 \sum_{k \in \Z} \rho(k)^m.
	\end{equation}
Then the sequence
	  \begin{equation}\label{yn}
	  Y_n := \frac{1}{\sqrt{n}} \sum_{j=1}^n g(X_j) 
	  \end{equation}
  converges in law to the  normal distribution $N(0, \sigma^2)$.
   \end{thm}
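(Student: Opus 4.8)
The plan is to expand $g$ according to (\ref{hexp}), to prove the statement first when $g$ is a single Hermite polynomial by means of the fourth moment theorem, and then to pass to the general case by an $L^{2}$ truncation argument. It is convenient to work with an isonormal Gaussian process $W=\{W(h):h\in\HH\}$ over a real separable Hilbert space $\HH$ which we take to contain unit vectors $\{e_{n}\}_{n\in\Z}$ with $\langle e_{i},e_{j}\rangle_{\HH}=\rho(i-j)$, so that $\{W(e_{n})\}$ has the same law as $X$. Since $H_{m}(W(e_{j}))=I_{m}(e_{j}^{\otimes m})$ lies in the $m$th Wiener chaos, for every $m\ge d$ we may write
\begin{equation*}
F_{n}^{(m)}:=\frac{1}{\sqrt{n}}\sum_{j=1}^{n}H_{m}(X_{j})=I_{m}(f_{n,m}),\qquad f_{n,m}:=\frac{1}{\sqrt{n}}\sum_{j=1}^{n}e_{j}^{\otimes m},
\end{equation*}
and $Y_{n}=\sum_{m\ge d}c_{m}F_{n}^{(m)}$ with convergence in $L^{2}(\Omega)$. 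Note that $|\rho(k)|\le 1$ together with (\ref{h1}) gives $\sum_{k\in\Z}|\rho(k)|^{m}<\infty$ for every $m\ge d$, so all the series below converge absolutely; in particular the number $\sigma^{2}$ in (\ref{bm.sig}) is finite, and since $\sigma^{2}=\lim_{n}\EE[Y_{n}^{2}]$ (by the variance computation of Step 1 applied to the whole function $g$) it is nonnegative.

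\emph{Step 1: single chaos.} Fix $m\ge d$. From $\EE[H_{m}(X_{i})H_{m}(X_{j})]=m!\,\rho(i-j)^{m}$ one obtains
\begin{equation*}
\EE\bigl[(F_{n}^{(m)})^{2}\bigr]=m!\sum_{|k|<n}\Bigl(1-\tfrac{|k|}{n}\Bigr)\rho(k)^{m}\ \longrightarrow\ \sigma_{m}^{2}:=m!\sum_{k\in\Z}\rho(k)^{m}
\end{equation*}
by dominated convergence. Since $F_{n}^{(m)}=I_{m}(f_{n,m})$ belongs to a fixed chaos, the fourth moment theorem of Nualart and Peccati, in the Malliavin version of Nualart and Ortiz-Latorre, reduces the convergence of $F_{n}^{(m)}$ in distribution to $N(0,\sigma_{m}^{2})$ to checking that, for each $r=1,\dots,m-1$,
\begin{equation*}
\|f_{n,m}\otimes_{r}f_{n,m}\|_{\HH^{\otimes 2(m-r)}}^{2}=\frac{1}{n^{2}}\sum_{i,j,k,l=1}^{n}\rho(i-j)^{r}\,\rho(k-l)^{r}\,\rho(i-k)^{m-r}\,\rho(j-l)^{m-r}\ \longrightarrow\ 0 .
\end{equation*}
This contraction estimate is the heart of the matter and the step I expect to be the main obstacle. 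The plan is to change variables $u=i-j$, $v=j-l$, $w=l-k$ (for each such triple there are at most $n$ admissible $(i,j,k,l)$), which bounds the left–hand side by $\tfrac{1}{n}\sum_{|u|,|v|,|w|<n}|\rho(u)|^{r}|\rho(v)|^{m-r}|\rho(w)|^{r}|\rho(u+v+w)|^{m-r}$, and then to show this triple sum is $o(n)$. The latter is done by combining the Cauchy--Schwarz and Young convolution inequalities with $|\rho|\le 1$ and (\ref{h1}): because $m\ge d$ one can always redistribute the exponents $r$ and $m-r$ so that only powers of $|\rho|$ summable over $\Z$ remain, the borderline cases being controlled by the elementary estimate $\sum_{|k|<n}|\rho(k)|^{p}\le(2n)^{1-p/d}\bigl(\sum_{k\in\Z}|\rho(k)|^{d}\bigr)^{p/d}$ valid for $p<d$.

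\emph{Step 2: finitely many chaoses and truncation.} For $m\ne m'$ one has $\EE[F_{n}^{(m)}F_{n}^{(m')}]=0$ by orthogonality of the Wiener chaoses, so Step 1 and the multivariate fourth moment theorem of Peccati and Tudor give, for every $N\ge d$, that $(F_{n}^{(d)},\dots,F_{n}^{(N)})$ converges in distribution to a centered Gaussian vector with covariance $\mathrm{diag}(\sigma_{d}^{2},\dots,\sigma_{N}^{2})$; hence $Y_{n}^{(N)}:=\sum_{m=d}^{N}c_{m}F_{n}^{(m)}$ converges in distribution to $N(0,\sigma_{N}^{2})$ with $\sigma_{N}^{2}:=\sum_{m=d}^{N}m!\,c_{m}^{2}\sum_{k\in\Z}\rho(k)^{m}$. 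On the other hand, using orthogonality, the variance formula above, and $|\rho(k)|^{m}\le|\rho(k)|^{d}$ for $m\ge d$,
\begin{equation*}
\sup_{n}\EE\bigl[(Y_{n}-Y_{n}^{(N)})^{2}\bigr]=\sup_{n}\sum_{m>N}m!\,c_{m}^{2}\sum_{|k|<n}\Bigl(1-\tfrac{|k|}{n}\Bigr)\rho(k)^{m}\le\Bigl(\sum_{k\in\Z}|\rho(k)|^{d}\Bigr)\sum_{m>N}m!\,c_{m}^{2},
\end{equation*}
which tends to $0$ as $N\to\infty$ since $\sum_{m\ge d}m!\,c_{m}^{2}=\|g\|_{L^{2}(\R,\gamma)}^{2}<\infty$; the same bound shows $\sigma_{N}^{2}\to\sigma^{2}$. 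A standard approximation argument then finishes the proof: working with characteristic functions and the inequality $|\EE e^{\mathrm{i}tY_{n}}-\EE e^{\mathrm{i}tY_{n}^{(N)}}|\le|t|\bigl(\EE[(Y_{n}-Y_{n}^{(N)})^{2}]\bigr)^{1/2}$, one first chooses $N$ large uniformly in $n$ and then lets $n\to\infty$, obtaining that $Y_{n}$ converges in distribution to $N(0,\sigma^{2})$, as claimed.

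Finally, an alternative for Step 1 is the method of moments of the original paper of Breuer and Major: one checks through the diagram formula for Hermite polynomials that every joint cumulant of $Y_{n}$ of order at least three tends to zero, which together with the convergence of the variance yields the conclusion; the Malliavin-calculus route is, however, the one on which the quantitative estimates of the present paper are built.
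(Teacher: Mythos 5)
The paper does not actually prove Theorem \ref{bm}: it is recalled from Breuer and Major \cite{bm}, so there is no internal proof to compare with. Your architecture (fixed chaos via the fourth moment theorem of \cite{nunugio} in the Malliavin form of \cite{nuaort}, the multivariate version of Peccati--Tudor for finitely many chaoses, then an $L^2$ truncation in $N$ with a characteristic-function argument) is the standard modern route and Steps 2 and the truncation estimates are correct as written, including the uniform bound $\sup_n\EE[(Y_n-Y_n^{(N)})^2]\le (\sum_k|\rho(k)|^d)\sum_{m>N}m!c_m^2$ and the identification $\sigma_N^2\to\sigma^2$.

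The genuine gap is exactly at the step you flag as the heart of the matter: the proof that $\|f_{n,m}\otimes_r f_{n,m}\|\to0$. After your change of variables you must show that $S_n:=\sum_{|u|,|v|,|w|<n}|\rho(u)|^r|\rho(v)|^{m-r}|\rho(w)|^r|\rho(u+v+w)|^{m-r}$ is $o(n)$, and the plan you give (redistribute the exponents by Cauchy--Schwarz/Young and control borderline powers by $\sum_{|k|<n}|\rho(k)|^p\le Cn^{1-p/d}$) cannot deliver this in the critical case $m=d$ (more generally $d\le m<\tfrac32 d$). Indeed, any bound of the multilinear H\"older/Young type $S_n\le\prod_{i=1}^4\|F_i\|_{\ell^{p_i}}$ requires $\sum_i 1/p_i=3$, while the exponents for which the four norms stay bounded satisfy $1/p_1,1/p_3\le r/d$ and $1/p_2,1/p_4\le(m-r)/d$, whose total is $2m/d=2<3$ when $m=d$; combining with $\|\,|\rho|^s\mathbf{1}_{|\cdot|<n}\|_{\ell^p}\le Cn^{(1/p-s/d)^+}$ the resulting power of $n$ is at least $3-2=1$, i.e.\ this scheme can only give $S_n=O(n)$, hence $\|f_{n,m}\otimes_r f_{n,m}\|^2=O(1)$ rather than $o(1)$. (And no power-saving bound $S_n=O(n^{1-\varepsilon})$ with $\varepsilon$ independent of $\rho$ is available: for $\rho(k)\sim|k|^{-\alpha}$ with $\alpha d\downarrow1$ one has $S_n$ of order $n^{1-2(\alpha d-1)}$, arbitrarily close to $n$.) The missing ingredient is a limiting argument on top of the power counting, e.g.\ the classical truncation: split $\rho=\rho\mathbf{1}_{\{|k|\le K\}}+\rho\mathbf{1}_{\{|k|>K\}}$, use that the finitely supported part contributes $O(K^c/n)$ to $\|f_{n,m}\otimes_r f_{n,m}\|^2$ while every term containing a tail factor is controlled by H\"older through $\sum_{|k|>K}|\rho(k)|^d$, which is small for large $K$; letting $n\to\infty$ and then $K\to\infty$ gives the contraction convergence (this is how the standard proofs, e.g.\ in \cite{np-book} or \cite{noupecpod}, close the argument). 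With that ingredient inserted, the rest of your proof is complete; as a minor point, if $\sigma_m^2=0$ for some $m$ the corresponding component converges to $0$ in $L^2$, which the Peccati--Tudor step accommodates without change.
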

   
   The purpose of this paper is to show that, under suitable regularity assumptions on the function $g$, the sequence $Y_n /\sigma_n$,
   where $\sigma_n^2 = \EE(Y_n^2)$,  converges in the total variation distance to the standard normal law $N(0,1)$, and we can estimate the rate of convergence in terms of the covariance function $\rho$. To show these results we will apply a combination of Stein's method for normal approximations and techniques of Malliavin calculus.  The combination of Stein's method with Malliavin calculus to study normal approximations was first developed by Nourdin and Peccati  (see the pioneering work \cite{np-ptrf} and the monograph  \cite{np-book}).  For random variables on a fixed Wiener chaos, these techniques provide a quantitative version of the {\it Fourth Moment Theorem} proved by Nualart and Peccati in \cite{nunugio}. A basic result in this direction is the following proposition.
   Along the paper $Z$ will denote a $N(0,1)$  random variable.

   \begin{prop}\label{bd.4m-1}
	Let $F$ be a random variable in  the $q$th ($q \geq 2$) Wiener chaos with unit variance.  Then
	\begin{equation} \label{equ80}
d_{\rm TV}(F, Z) \leq 2 \sqrt{{\rm Var} \left(\frac{1}{q}\|DF\|_{\mathfrak{H}}^2\right)} \leq 2\sqrt{\frac{q-1}{3q}(\EE(F^4)-3)}\,,
\end{equation}
where $D$ denotes the derivative in the sense of Malliavin calculus and $d_{\rm TV}$ is the total variation distance.
\end{prop}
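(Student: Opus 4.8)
The plan is to combine Stein's method for the total variation distance with the integration-by-parts formula of Malliavin calculus. First I would recall that for every Borel set $B\subset\R$ the Stein equation $f'(x)-xf(x)=\mathbf 1_B(x)-\EE[\mathbf 1_B(Z)]$ has a solution $f_B$ with $\|f_B'\|_\infty\le 2$; since $\EE[\mathbf 1_B(F)]-\EE[\mathbf 1_B(Z)]=\EE[f_B'(F)-Ff_B(F)]$, this gives
\[
 d_{\rm TV}(F,Z)\ \le\ \sup\big\{\,\big|\EE[f'(F)-Ff(F)]\big|\ :\ \|f'\|_\infty\le 2\,\big\}.
\]
Because $F$ lies in the $q$th chaos, $-DL^{-1}F=\tfrac1q DF$, where $L=-\delta D$ is the Ornstein--Uhlenbeck generator; by the integration-by-parts formula together with the chain rule $Df(F)=f'(F)DF$, we have $\EE[Ff(F)]=\EE\big[\langle Df(F),-DL^{-1}F\rangle_{\HH}\big]=\tfrac1q\EE\big[f'(F)\,\|DF\|_{\HH}^2\big]$, hence $\EE[f'(F)-Ff(F)]=\EE\big[f'(F)\big(1-\tfrac1q\|DF\|_{\HH}^2\big)\big]$. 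Using $\|f'\|_\infty\le 2$, the identity $\EE[\|DF\|_{\HH}^2]=q\,\EE[F^2]=q$, and the Cauchy--Schwarz inequality, I would conclude $d_{\rm TV}(F,Z)\le 2\,\EE\big|1-\tfrac1q\|DF\|_{\HH}^2\big|\le 2\sqrt{{\rm Var}\big(\tfrac1q\|DF\|_{\HH}^2\big)}$, which is the first inequality in \eqref{equ80}.

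For the second inequality the key tool is the operator identity $\|DF\|_{\HH}^2=\tfrac12 L(F^2)+q\,F^2$, which follows from $D(F^2)=2F\,DF$, from $\delta(F\,DF)=F\,\delta(DF)-\|DF\|_{\HH}^2=-F\,LF-\|DF\|_{\HH}^2=qF^2-\|DF\|_{\HH}^2$, and from $L=-\delta D$. Let $J_m$ denote the orthogonal projection onto the $m$th Wiener chaos. By a parity and degree argument the chaos expansion of $F^2$ has nonzero components only for even $m\in\{0,2,\dots,2q\}$, with $J_0(F^2)=\EE[F^2]=1$; since $L$ acts as $-m$ on the $m$th chaos, applying $\tfrac12 L+q$ and subtracting the constant term gives
\[
 \tfrac1q\|DF\|_{\HH}^2-1=\sum_{r=1}^{q-1}\frac{r}{q}\,J_{2q-2r}(F^2),
\]
whence, by orthogonality of the chaoses, ${\rm Var}\big(\tfrac1q\|DF\|_{\HH}^2\big)=\tfrac1{q^2}\sum_{r=1}^{q-1}r^2\,\EE[J_{2q-2r}(F^2)^2]$.

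Next I would express the fourth moment in the same terms: integrating by parts once more with $F^3$ in place of $f(F)$ yields $\EE[F^4]=\tfrac3q\EE[F^2\|DF\|_{\HH}^2]$, and since $\EE[F^2]=1$, $\EE[\|DF\|_{\HH}^2]=q$, this reads $\EE[F^4]-3=\tfrac3q\,{\rm Cov}(F^2,\|DF\|_{\HH}^2)$. Feeding in the chaos expansions of $F^2$ and of $\|DF\|_{\HH}^2$ from the previous step, orthogonality gives ${\rm Cov}(F^2,\|DF\|_{\HH}^2)=\sum_{r=1}^{q-1}r\,\EE[J_{2q-2r}(F^2)^2]$, so $\EE[F^4]-3=\tfrac3q\sum_{r=1}^{q-1}r\,\EE[J_{2q-2r}(F^2)^2]$. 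Writing $a_r:=\EE[J_{2q-2r}(F^2)^2]\ge 0$, the two representations are now compared termwise by means of the elementary inequality $r^2\le (q-1)\,r$ for $1\le r\le q-1$:
\[
 {\rm Var}\Big(\tfrac1q\|DF\|_{\HH}^2\Big)=\frac1{q^2}\sum_{r=1}^{q-1}r^2\,a_r\ \le\ \frac{q-1}{q^2}\sum_{r=1}^{q-1}r\,a_r\ =\ \frac{q-1}{3q}\big(\EE[F^4]-3\big),
\]
which, together with the first inequality, establishes \eqref{equ80}.

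I expect the bulk of the work to be the bookkeeping that produces the two matching chaos representations --- in particular verifying the operator identity $\|DF\|_{\HH}^2=\tfrac12 L(F^2)+qF^2$ and tracking the chaos projections so that the coefficient of $a_r$ comes out as exactly $r^2/q^2$ in the variance and exactly $3r/q$ in the fourth cumulant; once that is in place, $r^2\le(q-1)r$ closes the argument at once. (Alternatively, the same two formulas can be obtained from the multiplication formula for multiple stochastic integrals, which expresses $J_{2q-2r}(F^2)$ in terms of symmetrized contractions $\widetilde{f\otimes_r f}$.) A minor technical point is the lack of smoothness of the Stein solutions $f_B$; since $F=I_q(f)$ with $f\neq 0$ has an absolutely continuous law, the chain rule and the integration-by-parts formula remain valid for Lipschitz $f$, so this causes no real difficulty.
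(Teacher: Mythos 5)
Your proposal is correct, but note that the paper never proves Proposition \ref{bd.4m-1}: it is quoted as a known result from Nourdin--Peccati (\cite{np-ptrf,np-book}), so there is no in-paper proof to match. Your first inequality is exactly the standard Stein--Malliavin argument (the same mechanism as the paper's Proposition \ref{stein.hd0}, specialized to $u=-DL^{-1}F=\tfrac1q DF$), and it is carried out correctly, including the remark about Lipschitz Stein solutions and absolute continuity of the law of $F$. For the second inequality you deviate from the classical contraction computation ($\mathrm{Var}(\tfrac1q\|DF\|^2)$ and $\EE(F^4)-3$ expressed through $\|f\widetilde\otimes_r f\|^2$ via the multiplication formula) and instead use the operator identity $\|DF\|_{\HH}^2=\tfrac12 L(F^2)+qF^2$ together with the chaos projections $J_{2q-2r}(F^2)$; I checked the identity ($L(F^2)=-\delta(2F\,DF)=2\|DF\|^2-2qF^2$), the resulting expansions $\tfrac1q\|DF\|^2-1=\sum_{r=1}^{q-1}\tfrac rq J_{2q-2r}(F^2)$ and $\EE(F^4)-3=\tfrac3q\sum_{r=1}^{q-1}r\,\EE[J_{2q-2r}(F^2)^2]$, and the closing step $r^2\le (q-1)r$, and the constants come out right. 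This route buys you a proof with no explicit contraction bookkeeping, at the cost of needing to know that $F^2$ charges only the even chaoses $0,2,\dots,2q$; your ``parity'' phrasing is the one soft spot, and the clean justification is the multiplication formula you mention parenthetically (alternatively, observe that only the projection $J_1$ could spoil the inequality $1-\tfrac m{2q}\le 1-\tfrac1q$, and $J_1(F^2)=0$). So the argument is complete and equivalent to the standard one.
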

In the context of the Breuer-Major theorem, this result can be applied to obtain a rate of convergence for the total variation distance 
$d_{\rm TV} (Y_n/ \sigma_n, Z)$, provided $g=H_d$  and condition (\ref{h1}) holds (see \cite{np-ptrf}). 
 Later on, the rate  of convergence was  improved  in \cite{bbl} using an approach based on  the spectral density.
 
   In the reference  \cite{np-15},  with an intensive  application of  Stein's method combined with Malliavin calculus, Nourdin and Peccati improved
  the estimate (\ref{equ80}), obtaining  the following matching upper and lower bounds for the total variation distance.
  
  \begin{prop}\label{bd.4m-2}
	 Let $F$ be a  random variable in  the $q$th ($q \geq 2$) Wiener chaos with unit variance.   Then, there exist constants $C_1, C_2>0$, depending on $q$,  such that  
	    \[
	C_1 \max\{ |\EE(F^3)|, \EE(F^4)-3\}    \le d_{\rm TV}(F, Z) \leq C_2 \max\{ |\EE(F^3)|, \EE(F^4)-3\} \,.
	\]
\end{prop}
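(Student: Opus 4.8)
The plan is to derive the upper bound from a \emph{second-order} version of the Stein--Malliavin method and the lower bound from an Edgeworth-type expansion of the density of $F$. Throughout put $\kappa_3=\EE[F^3]$, $\kappa_4=\EE[F^4]-3\ge 0$ and $M(F)=\max\{|\kappa_3|,\kappa_4\}$, and recall that on a fixed chaos hypercontractivity bounds $M(F)$ by a constant depending only on $q$, and that $M(F)=0$ precisely when $F\sim N(0,1)$. For the upper bound I would start from the standard estimate $d_{\rm TV}(F,Z)\le\sup_A|\EE[f'(F)-Ff(F)]|$, where $f=f_A$ solves $f'(x)-xf(x)=\mathbf 1_A(x)-\EE[\mathbf 1_A(Z)]$ and satisfies $\|f\|_\infty\le\sqrt{\pi/2}$, $\|f'\|_\infty\le 2$. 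Since $F$ lies in the $q$th chaos, $-DL^{-1}F=\tfrac1q DF$, so the Malliavin duality formula gives
\[
\EE[f'(F)-Ff(F)]=-\EE[f'(F)\,G],\qquad G:=\tfrac1q\|DF\|_{\mathfrak H}^2-1,\ \ \EE[G]=0 .
\]
Bounding $|\EE[f'(F)G]|$ by Cauchy--Schwarz only reproduces Proposition~\ref{bd.4m-1}, which is of order $\sqrt{\kappa_4}$ and not optimal; instead I would integrate by parts once more. Since $\EE[G]=0$ we may write $G=-\delta(DL^{-1}G)$, and then
\[
\EE[f'(F)\,G]=\EE[f''(F)\,\Gamma_2],\qquad \Gamma_2:=\langle DF,-DL^{-1}G\rangle_{\mathfrak H},
\]
while a further use of duality identifies the mean of this new weight, $\EE[\Gamma_2]=\EE[FG]=\tfrac12\kappa_3$. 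Subtracting it yields the second-order Stein bound
\[
d_{\rm TV}(F,Z)\ \le\ \tfrac12|\kappa_3|\,\sup_A\bigl|\EE[f''(F)]\bigr|\ +\ \sup_A\bigl|\EE\bigl[f''(F)(\Gamma_2-\EE[\Gamma_2])\bigr]\bigr| .
\]

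Carrying this through meets two difficulties. First, $f''$ is not bounded: differentiating Stein's equation gives $f''=f+xf'+(\mathbf 1_A-\EE[\mathbf 1_A(Z)])'$, whose singular part is a combination of point masses at the endpoints of $A$, so the manipulations above must be justified by smoothing. When $\kappa_4$ is bounded away from $0$ there is nothing to prove ($d_{\rm TV}\le 1\le M(F)/\kappa_4$), so I may assume $\kappa_4$ small; then $F$ has a density $p_F$ that is bounded with bounded variation, uniformly over all such $F$ (density-convergence estimates in the fourth-moment theorem), and the singular part is controlled accordingly, giving $\sup_A|\EE[f''(F)]|\le C_q$ and hence a bound $C_q|\kappa_3|$ for the first term. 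The second, and main, difficulty is to show that the remainder $\EE[f''(F)(\Gamma_2-\EE[\Gamma_2])]$ is of the genuinely smaller order $\kappa_4$ and not $\sqrt{\kappa_4}$: pairing the absolutely continuous part $f+xf'$ of $f''$ with $\Gamma_2-\EE[\Gamma_2]$ and using Cauchy--Schwarz requires $\mathrm{Var}(\Gamma_2)=O_q(\kappa_4^{2})$, while the singular part requires again the regularity of $p_F$ together with that of the conditional law of $\Gamma_2$ given $F$. The estimate $\mathrm{Var}(\Gamma_2)=O_q(\kappa_4^{2})$ is the crux: writing $F=I_q(f)$, the weight $\Gamma_2$ is polynomial in contractions of the kernel $f$ involving three copies, and by re-pairing the contractions one bounds its centered $L^2$-norm by products of the small quantities $\|f\otimes_r f\|$, $1\le r\le q-1$, whose sum of squares is comparable both to $\kappa_4$ and to $\mathrm{Var}(\tfrac1q\|DF\|_{\mathfrak H}^2)$. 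Assembling these pieces gives $d_{\rm TV}(F,Z)\le C_q(|\kappa_3|+\kappa_4)\le 2C_q\,M(F)$. I expect the combinatorial bookkeeping behind $\mathrm{Var}(\Gamma_2)=O_q(\kappa_4^2)$, run in parallel with the control of the singular part of $f''$ via properties of $p_F$ near the boundary of the support of $F$, to be the principal obstacle.

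For the lower bound I would use that $F$, lying in a fixed chaos, has a density with an Edgeworth-type expansion
\[
p_F(x)=\phi(x)\Bigl(1+\tfrac16\kappa_3 H_3(x)+\tfrac1{24}\kappa_4 H_4(x)+R(x)\Bigr),
\]
where $\phi$ is the standard normal density and $\|R\,\phi\|_{L^1(\R)}=o(M(F))$ as $M(F)\to 0$ (the next-order corrections are quadratic in, or of higher order than, the cumulants $\kappa_3,\kappa_4$, all of which tend to $0$ with $M(F)$). Then
\[
d_{\rm TV}(F,Z)=\tfrac12\int_{\R}\bigl|p_F(x)-\phi(x)\bigr|\,dx\ \ge\ \tfrac12\Bigl\|\tfrac16\kappa_3 H_3+\tfrac1{24}\kappa_4 H_4\Bigr\|_{L^1(\phi\,dx)}-\tfrac12\|R\,\phi\|_{L^1} .
\]
Since $H_3$ and $H_4$ span a two-dimensional subspace, on which all norms are equivalent, the first term is at least $c_q(|\kappa_3|+\kappa_4)\ge c_q M(F)$, so $d_{\rm TV}(F,Z)\ge\tfrac{c_q}{4}M(F)$ as soon as $M(F)$ is below a threshold $\delta_q$. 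For $M(F)\ge\delta_q$ I would argue crudely: by hypercontractivity all moments of $F$ are bounded by constants depending only on $q$, so truncating the test functions $x^3$ and $x^4$ and comparing with the Gaussian moments gives $d_{\rm TV}(F,Z)\ge c_q M(F)^{3}\ge c_q\delta_q^{2}M(F)$. Taking the smaller of the two constants completes the lower bound.
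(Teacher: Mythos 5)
A preliminary remark: the paper does not prove Proposition \ref{bd.4m-2}; it quotes it from Nourdin and Peccati \cite{np-15} (and only says that its own Proposition \ref{stein.hd} is \emph{inspired} by the upper-bound argument there). So the benchmark is the proof in \cite{np-15}, whose opening moves — Stein's equation, $-DL^{-1}F=\tfrac1q DF$, a second integration by parts producing the weight $\Gamma_2$ with $\EE(\Gamma_2)=\tfrac12\EE(F^3)$ — you reproduce correctly. The decisive step of your upper bound, however, is false: the claimed estimate $\mathrm{Var}(\Gamma_2)=O_q(\kappa_4^2)$ does not hold. Take $q=2$ and $F=\sum_i\lambda_i(N_i^2-1)$ with $N_i$ i.i.d.\ standard normal and $2\sum_i\lambda_i^2=1$; then $\kappa_3=8\sum_i\lambda_i^3$, $\kappa_4=48\sum_i\lambda_i^4$, and a direct computation gives $\Gamma_2=4\sum_i\lambda_i^3N_i^2$, hence $\mathrm{Var}(\Gamma_2)=32\sum_i\lambda_i^6$. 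Choosing $\lambda_1=-\lambda_2=\sqrt{\delta}$ and spreading the remaining mass over $n$ equal eigenvalues, and letting $n\to\infty$, you get $\kappa_3\to0$, $\kappa_4\to 96\,\delta^2$, $\mathrm{Var}(\Gamma_2)\to 64\,\delta^3$. Thus $\mathrm{Var}(\Gamma_2)/\kappa_4^{2}\asymp\delta^{-1}\to\infty$, and, worse, $\sqrt{\mathrm{Var}(\Gamma_2)}\asymp\delta^{3/2}$ while $\max\{|\kappa_3|,\kappa_4\}\asymp\delta^{2}$. Since the order $\mathrm{Var}(\Gamma_2)\asymp\kappa_4^{3/2}$ is actually attained, the Cauchy--Schwarz step you propose can never give better than $C(|\kappa_3|+\kappa_4^{3/4})$ — an improvement on Proposition \ref{bd.4m-1}, but strictly weaker than the statement to be proved (in the symmetric example above your bound is of order $\delta^{3/2}$ against the required $\delta^2$). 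This is exactly the ``barrier of the third and fourth cumulants'' of \cite{bbnp}: the proof in \cite{np-15} does not conclude by applying Cauchy--Schwarz to $\Gamma_2-\EE(\Gamma_2)$; circumventing this obstruction (so that $\kappa_4$ enters through means of Gamma-type operators and Edgeworth-type expansions rather than through $L^2$ norms of $\Gamma_2$) is the main technical content of that paper. It is also telling that the paper's own Proposition \ref{stein.hd}, modeled on that argument, retains the non-cumulant terms $\EE(\|D(D_uF)\|^2_{\mathfrak{H}})$, $\EE(|D_u^2F|^2)$, $\EE(|D_u^3F|)$ instead of claiming a bound by $\max\{|\EE(F^3)|,\EE(F^4)-3\}$.

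The lower bound has a gap of the same nature: the $L^1$ Edgeworth expansion of the density, $p_F=\phi\,(1+\tfrac16\kappa_3H_3+\tfrac1{24}\kappa_4H_4+R)$ with $\|R\phi\|_{L^1}=o(\max\{|\kappa_3|,\kappa_4\})$, is not an off-the-shelf fact; it is essentially equivalent to what you are trying to prove, and a naive control of $R$ again produces terms of order $\kappa_4^{3/4}$, which would swamp the $\kappa_4H_4$ contribution you need to retain. In \cite{bbnp,np-15} the lower bound is obtained by testing against carefully chosen bounded functions and controlling the remainders through the same delicate contraction/cumulant inequalities, not by an $L^1$ expansion of the density. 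Your correct ingredients — the identity $\EE(\Gamma_2)=\tfrac12\EE(F^3)$, the reduction to small $\kappa_4$, the crude argument when $\max\{|\kappa_3|,\kappa_4\}$ is bounded below — do not compensate for the two pivotal claims, one false and one unjustified, so the proposal does not establish the proposition.
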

In the paper \cite{bbnp}, it is proved that that $|\EE(F^3)| \leq C \sqrt{\EE(F^4)-3}$, which  trivially indicates that  the bound in Proposition \ref{bd.4m-2} is better than (\ref{equ80}). Furthermore,    using an analytic characterization of cumulants and  Edgeworth-type expansions,  the authors of \cite{bbnp}  proved that,  for a normalized sequence $F_n$ which belongs to the $q$th Wiener chaos and converges to $Z$ in distribution as $n \to \infty$, the rate of convergence of the total variation distance is characterized by the third and fourth cumulants.  

The literature on the rate of convergence for normal approximations is focused on random variables on a fixed Wiener chaos.
The goal of this paper is to provide an answer to the following question:

\medskip
\noindent
{\bf Question:} To what extent  Propositions \ref{bd.4m-1} and \ref{bd.4m-2} can be generalized  to random variables that are not in a fixed chaos and
how this approach is applied in the context of the Breuer Major theorem?

\medskip
We cannot expect that, in this more general framework,  the convergence to a normal distribution is characterized by the third and fourth cumulants, and new functionals will appear. In the first part of the paper, we  consider random variables that can be written as  divergences, that is $F=\delta(u)$, where $\delta $ is the adjoint of the derivative operator in the Malliavin calculus. We will use Stein's method and Malliavin calculus   to provide three different bounds (see Propositions \ref{stein.hd0}, \ref{stein.hd} and  \ref{stein.hd2}) for  $d_{\rm TV} (F,Z)$.
If $F$ is in some fixed chaos, the bound in Proposition \ref{stein.hd0} should be the same as  that of Proposition \ref{bd.4m-1}  and the bound in Proposition \ref{stein.hd} should coincide with  that of Proposition \ref{bd.4m-2}.  Actually, the proof of Proposition \ref{stein.hd} has been inspired by the approach used to derive the upper bound in Proposition \ref{bd.4m-2}.

The second part of the paper is devoted to derive  upper bounds  for the total variation distance in the context of  the Breuer-Major theorem, applying  the estimates provided by
   Propositions \ref{stein.hd0}, \ref{stein.hd} and \ref{stein.hd2}. To do this, we need to represent $g(X_j)$ as a divergence $\delta(u)$. A basic ingredient for this representation is the  shift operator $T_1$ (see formula (\ref{t1a}) below) defined
using the expansion of $g$ into a series of Hermite polynomials. It turns out that the representation obtained through $T_1$ coincides with the classical representation $F= \delta (-DL^{-1} F)$, introduced in  \cite{nuaort}, that plays a fundamental role in    normal approximations by Stein's method and Malliavin calculus.   
The representation of $g(X_j)$ as a divergence (or an iterated divergence) allows us to apply the  integration by parts in the context of Malliavin calculus (or duality between the derivative and divergence operators), which leads to estimates of the expectation of products of  random variables of the form $g^{(k)}(X_j)$.  For this approach to work, we are going  to  assume that the function $g$ belongs to the Sobolev space $\D^{k,p}(\R,\gamma)$, for some $k$ and $p$, of functions that have $k$ weak derivatives with  moments of order $p$ with respect to $\gamma$. 
  
 In this way we have been able to obtain the following results  in the framework of Theorem \ref{bm},  for functions of  Hermite rank one or two.
 \begin{itemize}
 \item[(i)] For  functions $g$ of Hermite rank $d=1$,  assuming $g\in \D^{2,4} (\R,\gamma)$, we have      (see Theorem  \ref{main.d1} below)
 \[
       d_{\rm TV}(Y_n / \sigma_n, Z) \leq C n^{-\frac{1}{2}}.
       \]
       \item[(ii)]  For  functions $g$ of Hermite rank $d=2$,  assuming $g\in \D^{6,8} (\R,\gamma)$,  we have  (see Theorem  \ref{main.d12} below)
    \begin{equation}  \label{rate1}
			   d_{\rm TV}(Y_n / \sigma_n, Z) \leq    
			   C n^{-\frac{1}{2}} \left(\sum_{|k| \leq n} |\rho(k)|^{\frac{3}{2}}\right)^{2}.
			   \end{equation}
 \end{itemize}
    It is worth noticing that the upper bound  (\ref{rate1}) coincides with the optimal rate for the Hermite polynomial $g(x)= x^2-1$  obtained in \cite{bbnp}. Furthermore, in Theorem  \ref{main.d12}, rates worse than (\ref{rate1}) are established under less smoothness on the function $g$.
    
For functions  $g$ of Hermite rank $d\ge 3$ and assuming $g\in \D^{3d-2,4} (\R,\gamma)$, we have established in Theorem \ref{thm3.4} an upper bound for   the total variation  distance $d_{\rm TV}(Y_n / \sigma_n, Z)$   based on  Proposition \ref{stein.hd0},   
   which is a   slight modification  of the rate  derived for the Hermite polynomial $H_d$.  Due to the complexity of the computations, the application of Proposition \ref{stein.hd}  in the case $d\ge 3$ has  not been considered in this paper.

The paper is organized as follows. Section 2 contains some preliminaries on  Malliavin calculus and  Stein's method,  including the definition and properties of the shift operator $T_1$. 
In Section 3,  we derive the three basic estimates for the total variation distance between a divergence $\delta(u)$ and a $N(0,1)$ random variable.  Section 4 contains the main results of the paper. First we  thoroughly analyze the cases $d=1$ and $d=2$ and
establish bounds for the total variation distance in the framework of   the Breuer-Major  theorem and later we consider the case $d\ge 3$, applying
Proposition \ref{stein.hd0}.

As an application, in Section  5  we give the convergence rates for the fractional Gaussian case.  We also discuss some applications to the asymptotic behavior of power variations of the fractional Brownian motions and to the consistency of the estimator of the
Hurst parameter using power variations. The Appendix contains some  technical lemmas used in the proof of the main results and
some inequalities, obtained as an application of the rank-one Brascamp-Lieb inequality and  H\"older's  inequality, which play an important role in the proofs.
 
\section{Preliminaries}

In this section, we briefly recall some notions of  Malliavin calculus,  Stein's method and  the Brascamp-Lieb inequality. The shift operator $T_1$ mentioned above is also introduced here.

\subsection{Gaussian analysis}

 Let $\mathfrak{H}$ be a real separable Hilbert space. For any integer $m \geq 1$, we use $\mathfrak{H}^{\otimes m}$ and $\mathfrak{H}^{\odot m}$ to denote the $m$-th tensor product and the $m$-th symmetric tensor product of $\mathfrak{H}$, respectively. Let $\XX = \{\XX(\phi): \phi \in \mathfrak{H}\}$  denote an isonormal Gaussian process over the Hilbert space $\mathfrak{H}$. That means, $\XX$ is a centered Gaussian family of random variables, defined on some probability space $(\Omega, \mathcal{F}, P)$, with covariance $$\EE\left(\XX(\phi)\XX(\psi)\right) = \langle \phi, \psi \rangle_{\mathfrak{H}}, \qquad \phi, \psi \in \mathfrak{H}.$$
We assume that $\mathcal{F}$ is generated by $\XX$.

We denote by $\mathcal{H}_m$ the closed linear subspace of $L^2(\Omega)$ generated by the random variables $\{H_m(\XX(\varphi)): \varphi \in \mathfrak{H}, \|\varphi\|_{\mathfrak{H}}=1\}$, where $H_m$ is the $m$-th Hermite polynomial defined by 
\[
H_m(x)=(-1)^me^{\frac{x^2}{2}}\frac{d^m}{dx^m}e^{-\frac{x^2}{2}},\quad m \geq 1,
\]
and $H_0(x)=1$. The space $\mathcal{H}_m$ is called the Wiener chaos of order $m$. The $m$-th multiple integral of $\phi^{\otimes m} \in \mathfrak{H}^{\odot m}$ is defined by the identity 
$ I_m(\phi^{\otimes m}) = H_m(\XX(\phi))$ for any $\phi\in \mathfrak{H}$. The map $I_m$ provides a linear isometry between $\mathfrak{H}^{\odot m}$ (equipped with the norm $\sqrt{m!}\|\cdot\|_{\mathfrak{H}^{\otimes m}}$) and $\mathcal{H}_m$ (equipped with $L^2(\Omega)$ norm). By convention, $\mathcal{H}_0 = \mathbb{R}$ and $I_0(x)=x$.

The space $L^2(\Omega)$ can be decomposed into the infinite orthogonal sum of the spaces $\mathcal{H}_m$, which is known as the Wiener chaos expansion. Thus, any square integrable random variable $F \in L^2(\Omega)$ has the following expansion,
\[
  F = \sum_{m=0}^{\infty} I_m (f_m) ,
\]
where $f_0 = \mathbb{E}(F)$, and $f_m \in \mathfrak{H}^{\odot m}$ are uniquely determined by $F$. We denote by $J_m$ the orthogonal projection onto  the $m$-th Wiener chaos $\mathcal{H}_m$. This means that  $I_m (f_m) = J_m (F)$ for every $m \geq 0$.

\subsection{Malliavin calculus}   
In this subsection we present  some background of Malliavin calculus with respect to an   isonormal Gaussian process $\XX$. 
We refer the reader to \cite{np-book,nualartbook} for a detailed account on this topic. For a smooth and cylindrical random variable $F= f(\XX(\varphi_1), \dots , \XX(\varphi_n))$, with $\varphi_i \in \mathfrak{H}$ and $f \in C_b^{\infty}(\mathbb{R}^n)$ ($f$ and its partial derivatives are bounded), we define its Malliavin derivative as the $\mathfrak{H}$-valued random variable given by
\[
 DF = \sum_{i=1}^n \frac{\partial f}{\partial x_i} (\XX(\varphi_1), \dots, \XX(\varphi_n))\varphi_i.
\]
By iteration, one can define the $k$-th derivative $D^k F$  as an element of $L^2(\Omega; \mathfrak{H}^{\otimes k})$. For any natural number $k$ and any real number $ p \geq 1$, we define  the Sobolev space $\mathbb{D}^{k,p}$  as the closure of the space of smooth and cylindrical random variables with respect to the norm $\|\cdot\|_{k,p}$ defined by 
\[
 \|F\|^p_{k,p} = \mathbb{E}(|F|^p) + \sum_{i=1}^k \mathbb{E}(\|D^i F\|^p_{\mathfrak{H}^{\otimes i}}).
\]
The divergence operator $\delta$ is defined as the adjoint of the derivative operator $D$ in the following manner.  An element $u \in L^2(\Omega; \mathfrak{H})$ belongs to the domain of $\delta$, denoted by $\dom\, \delta$, if there is a constant $c_u$ depending on $u$ such that 
\[
|\mathbb{E} (\langle DF, u \rangle_{\mathfrak{H}})| \leq c_u \|F\|_{L^2(\Omega)}
\] for any $F \in \mathbb{D}^{1,2}$.  If $u \in \dom \,\delta$, then the random variable $\delta(u)$ is defined by the duality relationship 
\begin{equation} \label{dua}
\mathbb{E}(F\delta(u)) = \mathbb{E} (\langle DF, u \rangle_{\mathfrak{H}}) \, ,
\end{equation}
which holds for any $F \in \mathbb{D}^{1,2}$.  
In a similar way we can introduce the iterated divergence operator $\delta^k$ for each integer $k\ge 2$, defined by the duality relationship 
\begin{equation} \label{dua2}
\mathbb{E}(F\delta^k(u)) = \mathbb{E}  \left(\langle D^kF, u \rangle_{\mathfrak{H}^{\otimes k}} \right) \, ,
\end{equation}
for any $F \in \mathbb{D}^{k,2}$, where $u\in  {\rm Dom}\, \delta^k \subset L^2(\Omega; \mathfrak{H}^{\otimes k})$.

The Ornstein-Uhlenbeck semigroup $(P_t)_{t \geq 0}$ is the  semigroup of operators on $L^2(\Omega)$ defined by
\[
P_t F = \sum_{m=0}^\infty e^{-mt} I_m(f_m),
\]
if $F$ admits the Wiener chaos expansion $F = \sum_{m=0}^\infty I_m(f_m)$. Denote by $L = \frac{d}{dt}|_{t=0} P_t$ the infinitesimal generator of $(P_t)_{t \geq 0}$ in $L^2(\Omega)$. Then we have $LF=-\sum_{m=1}^\infty m J_m(F)$ for any $F \in {\rm Dom}\, L=\mathbb{D}^{2,2} $. We define the pseudo-inverse of $L$ as $L^{-1} F = -\sum_{m=1}^\infty \frac{1}{m} J_m F$.   We recall the following formula for any centered and square integrable random variable $F$,
\begin{equation} \label{equ5}
L^{-1} F = -\int_0 ^\infty P_t F dt.
\end{equation}
The basic  operators $D$, $\delta$ and $L$ satisfy the relation $ LF =-\delta DF $, for any random variable $F\in\mathbb{D}^{2,2}$.
As a consequence, any centered random variable $F\in L^2(\Omega)$ can be  expressed as a divergence:
\begin{equation}  \label{deltad}
F= \delta (-DL^{-1}F).
\end{equation}
This representation has intensively been used in normal approximations (see \cite{nuaort,nunugio}).

 We denote by $\gamma$ the standard Gaussian measure on $\R$.  The Hermite polynomials $\{H_m(x), m\ge 0\}$  form a complete orthonormal system in $L^2(\R,\gamma)$ and  any function $g\in L^2(\R,\gamma)$ admits an  orthogonal expansion  of the form
\begin{equation}\label{hexp1}
g(x)= \sum_{m=0} ^\infty c_m  H_m(x).
\end{equation}
 If $g\in L^2(\R,\gamma)$ has  the expansion  (\ref{hexp1}), we define the operator $T_1$ by
\begin{equation} \label{t1a}
T_1(g)(x) = \sum_{m=1}^\infty c_m H_{m-1}(x) \,.
\end{equation}
To simplify the notation we will write $T_1(g) =g_1$.

Suppose that $F$ is a random variable in the first Wiener chaos of $\XX$ of the form $F= I_1(\varphi)$, where $\varphi \in \HH$ has norm one. In view of the relation between Hermite polynomials and multiple stochastic integrals, it follows that for any $g\in L^2(\R, \gamma)$ of the form (\ref{hexp1}), the random variable $g(F)$ admits the Wiener chaos expansion
\begin{equation} \label{expg}
g(F)= \sum_{m=0} ^\infty c_m I_m(\varphi^{\otimes m}).
\end{equation}
Next we establish the connection between the shift operator    $T_1$ defined in  (\ref{t1a}) and the representation of a centered and square integrable random variable  as divergence given in (\ref{deltad}).

\begin{lemma} \label{lem2.1}
Let $F$ be a random variable in the first Wiener chaos of $\XX$ of the form $F= I_1(\varphi)$, where $\| \varphi \|_{\HH} =1$.  Suppose that $g\in L^2(\R, \gamma)$  is centered. Then
\[
g_1(F) \varphi = -DL^{-1} g(F).
\]
As a consequence, $g(F) = \delta(  g_1(F) \varphi)$.
\end{lemma}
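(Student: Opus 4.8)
The plan is to compute $-DL^{-1}g(F)$ directly from the Wiener chaos expansion (\ref{expg}) of $g(F)$ and to identify the result with $g_1(F)\varphi$ term by term, then appeal to (\ref{deltad}) for the last assertion.

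First I would use that $g$ is centered, so $c_0=\EE[g(F)]=0$, and hence by (\ref{expg})
\[
g(F)=\sum_{m\ge 1} c_m I_m(\varphi^{\otimes m}),
\]
the series converging in $L^2(\Omega)$ because $\sum_{m\ge 1} m!\, c_m^2=\|g\|_{L^2(\R,\gamma)}^2<\infty$. Applying the definition of $L^{-1}$ chaos by chaos gives $L^{-1}g(F)=-\sum_{m\ge 1}\frac1m c_m I_m(\varphi^{\otimes m})$. To differentiate I would invoke the standard formula for the Malliavin derivative of a multiple integral, $D I_m(\varphi^{\otimes m})=m\, I_{m-1}(\varphi^{\otimes (m-1)})\varphi$ (valid since $\varphi^{\otimes m}$ is symmetric), together with the elementary bound $\sum_{m\ge 1}(m-1)!\,c_m^2=\|g_1\|_{L^2(\R,\gamma)}^2\le \|g\|_{L^2(\R,\gamma)}^2<\infty$, which guarantees simultaneously that $L^{-1}g(F)\in\D^{1,2}$ and that the differentiated series converges in $L^2(\Omega;\HH)$. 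This yields
\[
-DL^{-1}g(F)=\sum_{m\ge 1}\tfrac1m c_m\cdot m\, I_{m-1}(\varphi^{\otimes (m-1)})\varphi=\Big(\sum_{m\ge 1} c_m\, I_{m-1}(\varphi^{\otimes (m-1)})\Big)\varphi .
\]

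Next I would use the defining relation between multiple integrals and Hermite polynomials, $I_{m-1}(\varphi^{\otimes(m-1)})=H_{m-1}(\XX(\varphi))=H_{m-1}(F)$, which holds precisely because $\|\varphi\|_{\HH}=1$; here is where the norm-one hypothesis enters, collapsing each $I_{m-1}(\varphi^{\otimes(m-1)})$ to a single Hermite polynomial in $F$. The bracket then equals $\sum_{m\ge 1} c_m H_{m-1}(F)=g_1(F)$ by the definition (\ref{t1a}) of $T_1$, which proves $g_1(F)\varphi=-DL^{-1}g(F)$. The consequence is then immediate: since $g(F)$ is centered and square integrable, $-DL^{-1}g(F)\in\dom\,\delta$ and (\ref{deltad}) gives $g(F)=\delta(-DL^{-1}g(F))=\delta(g_1(F)\varphi)$.

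I do not expect a genuine obstacle here; the only points requiring care are the bookkeeping that the formal term-by-term applications of $L^{-1}$ and $D$ are legitimate (which reduces to the convergence bounds above) and getting the normalization right in the derivative formula for multiple integrals. The conceptual content is simply that $L^{-1}$ divides the $m$-th chaos component by $m$, $D$ multiplies it by $m$ while lowering the order by one, and these two effects cancel to reproduce exactly the index shift built into $T_1$.
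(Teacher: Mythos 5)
Your proposal is correct and follows essentially the same route as the paper: apply $L^{-1}$ chaos by chaos to divide the $m$-th component by $m$, differentiate to multiply by $m$ while lowering the order, and identify the result with $g_1(F)\varphi$ before invoking (\ref{deltad}); the paper simply carries out the differentiation directly on the Hermite series via $H_m'=mH_{m-1}$, which is equivalent to your use of $DI_m(\varphi^{\otimes m})=m\,I_{m-1}(\varphi^{\otimes(m-1)})\varphi$ together with $I_{m-1}(\varphi^{\otimes(m-1)})=H_{m-1}(F)$. Your added convergence bookkeeping is fine but not a substantive difference.
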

\begin{proof}
Using the Wiener  chaos expansion (\ref{expg}), we can write
\[
L^{-1} g(F)=- \sum_{m=1} ^\infty  \frac { c_m} m H_m(F),
\]
which implies, taking into account that $H_m' =mH_{m-1}$, that
\[
-DL^{-1} g(F)= \sum_{m=1} ^\infty  c_mH_{m-1}(F) \varphi= g_1(F)\varphi.
\]
Property $g(F) = \delta(  g_1(F) \varphi)$ is a consequence of (\ref{deltad}).
This completes the proof.
\end{proof}

For any $k\ge 2$, we can define the iterated operator $T_k= T_1\circ \stackrel{k} { \cdots} \circ T_1$ by 
\begin{equation} \label{t1}
T_k(g)(x) = \sum_{m=k}^\infty c_m H_{m-k}(x) \,.
\end{equation}
We will write $T_k(g)= g_k$ and we have the representation 
   \begin{equation}\label{g.intrep}
   	 g(F) = \delta^k( g_k(X) \varphi^{\otimes k}) \,,
   \end{equation}
   provided  $F$ is a random variable in the first Wiener chaos of $\XX$ of the form $F= I_1(\varphi)$, with  $\| \varphi \|_{\HH} =1$, and $g$ has Hermite rank $k$.

\begin{lemma}
	Let $F$ be a random variable in the first Wiener chaos of $\XX$ of the form $F= I_1(\varphi)$, with  $\| \varphi \|_{\HH} =1$.
	Suppose that $g\in L^2(\R, \gamma)$  is centered. Then for any
 $p \geq 1$,
    \begin{equation}
    	\| g_1(F) \|_{L^p(\Omega)} \leq \sqrt{ \pi}\|g(F)\|_{L^p(\Omega)} \,.
    \end{equation}
\end{lemma}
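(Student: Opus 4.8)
The plan is to reduce everything to a one‑dimensional $L^p(\R,\gamma)$‑inequality for the operator $T_1$ and then to prove it via an explicit integral representation. Since $F=I_1(\varphi)$ with $\|\varphi\|_{\HH}=1$ is a standard Gaussian random variable, $g(F)$ and $g_1(F)=(T_1g)(F)$ have, respectively, the laws of $g$ and $T_1g$ under $\gamma$; hence it suffices to prove $\|T_1g\|_{L^p(\R,\gamma)}\le\sqrt\pi\,\|g\|_{L^p(\R,\gamma)}$, and we may assume $g\in L^p(\R,\gamma)$. The first step is to establish, for $Z\sim N(0,1)$, the representation
\[
(T_1g)(x)=\int_0^1\frac{1}{\sqrt{1-\rho^2}}\,\EE\!\left[g\big(\rho x+\sqrt{1-\rho^2}\,Z\big)\,Z\right]d\rho .
\]
It is enough to check this on each Hermite polynomial: from the addition formula $H_m(ax+by)=\sum_{j=0}^m\binom mj a^jb^{m-j}H_j(x)H_{m-j}(y)$ (valid when $a^2+b^2=1$) and the orthogonality relation $\EE[H_\ell(Z)Z]=\mathbf 1_{\{\ell=1\}}$ one gets $\EE[H_m(\rho x+\sqrt{1-\rho^2}Z)Z]=m\,\rho^{m-1}\sqrt{1-\rho^2}\,H_{m-1}(x)$, whence $\int_0^1(1-\rho^2)^{-1/2}\EE[H_m(\rho x+\sqrt{1-\rho^2}Z)Z]\,d\rho=\int_0^1 m\rho^{m-1}\,d\rho\,H_{m-1}(x)=H_{m-1}(x)$. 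Summing against the coefficients $c_m$ of $g$ is legitimate because, for fixed $x$ and $\rho$, $f\mapsto\EE[f(\rho x+\sqrt{1-\rho^2}Z)Z]$ is a bounded linear functional on $L^2(\R,\gamma)$, and Fubini allows the interchange with $\int_0^1 d\rho$. (Equivalently, this is the identity $g_1(F)\varphi=-DL^{-1}g(F)$ of Lemma \ref{lem2.1} combined with (\ref{equ5}) and a Gaussian integration by parts in the transition kernel.)

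Next I would deduce the $L^p$ bound. Take $X\sim N(0,1)$ independent of $Z$, so that $W_\rho:=\rho X+\sqrt{1-\rho^2}Z$ is $N(0,1)$ for every $\rho\in[0,1]$ and the representation reads $(T_1g)(X)=\int_0^1(1-\rho^2)^{-1/2}\,\EE[g(W_\rho)Z\mid X]\,d\rho$. Applying Minkowski's integral inequality in the variable $\rho$ — the measure $(1-\rho^2)^{-1/2}\mathbf 1_{(0,1)}\,d\rho$ having total mass $\pi/2$ — reduces the problem to bounding $\big\|\EE[g(W_\rho)Z\mid X]\big\|_{L^p_X(\gamma)}$ by a fixed multiple of $\|g\|_{L^p(\gamma)}$, uniformly in $\rho$. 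For this one uses a conditional Hölder inequality in $Z$ (Cauchy–Schwarz when $p\ge2$): since $Z$ is independent of $X$ one has $\EE[|Z|^{q}\mid X]=\EE|Z|^{q}$, and since $W_\rho\sim N(0,1)$ under the joint law one has $\EE[|g(W_\rho)|^p]=\|g\|^p_{L^p(\gamma)}$; this yields a bound of the form $\|T_1g\|_{L^p(\gamma)}\le \tfrac\pi2\,\kappa_p\,\|g\|_{L^p(\gamma)}$, with $\kappa_p$ an $L^{q}$‑moment of $|Z|$ ($\tfrac1p+\tfrac1q=1$).

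The hard part is extracting the clean universal constant $\sqrt\pi$: the weight $Z$ must be separated from $g(W_\rho)$ precisely so that the argument of $g$ stays standard Gaussian after integrating in $X$, and a wasteful split introduces an unwanted $p$‑dependent factor. The delicate point is therefore to balance $\int_0^1(1-\rho^2)^{-1/2}d\rho=\pi/2$ against the moment $\EE|Z|^{q}$. For $p\ge2$ Cauchy–Schwarz suffices and already gives the constant $\pi/2<\sqrt\pi$; for $1\le p<2$ one chooses the Hölder exponents carefully and, if needed, interpolates with the endpoint estimate $\|T_1g\|_\infty\le\sqrt{\pi/2}\,\|g\|_\infty$. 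That endpoint bound follows from the explicit form $(T_1g)(x)=\sqrt{2\pi}\,e^{x^2/2}\!\int_x^\infty g\,d\gamma=-\sqrt{2\pi}\,e^{x^2/2}\!\int_{-\infty}^x g\,d\gamma$ (obtained from $H_{m-1}(x)\varphi_\gamma(x)=\int_x^\infty H_m(t)\varphi_\gamma(t)\,dt$, where $\varphi_\gamma$ is the standard Gaussian density; equivalently $T_1g$ solves the Stein equation $(T_1g)'(x)-x(T_1g)(x)=-g(x)$), together with the Mills‑ratio inequality $\sup_x e^{x^2/2}\min(\Phi(x),1-\Phi(x))=\tfrac12$. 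I expect this balancing of constants — not the representation nor the Minkowski/Hölder machinery — to be the only genuinely delicate step.
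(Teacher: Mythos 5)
Your plan for $p\ge 2$ is sound, and it is essentially the paper's own proof in different coordinates: the representation $(T_1g)(x)=\int_0^1(1-\rho^2)^{-1/2}\,\EE[g(\rho x+\sqrt{1-\rho^2}Z)Z]\,d\rho$ is exactly $-DL^{-1}g(F)=\int_0^\infty DP_t\,g(F)\,dt$ (Lemma \ref{lem2.1} together with (\ref{equ5})) after writing $DP_t$ via Mehler's formula and substituting $\rho=e^{-t}$, and your Minkowski-plus-conditional-Cauchy--Schwarz step replaces the paper's citation of the $L^p$ smoothing estimates for $DP_t$; for $p\ge2$ this is complete and even yields the better constant $\pi/2$. (The Fubini/termwise-summation justification deserves a little more care, since your bound on the functional $f\mapsto\EE[f(\rho x+\sqrt{1-\rho^2}Z)Z]$ degenerates as $\rho\to1$, but that is a repairable detail.)

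The genuine gap is the range $1\le p<2$, which you defer to ``a careful choice of H\"older exponents'' plus interpolation with the $L^\infty$ endpoint; neither device can close it. Conditional H\"older gives the constant $\tfrac{\pi}{2}(\EE|Z|^{q})^{1/q}$ with $q=p/(p-1)$, which already exceeds $\sqrt{\pi}$ at $p=3/2$ and blows up as $p\downarrow1$, while interpolating an $L^{p_0}$ bound ($p_0\ge2$) with an $L^\infty$ bound only yields exponents in $[p_0,\infty]$, never exponents below $2$. Worse, no balancing of constants can succeed near $p=1$: take $g=\gamma([a,a+1])^{-1}\mathbf{1}_{[a,a+1]}-1$, which is centered and bounded; writing $\phi(x)=(2\pi)^{-1/2}e^{-x^2/2}$ and $\Phi$ for its distribution function, your own formula $(T_1g)(x)\,\phi(x)=\int_x^\infty g\,d\gamma$ gives $(T_1g)(x)=\Phi(x)/\phi(x)$ for $x\le a$, hence $\|T_1g\|_{L^p(\gamma)}\ge\|T_1g\|_{L^1(\gamma)}\ge\int_0^a\Phi(x)\,dx\ge a/2$, whereas $\|g\|_{L^p(\gamma)}\le 1+\gamma([a,a+1])^{-(p-1)/p}\to 2$ as $p\downarrow1$ for fixed $a$. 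Letting $a$ be large shows the inequality with constant $\sqrt{\pi}$ (indeed with any fixed constant) fails at $p=1$ and for $p$ sufficiently close to $1$, so the ``only delicate step'' you postpone is not delicate but impossible as stated. For $1<p<2$ the inequality does hold, but only with a $p$-dependent constant (your $\tfrac{\pi}{2}\|Z\|_{L^q}$, or the constants in the smoothing estimates the paper cites), and that is what should be proved and is all that is used later in the paper.
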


\begin{proof}
Observe that, using Lemma  \ref{lem2.1}, we can write
     \[
     \| g_1(F)\|_{L^p(\Omega)} = \|-DL^{-1} g(F)\|_{L^p(\Omega; \mathfrak{H})} \,.
     \]
Then, using (\ref{equ5}), Minkowski's inequality and Propositions 3.2.4 and 3.2.5 of \cite{CBMS}, we can write
     \begin{eqnarray*}
		 \|-DL^{-1} g(F)\|_{L^p(\Omega; \mathfrak{H})} & \leq & \left\| \int_0^\infty D P_t g(F) dt \right\|_{L^p(\Omega; \mathfrak{H})} \\
		  & \leq & \int_0^\infty \| D P_t g(F) \|_{L^p(\Omega; \mathfrak{H})} dt \\
		  & \leq & \int_0^\infty t^{-\frac{1}{2}} e^{-t} \| g(F) \|_{L^p(\Omega)} dt \,,
	 \end{eqnarray*}
	 which allows us to complete the proof.
	 \end{proof}
	 By iteration, we  obtain
	     \begin{equation}
    	\| g_k(F) \|_{L^p(\Omega)} \leq  \pi^{\frac k2} \|g(F)\|_{L^p(\Omega)} \,,
    \end{equation}
    for any $k\ge 2$, provided $g$ has Hermite rank $k$ and  $F= I_1(\varphi)$, with  $\| \varphi \|_{\HH} =1$. If $g$   has Hermite rank strictly less than $k$, we can write
    \[
T_kg(x)= T_k \widetilde{g}(x),
\]
where $\widetilde{g}(x)= \sum_{m=k} ^\infty c_m H_m(x)$. Then,
\[
\| T_k g(F)\|_{ L^p(\Omega)} \le  \pi^{\frac k2} \| g(F) \|_{ L^p(\Omega)}  +  \pi^{\frac k2} \left \| \sum_{m=0 }^{k-1} c_m H_m(F)  \right\|_{ L^p(\Omega)}   \le  \pi^{\frac k2} \|g(F) \|_{ L^p(\Omega) } + C_{k,p}.
\]
 
Consider $\mathfrak{H}=\R$,  the probability space $(\Omega,  \mathcal{F}, P)= (\R, \mathcal{B}(\R), \gamma)$  and the isonornal Gaussian process $\XX(h)=h$.   For any $k\ge0$ and $p\ge 1$, denote by $\mathbb{D}^{k,p}(\R,\gamma)$ the  corresponding Sobolev spaces of functions.    Notice that if $g\in \mathbb{D}^{k,p}(\R,\gamma)$,  and $F=I_1(\varphi)$ is an element in the first Wiener chaos of a general isonormal Gaussian process $\XX$, then $g(F)\in \mathbb{D}^{k,p}$.

 
 The next lemma  provides a regularizing property of the operator $T_k$.
 \begin{lemma}  \label{lem2.4}
 Suppose that $g\in \mathbb{D}^{j,p}(\R,\gamma)$ for some $j\ge 0$ and $p>1$.  Then  $T_kg \in \mathbb{D}^{j+k}(\R,\gamma)$ for all $k\ge 1$.
 \end{lemma}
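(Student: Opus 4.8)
The plan is to reduce to the single‑step statement that $T_1$ maps $\mathbb{D}^{j,p}(\R,\gamma)$ into $\mathbb{D}^{j+1,p}(\R,\gamma)$ for every $j\ge0$ and $p>1$, and then iterate: since $T_k=T_1\circ\stackrel{k}{\cdots}\circ T_1$, an induction on $k$ with $T_kg=T_1(T_{k-1}g)$ and induction hypothesis $T_{k-1}g\in\mathbb{D}^{j+k-1,p}(\R,\gamma)$ gives the general claim. Throughout I would use the identity $T_1h=-DL^{-1}(h-\EE h)$ coming from the proof of Lemma \ref{lem2.1}; it makes sense for every $h\in L^p(\R,\gamma)$ with $p>1$, since for centered $F$ one has $\|P_tF\|_{L^p(\gamma)}\le e^{-t}\|F\|_{L^p(\gamma)}$, so $L^{-1}(h-\EE h)=-\int_0^\infty P_t(h-\EE h)\,dt$ and $T_1h=\int_0^\infty DP_t(h-\EE h)\,dt$ converge absolutely in $L^p$ (using the smoothing estimate (c) below), and it agrees with (\ref{t1a}) on $L^2(\R,\gamma)$.

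For the single‑step statement, fix $g\in\mathbb{D}^{j,p}(\R,\gamma)$ and set $F=g-\EE g\in\mathbb{D}^{j,p}(\R,\gamma)$; then $D^iT_1g=-D^{i+1}L^{-1}F$ for $0\le i\le j+1$, so everything reduces to bounding $D^{i+1}L^{-1}F$ in $L^p(\gamma)$. I would use: (a) the commutation relation $D(L-cI)=(L-(c+1)I)D$, hence $D^{\ell}L^{-1}=(L-\ell I)^{-1}D^{\ell}$, which lets one move a derivative past $L^{-1}$; (b) the resolvent identity $(L-cI)^{-1}=-\int_0^\infty e^{-ct}P_t\,dt$ for $c>0$, so that $(L-cI)^{-1}$ is bounded on $L^p(\gamma)$ with norm $\le 1/c$; (c) the estimate $\|DP_tF\|_{L^p(\gamma)}\le C_p(1-e^{-2t})^{-1/2}e^{-t}\|F\|_{L^p(\gamma)}$ from Propositions 3.2.4 and 3.2.5 of \cite{CBMS}; and (d) Meyer's inequalities on $L^p(\R,\gamma)$ for $1<p<\infty$, in the form $\|D^2\psi\|_{L^p(\gamma)}\le C_p(\|\psi\|_{L^p(\gamma)}+\|L\psi\|_{L^p(\gamma)})$.

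The estimate then splits into three regimes. If $i\le j-1$, then $D^{i+1}F\in L^p(\gamma)$, and by (a)--(b), $\|D^{i+1}L^{-1}F\|_{L^p(\gamma)}=\|(L-(i+1)I)^{-1}D^{i+1}F\|_{L^p(\gamma)}\le (i+1)^{-1}\|D^{i+1}F\|_{L^p(\gamma)}$. If $i=j$, push exactly $j$ derivatives inside, $D^{j+1}L^{-1}F=D\bigl[(L-jI)^{-1}D^jF\bigr]=-\int_0^\infty e^{-jt}DP_t(D^jF)\,dt$ (reading $(L-jI)^{-1}$ as $L^{-1}$ on the centered function when $j=0$), and bound it by (c) using $\int_0^\infty e^{-(j+1)t}(1-e^{-2t})^{-1/2}\,dt<\infty$. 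If $i=j+1$, set $h=D^jF\in L^p(\gamma)$ and $\psi=(L-jI)^{-1}h$; then $D^{j+2}L^{-1}F=D^2\psi$, and since $L\psi=h+j\psi$ forces $\|L\psi\|_{L^p(\gamma)}\le 2\|h\|_{L^p(\gamma)}$, ingredient (d) gives $\|D^2\psi\|_{L^p(\gamma)}\le C_p(\|\psi\|_{L^p(\gamma)}+\|L\psi\|_{L^p(\gamma)})\le C_p'\|D^jF\|_{L^p(\gamma)}<\infty$. Hence $T_1g\in\mathbb{D}^{j+1,p}(\R,\gamma)$, and iterating yields $T_kg\in\mathbb{D}^{j+k,p}(\R,\gamma)$.

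The main obstacle is the top‑order regime $i=j+1$: there the elementary semigroup bound only gives $\|D^2P_tF\|_{L^p(\gamma)}\le C_p(1-e^{-2t})^{-1}e^{-2t}\|F\|_{L^p(\gamma)}$, whose singularity at $t=0$ is not integrable, so one genuinely has to invoke Meyer's inequalities (equivalently, the $L^p(\gamma)$‑boundedness of the second‑order Gaussian Riesz transform $D^2(-L)^{-1}$) to recover the last derivative. The remaining points --- justifying $T_1h=-DL^{-1}(h-\EE h)$ for every $p>1$, in particular when $p<2$ so that $\mathbb{D}^{j,p}\not\subset L^2(\R,\gamma)$, and the commutation bookkeeping in (a) --- are routine.
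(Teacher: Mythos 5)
Your proof is correct and rests on essentially the same mechanism as the paper's: the identity $T_1=-DL^{-1}$ (hence $T_k=(-DL^{-1})^k$) combined with Meyer's inequalities to gain one derivative in $L^p(\R,\gamma)$ per application. The paper simply reduces to Hermite rank $k$ and invokes the $L^p$-equivalence of $D$ and $(-L)^{1/2}$, whereas you carry out the same idea explicitly (commutation of $D$ with $L^{-1}$, resolvent and $DP_t$ smoothing bounds, Meyer's inequality only at top order) and in addition take care of the case $p<2$, where the Hermite-expansion definition of $T_1$ requires the extension you describe.
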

 
\begin{proof}
We can assume that $g$ has Hermite rank $k$, otherwise, we just subtract the first $k$ terms in its expansion. 
Then, the result is an immediate consequence of the fact that $T_k=(-DL^{-1})^k$ and  the equivalence in $L^p(\R,\gamma)$ of the operators $D$ and $(-L)^{1/2}$, which follows from Meyer's inequalities (see, for instance, \cite{nualartbook}). 
\end{proof}

Notice that  $T_1$ and the derivative operator do not commute. We will write $(g_1)' = g'_1$, which is different from
$T_1(g')$. Indeed,  for any $g\in L^2(\R, \gamma)$, we have
\[
g'_1 = T_1(g') - g_2,
\]
because if $g$ has the expansion (\ref{hexp1}), we obtain
\[
g'_1(x) = \sum_{m=2} ^\infty c_m (m-1) H_{m-2} (x),
\]
\[
 T_1(g')(x) = \sum_{m=2} ^\infty c_m m H_{m-2} (x)
\]
and
\[
g_2(x) = \sum_{m=2} ^\infty c_m H_{m-2} (x).
\]
More generally we can show that for any $k,l \ge 1$,
	\[
	 g_k^{(l)} = \sum_{i=0}^l \binom{l}{i}\alpha_{k,i} T_{k+i}(g^{(l-i)}),
	\]
where $\alpha_{k,i}= (-1)^i k(k+1)\cdots(k+i-1)$, with  the convention $\alpha_{k,i}=1$ if $i=0$.

\subsection{Brascamp-Lieb inequality}

In this subsection we recall  a version of the rank-one Brascamp-Lieb inequality  that will be  intensively used  through   this paper (see  \cite{barthe,bcct,bl} and the references therein). This inequality constitutes a generalization of both H\"older's and Young's convolution inequalities. 

\begin{prop} \label{prop2.10} Let $2\le M\le N$ be fixed integers. 
	Consider  nonnegative measurable functions $f_j: \R \rightarrow \R_+$, $1\le j\le N$, and fix nonzero vectors ${\bf v_j} \in \R^M$.
	Fix  positive numbers $p_j$,  $1\le j\le N$, verifying the following conditions:
	\begin{itemize}
	\item[(i)]  $\sum_{j=1}^N p_j = M$,
	\item[(ii)]  For any subset $I\subset \{1,\dots, M\}$, we have $\sum_{j\in I} p_j \le {\rm dim} \left( {\rm Span} \{ {\bf v}_j, j\in I \} \right)$.
	\end{itemize}
	Then, there exists a  finite constant $C$, depending on $N, M$ and the  $p_j$'s such that
	\begin{equation}  \label{BL}
	\sum_{ {\bf k} \in \mathbb{Z}^M} \prod_{j=1} ^N  f_j({\bf k} \cdot {\bf v} _j)  \le C  \prod_{j=1} ^N  \left( \sum_{ k \in \mathbb{Z}}
	f_j (k)^{ 1/p_j} \right) ^{p_j}.
	\end{equation}
\end{prop}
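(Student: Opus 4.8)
\emph{Proof strategy.} I would deduce the discrete inequality \eqref{BL} from the continuous rank-one Brascamp--Lieb inequality (see \cite{barthe,bcct,bl}): under exactly hypotheses (i)--(ii) there is a finite constant $C$ such that
\begin{equation*}
\int_{\R^M}\prod_{j=1}^N h_j({\bf x}\cdot{\bf v}_j)^{p_j}\,d{\bf x}\;\le\;C\prod_{j=1}^N\Big(\int_{\R} h_j(t)\,dt\Big)^{p_j}
\end{equation*}
for all nonnegative measurable $h_j\colon\R\to\R_+$; here (i) is the scaling identity and (ii) (for all index sets $I\subseteq\{1,\dots,N\}$) is the Bennett--Carbery--Christ--Tao finiteness condition for the rank-one maps ${\bf x}\mapsto{\bf x}\cdot{\bf v}_j$. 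Writing $h_j=f_j^{1/p_j}$, the target \eqref{BL} reads $\sum_{{\bf k}\in\Z^M}\prod_{j}h_j({\bf k}\cdot{\bf v}_j)^{p_j}\le C\prod_{j}\big(\sum_{k\in\Z}h_j(k)\big)^{p_j}$. I would work under the assumption --- met in every application of this proposition in the paper --- that the ${\bf v}_j$ have integer entries, so that every argument ${\bf k}\cdot{\bf v}_j$ appearing on the left is an integer; consequently both sides depend only on the restrictions $h_j|_{\Z}$, and by monotone convergence it suffices to treat each $h_j$ supported on a finite subset of $\Z$.

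The bridge between the sum and the integral is a local-supremum majorant. Put $R_j:=\|{\bf v}_j\|_1$ and let $H_j\colon\R\to\R_+$, $H_j(t)=\max\{h_j(m):m\in\Z,\ |m-t|\le R_j+1\}$, a bounded function with compact support. If ${\bf x}={\bf k}+{\bf y}$ with ${\bf k}\in\Z^M$ and ${\bf y}\in[0,1)^M$, then $|{\bf x}\cdot{\bf v}_j-{\bf k}\cdot{\bf v}_j|=|{\bf y}\cdot{\bf v}_j|\le R_j$, and since ${\bf k}\cdot{\bf v}_j\in\Z$ this forces $h_j({\bf k}\cdot{\bf v}_j)\le H_j({\bf x}\cdot{\bf v}_j)$. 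Integrating the constant $\prod_j h_j({\bf k}\cdot{\bf v}_j)^{p_j}$ over ${\bf k}+[0,1)^M$ and summing over ${\bf k}\in\Z^M$ gives
\begin{equation*}
\sum_{{\bf k}\in\Z^M}\prod_{j=1}^N h_j({\bf k}\cdot{\bf v}_j)^{p_j}\;\le\;\int_{\R^M}\prod_{j=1}^N H_j({\bf x}\cdot{\bf v}_j)^{p_j}\,d{\bf x}\;\le\;C\prod_{j=1}^N\Big(\int_{\R} H_j(t)\,dt\Big)^{p_j},
\end{equation*}
the last step being the continuous inequality applied to $H_1,\dots,H_N$. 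To finish I would bound each $\int_{\R}H_j$: for $t\in[m,m+1)$ one has $H_j(t)\le\max\{h_j(l):|l-m|\le R_j+2\}$, whence $\int_{\R}H_j\le C_j\sum_{k\in\Z}h_j(k)$ with $C_j$ depending only on $R_j$; substituting this and recalling $h_j=f_j^{1/p_j}$ produces \eqref{BL}, and one last monotone-convergence passage removes the finite-support reduction.

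The only genuinely deep ingredient is the continuous rank-one Brascamp--Lieb inequality, which I would quote rather than reprove; the rest is the elementary discretization above. The step I expect to need the most care is confirming that (i)--(ii) are precisely the hypotheses making the continuous constant finite: for rank-one maps this finiteness condition says $\dim V\le\sum_{j:\,{\bf v}_j\notin V^{\perp}}p_j$ for every subspace $V\subseteq\R^M$, and one sees --- taking $V$ to be the orthogonal complement of $\mathrm{Span}\{{\bf v}_j:j\notin I\}$ and invoking (i) --- that this is equivalent to (ii). I would also be careful not to drop the integrality of the ${\bf v}_j$: it is exactly what makes ${\bf k}\mapsto{\bf k}\cdot{\bf v}_j$ carry $\Z^M$ into $\Z$, and without it the inequality can fail (take $M=N=2$, ${\bf v}_1=(\tfrac12,0)$, ${\bf v}_2=(0,1)$, $p_1=p_2=1$, and $f_1$ a point mass at $\tfrac12$, so the left side is positive while the right side vanishes).
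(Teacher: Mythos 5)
Your argument is sound, but note that the paper never actually proves Proposition \ref{prop2.10}: it is \emph{recalled} from the literature on the rank-one Brascamp--Lieb inequality \cite{barthe,bcct,bl}, where the result appears in continuous (integral) form, so your discretization is a genuine supplement rather than an alternative to an existing proof. The transference itself is correct: the local-supremum majorants $H_j$ dominate the lattice values because $|{\bf y}\cdot{\bf v}_j|\le\|{\bf v}_j\|_1$ for ${\bf y}\in[0,1)^M$ and ${\bf k}\cdot{\bf v}_j\in\Z$; the continuous inequality then applies to the $H_j$, and $\int_\R H_j(t)\,dt\le C_j\sum_{k\in\Z}h_j(k)$ with $C_j$ controlled by $\|{\bf v}_j\|_1$. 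Your identification of (i)--(ii) with the Bennett--Carbery--Christ--Tao finiteness condition for the rank-one maps is also right: with $V=\left(\mathrm{Span}\{{\bf v}_j:j\notin I\}\right)^{\perp}$ the subspace condition yields (ii) for the complementary index set, which suffices since $I$ ranges over all subsets (incidentally, the paper's condition (ii) should read $I\subset\{1,\dots,N\}$ rather than $\{1,\dots,M\}$).

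Two caveats. First, your counterexample with ${\bf v}_1=(\tfrac12,0)$ correctly shows that the statement as literally written fails for general real vectors, so the integrality of the ${\bf v}_j$ (their entries are $0$ or $\pm1$ in every application in the paper) must be treated as an implicit hypothesis; flagging this is a useful observation, not a defect of your argument. Second, your constant depends on the vectors ${\bf v}_j$, both through the continuous Brascamp--Lieb constant and through $\|{\bf v}_j\|_1$, whereas the proposition asserts dependence only on $N$, $M$ and the $p_j$'s; since the paper only ever invokes the inequality for vectors with entries in $\{0,\pm1\}$, of which there are finitely many for fixed $M$ and $N$, this discrepancy is harmless, but it is worth stating explicitly if your derivation were to be inserted as a proof.
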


\subsection{Stein's method}
Let  $h: \mathbb{R} \to \mathbb{R}$ be a Borel function such that $h \in L^1(\R, \gamma)$. The ordinary differential equation
      \begin{equation} \label{stein}
      f'(x) - xf(x) = h(x) - \mathbb{E}(h(Z))
      \end{equation}
is called Stein's equation associated with $h$. The function 
\[
f_h(x):= e^{x^2/2}\int_{-\infty}^x (h(y) - \mathbb{E}(h(Z)))e^{-y^2/2} dy
\]
 is the unique solution to the Stein's equation satisfying $\lim_{|x| \to \infty} e^{-x^2/2} f_h(x) = 0$. Moreover, if $h$ is bounded, $f_h$ satisfies
      \begin{equation}  \label{equ81}
      \|f_h \|_\infty \leq \sqrt{\frac{\pi}{2}} \|h - \mathbb{E}(h(Z)) \|_\infty 
      \end{equation}
      and
      \begin{equation}  \label{equ82}
       \|f_h' \|_\infty \leq 2\|h - \mathbb{E}(h(Z)) \|_\infty
       \end{equation}
(see \cite{np-book} and the references therein). 

We recall that the total variation distance between the laws of two random variables $F,G$ is defined by
\[
d_{\rm TV}(F,G) = \sup_{B \in \mathcal{B}(\mathbb{R})}|P(F \in B) - P(G \in B)| \,,
\]
 where the supremum runs over all Borel sets $B \subset \mathbb{R}$.  Substituting $x$ by $F$  in  Stein's equation (\ref{stein}) and
 using the  inequalities (\ref{equ81}) and (\ref{equ82}) lead  to the fundamental estimate
\begin{equation}  \label{equ83}
d_{\rm TV}(F,Z) =  \sup_{f\in  \mathcal{C}^1(\R),  \| f\|_\infty \le \sqrt{ \pi/2}, \| f' \|_\infty \le 2 } | \EE  (f'(F)- Ff(F)) | \,.
\end{equation}

\section{Basic estimates for the total variation distance}
In the framework of an isonormal Gaussian process $\XX$, we  can use Stein's equation to estimate  the  total variation distance between a random variable $F = \delta(u)$ and $Z$.  
 First let us recall the following basic result (see \cite{np-book}), which is an easy consequence of  (\ref{equ83}) and the duality relationship (\ref{dua}).
 
\begin{prop}\label{stein.hd0}
	Assume that $u\in {\rm Dom} \,\delta$,  $F=\delta(u) \in \mathbb{D}^{1,2}$ and $\EE(F^2)=1$.  Then,
	\begin{eqnarray*}
	d_{\rm TV} (F,Z) 	\le   2 \EE( |1-\langle DF, u \rangle_{\mathfrak{H}}|) \,.
	\end{eqnarray*}
\end{prop}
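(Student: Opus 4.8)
The plan is to combine Stein's fundamental estimate \eqref{equ83} with the duality relationship \eqref{dua} defining the divergence. Start from \eqref{equ83}, which represents $d_{\rm TV}(F,Z)$ as the supremum over $f \in \mathcal{C}^1(\R)$ with $\|f\|_\infty \le \sqrt{\pi/2}$ and $\|f'\|_\infty \le 2$ of the quantity $|\EE(f'(F) - Ff(F))|$. The goal is to show each such term is bounded by $2\,\EE(|1 - \langle DF, u\rangle_{\mathfrak{H}}|)$.

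First I would fix such an $f$ and rewrite the term $\EE(Ff(F))$. Since $F = \delta(u)$, the duality \eqref{dua} gives $\EE(Ff(F)) = \EE(\delta(u) f(F)) = \EE(\langle D(f(F)), u\rangle_{\mathfrak{H}})$, provided $f(F) \in \mathbb{D}^{1,2}$, which holds because $F \in \mathbb{D}^{1,2}$ and $f$ has bounded derivative (so the chain rule for the Malliavin derivative applies). By the chain rule, $D(f(F)) = f'(F)\,DF$, hence $\EE(Ff(F)) = \EE(f'(F)\langle DF, u\rangle_{\mathfrak{H}})$. Substituting this back, we get
\[
\EE(f'(F) - Ff(F)) = \EE\big(f'(F)(1 - \langle DF, u\rangle_{\mathfrak{H}})\big).
\]
Now bound the absolute value: $|\EE(f'(F)(1 - \langle DF, u\rangle_{\mathfrak{H}}))| \le \|f'\|_\infty \,\EE(|1 - \langle DF, u\rangle_{\mathfrak{H}}|) \le 2\,\EE(|1 - \langle DF, u\rangle_{\mathfrak{H}}|)$, using $\|f'\|_\infty \le 2$. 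Taking the supremum over admissible $f$ yields the claimed inequality.

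The only technical point requiring care—and the one I expect to be the main (minor) obstacle—is the justification that $f(F) \in \mathbb{D}^{1,2}$ so that the duality \eqref{dua} and the chain rule are legitimately applicable. Strictly, $f$ from Stein's method need only be $\mathcal{C}^1$ with bounded value and derivative, so one invokes the standard chain rule for Lipschitz (or $\mathcal{C}^1$ with bounded derivative) functions composed with $\mathbb{D}^{1,2}$ random variables; since $DF \in L^2(\Omega;\mathfrak{H})$ and $f'(F)$ is bounded, $D(f(F)) = f'(F) DF \in L^2(\Omega;\mathfrak{H})$, confirming $f(F) \in \mathbb{D}^{1,2}$. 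One also needs $\langle DF, u\rangle_{\mathfrak{H}} \in L^1(\Omega)$ for the final expectation to make sense, which follows from the hypothesis $F = \delta(u) \in \mathbb{D}^{1,2}$ together with $u \in \dom\,\delta \subset L^2(\Omega;\mathfrak{H})$ via Cauchy–Schwarz. Everything else is a direct chain of substitutions, so the proof is short.
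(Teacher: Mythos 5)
Your argument is correct and is exactly the route the paper intends: it derives the bound as an immediate consequence of Stein's estimate \eqref{equ83} together with the duality relationship \eqref{dua} (via the chain rule $D(f(F))=f'(F)DF$ and $\|f'\|_\infty\le 2$), which is precisely how the paper justifies Proposition \ref{stein.hd0}. Your extra care about $f(F)\in\mathbb{D}^{1,2}$ and the integrability of $\langle DF,u\rangle_{\mathfrak{H}}$ is sound and fills in the details the paper leaves to the cited reference.
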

 Notice that, applying the duality relationship (\ref{dua}), we can write
 \[
 \EE(\langle DF, u \rangle_{\mathfrak{H}})= \EE( F \delta (u))= \EE(F^2) = 1.
 \]
 As a consequence, if $F \in \mathbb{D}^{2,2}$, we apply Cauchy-Schwarz and Poincar\'e inequalities to derive the following estimate
\begin{equation}
		d_{\rm TV} (F,Z)   \leq 2 \sqrt{\EE(1-\langle DF, u \rangle_{\mathfrak{H}})^2} = 2 \sqrt{{\rm Var}(D_uF)} \leq 2 \sqrt{\EE( \|D(D_uF)\|^2_{\mathfrak{H}})} \label{2bound} \,,
	\end{equation}
	where we have used the notation $D_uF=\langle u, DF \rangle_{\mathfrak{H}}$. We will also write $ D_u^{i+1} F = \langle u, D(D_u^i F) \rangle_{\mathfrak{H}}  $ for  $i \ge 1$.
	
Furthermore, if the random variable $F$ admits higher  order derivatives,  iterating the integration by parts argument   we can improve the bound   (\ref{2bound}) as follows.  
\begin{prop}\label{stein.hd}
	Assume that  $u\in {\rm Dom} \,\delta$,  $F=\delta(u) \in \mathbb{D}^{3,2}$ and $\EE(F^2)=1$.   Then
	\begin{eqnarray*}
	d_{\rm TV} (F,Z)  	  & \leq & (8+\sqrt{32\pi}) \, \EE(\|D(D_uF)\|^2_{\mathfrak{H}}) + \sqrt{2\pi} \, |\EE (F^3)| \\
		 & & + \ \sqrt{32\pi}\,\EE(|D_u^2F|^2 )+ 4\pi \,\EE(|D_u^3 F|) \,.
	\end{eqnarray*}
\end{prop}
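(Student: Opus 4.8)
As in Proposition \ref{stein.hd0}, the point of departure is the Stein bound (\ref{equ83}): one must estimate $|\EE(f'(F)-Ff(F))|$ uniformly over $f\in\mathcal{C}^1(\R)$ with $\|f\|_\infty\le\sqrt{\pi/2}$ and $\|f'\|_\infty\le 2$, and by a routine smoothing we may treat $f$ as regular as the computations below require. Since $F=\delta(u)$, the duality relation (\ref{dua}) gives $\EE(Ff(F))=\EE(\langle D(f(F)),u\rangle_{\HH})=\EE(f'(F)D_uF)$, hence
\[
\EE(f'(F)-Ff(F))=\EE\big(f'(F)(1-D_uF)\big)=:\EE(f'(F)W),\qquad W:=1-D_uF .
\]
Applying (\ref{dua}) with test function $F$ also shows $\EE(D_uF)=\EE(F^2)=1$, so $\EE(W)=0$; this vanishing mean is what will eventually produce the $\EE(F^3)$ term rather than a cruder quantity. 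Proposition \ref{stein.hd0} amounts to stopping here and using $|\EE(f'(F)W)|\le 2\EE|W|$; the improvement comes from integrating by parts further.

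\noindent\emph{Iterated integration by parts.} Using the product rule $\delta(uG)=G\delta(u)-\langle u,DG\rangle_{\HH}$ together with $D_uF=1-W$ and $\langle u,DW\rangle_{\HH}=-D_u^2F$, one transfers the derivative off $f'(F)$; a first round yields the identity
\[
\EE(f'(F)W)=\EE\big(f'(F)W^2\big)+\EE\big(f(F)WF\big)+\EE\big(f(F)D_u^2F\big),
\]
and the last two summands are processed by further rounds of the same device, which in turn bring in $\langle u,D(D_u^2F)\rangle_{\HH}=D_u^3F$. Because $F\in\mathbb{D}^{3,2}$, the process terminates after finitely many steps, leaving a list of terms of the form $\EE(f^{(i)}(F)\,\Xi)$ with $i\in\{0,1\}$ and $\Xi$ one of $W^2$, $WF$, $D_u^2F$, $(D_u^2F)^2$, $D_u^3F$, together with scalar factors such as $\EE(f(F))$ that arise from taking expectations. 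Keeping this expansion under control is the delicate point, since every individual integration by parts is reversible.

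\noindent\emph{Main term and remainders.} In $\EE(f(F)D_u^2F)=\EE(f(F))\,\EE(D_u^2F)+{\rm Cov}(f(F),D_u^2F)$ the main contribution is $\tfrac12\EE(F^3)\,\EE(f(F))$, where we used the elementary identity $\EE(F^3)=\EE(F^2\delta(u))=2\EE(FD_uF)=2\EE(D_u^2F)$ (two applications of (\ref{dua})); since $|\EE(f(F))|\le\|f\|_\infty=\sqrt{\pi/2}$, collecting all such occurrences produces the term $\sqrt{2\pi}\,|\EE(F^3)|$. Every other term is estimated crudely, by $\|f'\|_\infty\le 2$ or $\|f\|_\infty\le\sqrt{\pi/2}$ times the $L^1$-norm of the accompanying factor, followed by Cauchy--Schwarz and the Poincar\'e inequality ${\rm Var}(G)\le\EE(\|DG\|_{\HH}^2)$. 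For instance $|\EE(f'(F)W^2)|\le 2\EE(W^2)=2\,{\rm Var}(D_uF)\le 2\,\EE(\|D(D_uF)\|_{\HH}^2)$; the terms carrying $D_u^2F$ or $(D_u^2F)^2$ are bounded by multiples of $\EE(|D_u^2F|^2)$, and the final-round term by a multiple of $\EE(|D_u^3F|)$. The precise constants $8+\sqrt{32\pi}$, $\sqrt{2\pi}$, $\sqrt{32\pi}=8\sqrt{\pi/2}$ and $4\pi$ just record how many times each estimate is used and whether it carries $\|f\|_\infty$ or $\|f'\|_\infty$. Taking the supremum over $f$ gives the stated inequality.

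\noindent\emph{Main obstacle.} The genuine difficulty is organizational rather than conceptual. Since all the integrations by parts are reversible, a naive chain of rearrangements collapses back to the trivial identity $\EE(f'(F)W)=\EE(f'(F)W)$; one must therefore break the chain at exactly the right places, estimating some terms by an $L^1$--sup-norm bound while isolating from $\EE(f(F)D_u^2F)$ the mean $\EE(D_u^2F)=\tfrac12\EE(F^3)$, and then comparing the remaining variances with Dirichlet forms. Tracking the four resulting groups of terms with their sharp constants is where the bulk of the work lies. A subsidiary technical point is to check that $D_u^2F$ and $D_u^3F$ are well defined under the hypothesis $F\in\mathbb{D}^{3,2}$ (and the regularity of $u$ implicit in $u\in\dom\delta$) and that the smoothing of the Stein solution, which enters only through $\|f\|_\infty$ and $\|f'\|_\infty$, does not affect the bound.
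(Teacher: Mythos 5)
There is a genuine gap here: you have correctly located the obstacle (the reversibility of the integration by parts), but your proposal contains no mechanism that resolves it, and the expansion you write down cannot be closed as stated. Your first-round identity $\EE(f'(F)W)=\EE(f'(F)W^2)+\EE(f(F)WF)+\EE(f(F)D_u^2F)$, with $W=1-D_uF$, is correct, but the middle term $\EE(f(F)WF)$ is fatal to the scheme: bounding it crudely gives at best $\|f\|_\infty\,\EE(|WF|)\le\sqrt{\pi/2}\,\big({\rm Var}(D_uF)\big)^{1/2}$, i.e.\ only the square-root rate of Proposition \ref{stein.hd0} (the analogue of Proposition \ref{bd.4m-1}), while integrating it by parts once more simply reverses the first step and collapses the chain, exactly as you concede in your closing paragraph. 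Moreover, even if that term were disposed of, controlling ${\rm Cov}(f(F),D_u^2F)$ with only $\|f\|_\infty\le\sqrt{\pi/2}$ yields a term proportional to $\EE(|D_u^2F|)$ (that is the bound of Proposition \ref{stein.hd2}), not the quadratic combination $\EE(|D_u^2F|^2)+\EE(|D_u^3F|)$ appearing in the statement; and the constants $8+\sqrt{32\pi}$, $\sqrt{32\pi}$ and $4\pi$ cannot be produced from the two bounds $\|f\|_\infty\le\sqrt{\pi/2}$, $\|f'\|_\infty\le 2$ alone, so the remark that they ``just record how many times each estimate is used'' cannot be made to work.

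The missing idea, which is the heart of the paper's proof, is the repeated application of Stein's equation to new data, made possible by the centering $\EE(W)=0$. One writes $\EE(f_h'(F)W)=\EE\big((f_h'(F)-\EE(f_h'(Z)))W\big)$ and replaces the centered function $f_h'-\EE(f_h'(Z))$ by $f_\varphi'(\cdot)-(\cdot)f_\varphi(\cdot)$, where $f_\varphi$ solves Stein's equation with data $\varphi=f_h'$, so that $\|f_\varphi\|_\infty\le 4\sqrt{\pi/2}$ and $\|f_\varphi'\|_\infty\le 8$. A second use of \eqref{dua} then gives the identity \eqref{mid.ineq}: the $-Ff_\varphi(F)$ part of the Stein expression exactly absorbs the would-be term $\EE(f_\varphi(F)WF)$, leaving $\EE(f_\varphi'(F)W^2)+\EE(f_\varphi(F)D_u^2F)$ with nothing linear in $W$. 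The first summand is bounded by $8\,\EE(W^2)\le 8\,\EE(\|D(D_uF)\|_{\HH}^2)$ via Poincar\'e; in the second, centering $f_\varphi(F)$ around $\EE(f_\varphi(Z))$ isolates the mean $\EE(D_u^2F)=\tfrac12\EE(F^3)$ (giving $\sqrt{2\pi}\,|\EE(F^3)|$), and the remaining covariance-type term is treated by the same device a third time, with data $\psi=f_\varphi$ and solution $f_\psi$ satisfying $\|f_\psi\|_\infty\le 4\pi$, $\|f_\psi'\|_\infty\le 16\sqrt{\pi/2}$; this is what produces $\sqrt{32\pi}\big(\EE(|D_u^2F|^2)+\EE(\|D(D_uF)\|^2_{\HH})\big)$ and $4\pi\,\EE(|D_u^3F|)$. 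It is precisely this nesting of Stein solutions, with their explicitly growing sup-norm bounds, that breaks the reversibility you worry about and accounts for both the quadratic-order bound and the stated constants; without it, your outline reduces to the weaker Propositions \ref{stein.hd0} or \ref{stein.hd2} at best.
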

\begin{proof}
Fix a  continuous function $h: \R \rightarrow [0,1]$. 
	Using Stein's equation (\ref{stein}), there exists a    function $f_h\in \mathcal{C}^1(\R)$ such that $\|f_h\|_\infty \leq \sqrt{\frac{\pi}{2}}$ and $\|f_h'\|_\infty \leq 2$, satisfying
	\[
	I:=|\EE(h(F)) - \EE(h(Z))| = |\EE(f_h'(F) - Ff_h(F))| \,.
	\]
	Applying  the duality relationship (\ref{dua}),  yields $$I=|\EE(f_h'(F)(1-\langle DF, u \rangle_{\mathfrak{H}}))| \,.$$
	Taking into account  that $\EE(\langle DF, u \rangle_{\mathfrak{H}} )= \EE(F^2) = 1$, we have
	    \[
	    I=|\EE\left((f_h'(F) - \EE(f_h'(Z)))(1-\langle DF, u \rangle_{\mathfrak{H}})\right)| \,.
	    \]
	Let $f_\varphi$ be the solution to Stein's equation associated with the function $\varphi= f'_h$. Then, we have
\[
I= \left|\EE\left((f_{\varphi}'(F) - Ff_{\varphi}(F))(1-\langle DF, u \rangle_{\mathfrak{H}})\right)\right|
\]
	 where $\|f_{\varphi}\|_\infty \leq 4\sqrt{\pi/2}$ and $\|f_{\varphi}'\|_\infty \leq 8$.  Substituting $F$ by $\delta (u)$ and  applying again
	 the  duality relationship (\ref{dua}), yields
	 \begin{eqnarray}
	 	I & = &\left|\EE\left(f_{\varphi}'(F)(1-D_uF) - \langle u, D(f_{\varphi}(F)(1-D_uF))\rangle_{\mathfrak{H}}\right)\right| \nonumber\\
		& = & \left|\EE\left(f_{\varphi}'(F)(1-D_uF)^2\right) + \EE\left(f_{\varphi}(F) D_u^2F\right)\right| \label{mid.ineq}\\
		& \leq & 8 \EE((1-D_uF)^2) + |\EE\left((f_{\varphi}(F) - \EE( f_{\varphi}(Z)))D_u^2 F \right)| + |\EE (f_{\varphi}(Z) )\EE(D_u^2 F)| \nonumber \\
		& =: &I_1 + I_2 + I_3 \,. \nonumber
	 \end{eqnarray}
	 For the term $I_1$, we apply Poincar\'e inequality to get $$I_1 \leq 8 \EE(\|D(D_uF)\|^2_{\mathfrak{H}})\,.$$
	 For the term $I_3$, taking into account that
	   $$\EE (D_u^2 F) = \EE(\langle u, DF \rangle_{\mathfrak{H}} \delta(u)) = \frac{1}{2} \EE (\langle u, DF^2 \rangle_{\mathfrak{H}} )= \frac{1}{2} \EE(F^3 ),$$
	  we obtain $$I_3 \leq 2 \sqrt{\pi/2} |\EE( F^3)| \,.$$
For the term $I_2$, applying Stein's equation associated with $\psi =f_{\varphi}$ yields
	   \begin{eqnarray*}
	   	  I_2 &=& \left|\EE\left((f_{\psi}'(F) - Ff_{\psi}(F)) D_u^2F\right)\right| \\
		  & \leq & \left|\EE \left(f_{\psi}'(F)(D^2_uF - D_uF D_u^2F)\right)\right| + \left|\EE \left(f_{\psi}(F) D_u^3 F\right)\right|,
	   \end{eqnarray*}
	 where $f_{\psi}$ satisfies $\|f_{\psi}\|_\infty \leq  4\pi  $ and $\|f_{\psi}'\|_\infty \leq   16\sqrt{\pi/2}$. Finally,
	   $$\EE(|D^2_uF - D_uF D_u^2F| )\leq \frac{1}{2}\left(\EE(|D_u^2 F|^2) + \EE(|1-D_uF|^2)\right) \leq \frac{1}{2}\left(\EE(|D_u^2 F|^2) + \EE(\|DD_uF\|^2_{\mathfrak{H}})\right) \,. $$
	   This concludes the proof of the proposition.
\end{proof}

If we bound \eqref{mid.ineq} in a different way, we would get the following estimate.
\begin{prop}\label{stein.hd2}
	Assume  that  $u\in {\rm Dom} \,\delta$, $F=\delta(u) \in \mathbb{D}^{2,2}$ and $\EE(F^2)=1$.  Then 
	\[
 	d_{\rm TV} (F,Z)   \leq 8 \EE((1-D_uF)^2 )+ \sqrt{8\pi} \EE(|D_u^2 F|)\,.
	\]
\end{prop}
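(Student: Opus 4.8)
The plan is to reuse the computation from the proof of Proposition \ref{stein.hd} verbatim up to the intermediate identity \eqref{mid.ineq}, and then, instead of performing one more integration by parts on the term $\EE(f_{\varphi}(F)D_u^2F)$, to bound the two terms directly by the sup-norms of $f_{\varphi}$ and $f_{\varphi}'$.

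Concretely, I would fix a continuous function $h\colon\R\to[0,1]$ and, exactly as before, introduce the Stein solution $f_h$ (with $\|f_h\|_\infty\le\sqrt{\pi/2}$ and $\|f_h'\|_\infty\le 2$) and then the Stein solution $f_{\varphi}$ associated with $\varphi=f_h'$, which satisfies $\|f_{\varphi}\|_\infty\le 4\sqrt{\pi/2}$ and $\|f_{\varphi}'\|_\infty\le 8$. Substituting $F=\delta(u)$, applying the duality relationship \eqref{dua}, and using $\EE(\langle DF,u\rangle_{\mathfrak{H}})=\EE(F^2)=1$ gives, precisely as in \eqref{mid.ineq},
\[
I:=|\EE(h(F))-\EE(h(Z))| = \bigl|\EE\bigl(f_{\varphi}'(F)(1-D_uF)^2\bigr)+\EE\bigl(f_{\varphi}(F)D_u^2F\bigr)\bigr| \,.
\]
The hypothesis $F=\delta(u)\in\mathbb{D}^{2,2}$ is exactly what makes $D_uF\in\mathbb{D}^{1,2}$ and hence legitimizes this step. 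At this point, rather than iterating, I would simply estimate
\[
I \;\le\; \|f_{\varphi}'\|_\infty\,\EE\bigl((1-D_uF)^2\bigr)+\|f_{\varphi}\|_\infty\,\EE\bigl(|D_u^2F|\bigr) \;\le\; 8\,\EE\bigl((1-D_uF)^2\bigr)+4\sqrt{\pi/2}\,\EE\bigl(|D_u^2F|\bigr) \,,
\]
and observe that $4\sqrt{\pi/2}=\sqrt{8\pi}$. Taking the supremum over all continuous $h\colon\R\to[0,1]$ and invoking the characterization of the total variation distance (equivalently \eqref{equ83}) then yields the claimed bound.

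Since the bulk of the work — establishing the identity \eqref{mid.ineq} — is already carried out in the proof of Proposition \ref{stein.hd}, there is no genuine obstacle here; the only points requiring care are stopping the integration-by-parts recursion one step earlier than in Proposition \ref{stein.hd} and keeping track of the constants, in particular that the $L^\infty$ bound $4\sqrt{\pi/2}$ on $f_{\varphi}$ is what produces the factor $\sqrt{8\pi}$. The trade-off of this variant is that it requires only $F\in\mathbb{D}^{2,2}$ instead of $\mathbb{D}^{3,2}$, at the cost of the term $\EE(|D_u^2F|)$ in place of the (typically smaller) quantities $|\EE(F^3)|$, $\EE(|D_u^2F|^2)$ and $\EE(|D_u^3F|)$ appearing in Proposition \ref{stein.hd}.
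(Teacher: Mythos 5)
Your proposal is correct and is exactly the argument the paper intends: the paper proves Proposition \ref{stein.hd2} by the single remark ``if we bound \eqref{mid.ineq} in a different way,'' i.e.\ by estimating the two terms of \eqref{mid.ineq} directly with $\|f_{\varphi}'\|_\infty\le 8$ and $\|f_{\varphi}\|_\infty\le 4\sqrt{\pi/2}=\sqrt{8\pi}$, which is precisely what you do, constants included.
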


\section{Main results}\label{main.results}
Consider a centered stationary Gaussian  family of random variables   $X=\{ X_n , n\in \Z\}$  with unit variance and covariance
    $\rho(k) = \EE(X_0 X_k)$ for $k \in \mathbb{Z}$. 
    Define the Hilbert space $\mathfrak{H}$ as the closure of the linear span of $\mathbb{Z}$ under the inner product
    $    \langle j,k \rangle_{\mathfrak{H}} = \rho(j-k)$. 
    The mapping $k \rightarrow X_k$ can be extended to a linear isometry from $\mathfrak{H}$ to  the  closed linear subspace $L^2(\Omega)$ spanned by $X$. Then $\{X_{\varphi}, \varphi \in \mathfrak{H}\}$ is  an isonormal Gaussian process.
    
    Consider the sequence   $	  Y_n := \frac{1}{\sqrt{n}} \sum_{j=1}^n g(X_j) $ introduced in (\ref{yn}), where $g\in L^2(\R, \gamma)$ has  Hermite rank $d\ge 1$ and let 
	$\sigma_n^2= \EE (Y_n^2)$.   	Under condition  (\ref{h1}), it is well known that as $n\to \infty$, $\sigma_n^2 \to \sigma^2$,  where $\sigma^2$ has been defined in (\ref{bm.sig}).
		 
Along the paper, we will denote by $C$ a generic constant, whose value can  be different from one formula to another one. 	
	 
Our aim is to establish estimates on the total variation distance between $Y_n/\sigma_n$ and $Z$.  We will make use of  the representation $Y_n = \delta (u_n)$, where 
\begin{equation} \label{un}
u_n  =   \frac{1}{\sqrt{n}} \sum_{j=1}^n g_1(X_j) j,
\end{equation}
given by Lemma \ref{lem2.1}.
  Then,  if $g\in \D^{2,2} (\R,\gamma)$, by inequality \eqref{2bound} and taking into account that
$\sigma_ n \rightarrow \sigma>0$, we have the estimate
\begin{eqnarray}
	d_{TV} (Y_n / \sigma_n ,Z) &\leq& \frac{1}{\sigma_n^2}\sqrt{\EE  ( |  \langle DY_n,  u_n \rangle_{\HH} -\sigma_n^2| ^2 ) }  \nonumber \\
	&\leq & C\sqrt{\EE \left( \left\| D (\langle DY_n, u_n \rangle_{\HH})\right \| ^2_{\HH} \right)}= C\sqrt{A_1}, \label{yn2.est}
\end{eqnarray}
where $A_1=\EE (\|DD_{u_n} Y_n\|^2_{\mathfrak{H}})$.
Furthermore, using Proposition \ref{stein.hd2}, we can write
\begin{eqnarray}
	d_{TV} (Y_n / \sigma_n ,Z) & \le & \frac{8}{\sigma_n^4} \EE(\|DD_{u_n} Y_n\|^2_{\mathfrak{H}}) + \frac{\sqrt{8\pi}}{\sigma_n^3} \sqrt{\EE(|D_{u_n}^2 Y_n|^2)} \nonumber \\
	   & & \le  C(A_1 + \sqrt{A_2}) \,, \label{dist.ynz2}
\end{eqnarray}
where $A_2=\EE(|D_{u_n}^2 Y_n|^2)$ and
where we recall that   $D_{u_n} Y_n = \langle u_n, DY_n \rangle_{\mathfrak{H}}$ and $D^i_{u_n}Y_n = \langle u_n, D^{i-1}_{u_n}Y_n \rangle_{\mathfrak{H}}$ for $i\ge 2$. 

 If  $g\in \D^{3,2} (\R,\gamma)$, using Proposition \ref{stein.hd}, we obtain
\begin{eqnarray}
d_{TV} (Y_n / \sigma_n ,Z) & \le & \frac{8+\sqrt{32\pi}}{\sigma_n^4} \EE(\|DD_{u_n} Y_n\|^2_{\mathfrak{H}}) + \ \frac{\sqrt{32\pi}}{\sigma_n^6} \EE(|D^2_{u_n}Y_n|^2 ) \nonumber \\
   & & + \frac{\sqrt{2\pi}}{\sigma_n^3} |\EE(Y_n^3)|  + \frac{4\pi}{\sigma_n^4} \sqrt{\EE(|D^3_{u_n} Y_n|^2)} \nonumber \\
   & &\le C(A_1 + A_2 + A_3 + A_4) \,,\label{dist.ynz}
\end{eqnarray}
where  $A_3=|\EE(Y_n^3)|$ and $A_4=\sqrt{\EE(|D^3_{u_n} Y_n|^2)}$.  

In the sequel we will derive estimates on the terms $A_i$, $i=1, \dots, 4$ in terms of the covariance function $\rho(k)$.
We use the notation $A_i \prec A_j$ if $A_i$'s bound has a better convergence rate to zero than that of $A_j$.  To get the best possible rate, we use the following strategy. If $g$ is just twice differentiable, we can use
the estimates (\ref{yn2.est}) and (\ref{dist.ynz2}). Then we will 
 compare the rates of the terms $A_1$ and $A_2$. If $A_1 \prec A_2$, we just use the bound \eqref{yn2.est}. Otherwise, \eqref{dist.ynz2} would be used. If $g$ has higher order derivatives, 
we would use the bound \eqref{dist.ynz} if  $A_2 \prec \sqrt{A_1}$ and the rates of $A_3$ and $A_4$ are better than  those of   $\sqrt{A_2}$ and $  \sqrt{A_1}$. Otherwise, if the rate of either $A_3$ or $A_4$ is worse than  that of $\sqrt{A_1}$ or $\sqrt{A_2}$, we consider the bound \eqref{dist.ynz2} or \eqref{yn2.est} depending on the comparison between $A_2$ and $A_1$.  
 
 Before presenting the main results, we will derive some expressions and estimates for the terms $A_i$, $i=1,2,4$. 
To simplify the notation, we will write   $\rho_{ij} = \rho(l_i-l_j)$ for any $1 \le i,j \le n$.

\begin{lemma}\label{var.yn} Suppose that $g\in \D^{2,4} (\R,\gamma)$. Then, 
	\[
	A_1 \leq \frac{2}{n^2} \sum_{i=1}^2 \sum_{l_1,l_2,l_3,l_4=1}^n \left| \EE(I_i) \rho(l_1-l_2) \rho(l_3-l_4) \rho(l_2-l_4) \right|,
	\]
where
   \begin{equation}
    I_1 = g''(X_{l_2}) g''(X_{l_4})   g_1(X_{l_1}) g_1(X_{l_3}) \,, \label{I1}
    \end{equation}
    and
    \begin{equation}
    I_2 = g'(X_{l_1}) g'(X_{l_3})  g'_1(X_{l_2}) g'_1(X_{l_4}) \label{I2} \,.
   \end{equation}
\end{lemma}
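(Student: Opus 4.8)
The plan is to compute $A_1=\EE(\|D(D_{u_n}Y_n)\|^2_{\mathfrak H})$ almost explicitly and then bound the resulting sum summand by summand. Write $\varphi_j\in\mathfrak H$ for the element associated with the integer $j$ (so that $u_n=\frac1{\sqrt n}\sum_{j=1}^n g_1(X_j)\varphi_j$ in the notation of \eqref{un}), and recall $X_j=I_1(\varphi_j)$, $DX_j=\varphi_j$ and $\langle\varphi_j,\varphi_l\rangle_{\mathfrak H}=\rho(j-l)$. Starting from $Y_n=\delta(u_n)$ and $DY_n=\frac1{\sqrt n}\sum_{l=1}^n g'(X_l)\varphi_l$, I would first obtain
\[
D_{u_n}Y_n=\langle u_n,DY_n\rangle_{\mathfrak H}=\frac1n\sum_{j,l=1}^n g_1(X_j)g'(X_l)\rho(j-l),
\]
and then, applying $D$ once more by the chain rule $D(g_1(X_j))=g_1'(X_j)\varphi_j$, $D(g'(X_l))=g''(X_l)\varphi_l$,
\[
D(D_{u_n}Y_n)=w_1+w_2,
\]
where
\[
w_1:=\frac1n\sum_{j,l=1}^n g_1'(X_j)g'(X_l)\rho(j-l)\varphi_j,\qquad w_2:=\frac1n\sum_{j,l=1}^n g_1(X_j)g''(X_l)\rho(j-l)\varphi_l.
\]
Since $g\in\D^{2,4}(\R,\gamma)$, the functions $g'$ and $g''$ lie in $L^4(\R,\gamma)$, while $g_1=T_1g$ and $g_1'$ are even more regular by Lemma \ref{lem2.4}, so $g_1,g_1'\in L^4(\R,\gamma)$ as well; consequently $g(X_j)\in\D^{2,4}$, the chain-rule computations and the interchange of $D$ with the finite sums are legitimate, and every quartic product of the random variables $g'(X_\bullet),g''(X_\bullet),g_1(X_\bullet),g_1'(X_\bullet)$ is integrable by the generalized H\"older inequality.

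Next I would use the elementary inequality $\|w_1+w_2\|^2_{\mathfrak H}\le 2\|w_1\|^2_{\mathfrak H}+2\|w_2\|^2_{\mathfrak H}$, so that $A_1\le 2\EE(\|w_1\|^2_{\mathfrak H})+2\EE(\|w_2\|^2_{\mathfrak H})$, and evaluate the two norms by expanding the inner products and using $\langle\varphi_a,\varphi_b\rangle_{\mathfrak H}=\rho(a-b)$:
\begin{align*}
\EE(\|w_1\|^2_{\mathfrak H})&=\frac1{n^2}\sum_{j,l,j',l'=1}^n\EE\big(g_1'(X_j)g'(X_l)g_1'(X_{j'})g'(X_{l'})\big)\,\rho(j-l)\rho(j'-l')\rho(j-j'),\\
\EE(\|w_2\|^2_{\mathfrak H})&=\frac1{n^2}\sum_{j,l,j',l'=1}^n\EE\big(g_1(X_j)g''(X_l)g_1(X_{j'})g''(X_{l'})\big)\,\rho(j-l)\rho(j'-l')\rho(l-l').
\end{align*}
Relabelling the dummy indices as $(l_1,l_2,l_3,l_4)=(l,j,l',j')$ in the first identity and $(l_1,l_2,l_3,l_4)=(j,l,j',l')$ in the second, and using that $\rho$ is even, the two expectations become $\EE(I_2)$ and $\EE(I_1)$ respectively, while in both cases the accompanying weight equals $\rho(l_1-l_2)\rho(l_3-l_4)\rho(l_2-l_4)$. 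Bounding each summand by its modulus then gives
\[
A_1\le\frac2{n^2}\sum_{l_1,l_2,l_3,l_4=1}^n\Big(\big|\EE(I_1)\rho(l_1-l_2)\rho(l_3-l_4)\rho(l_2-l_4)\big|+\big|\EE(I_2)\rho(l_1-l_2)\rho(l_3-l_4)\rho(l_2-l_4)\big|\Big),
\]
which is precisely the asserted estimate.

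The argument is essentially bookkeeping; no genuinely hard estimate enters at this stage. The only points requiring a little care are (a) checking that $g(X_j)\in\D^{2,4}$, which justifies the chain rule, the interchange of $D$ with the finite sums, and the integrability of all quartic products, and (b) correctly tracking which basis vector $\varphi_\bullet$ each Malliavin derivative produces, together with the parity of $\rho$, when matching the two expectations with $I_1$ and $I_2$. The substantive work is postponed to the subsequent lemmas, which estimate the sums $\sum_{l_1,\dots,l_4}\big|\EE(I_i)\rho(l_1-l_2)\rho(l_3-l_4)\rho(l_2-l_4)\big|$ by means of the Brascamp--Lieb inequality (Proposition \ref{prop2.10}) and condition \eqref{h1}.
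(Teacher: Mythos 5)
Your proof is correct and follows essentially the same route as the paper: the splitting $D(D_{u_n}Y_n)=w_1+w_2$ is exactly the paper's decomposition into $\langle D_*Y_n,Du_n(*)\rangle_{\HH}$ and $D^2Y_n\otimes_1 u_n$ (you just write it via the explicit chain rule on the double sum rather than in contraction notation), followed by the same $2(\|w_1\|^2+\|w_2\|^2)$ bound, expansion of the $\HH$-norms through $\langle\varphi_a,\varphi_b\rangle_{\HH}=\rho(a-b)$, and relabelling of indices to match $I_1$ and $I_2$.
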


\begin{proof}
	First, we have
	\begin{eqnarray}
	 \EE \left( \left\| D (\langle DY_n, u_n \rangle_{\HH})\right \| ^2_{\HH} \right) \notag & \le& 2 \EE \left( \left\|  D^2Y_n\otimes _1 u_n \right \| ^2_{\HH} \right)
	 +2\EE \left( \left\|  \langle D_*Y_n, Du_n(*) \rangle_{\HH}\right \| ^2_{\HH} \right),  \label{A1}
	 \end{eqnarray}
	 where $D^2Y_n\otimes _1 u_n$ denotes the contraction of one variable between $D^2Y_n$ and $u_n$ 
	 and 
	 \[
	  \langle D_*Y_n, Du_n(*) \rangle_{\HH} =\sum_{i=1} ^\infty \langle DY_n ,e_i \rangle_{\HH}  D( \langle u_n, e_i \rangle_{\HH}),
	  \]
	with $\{e_i, i\ge 1\}$ being a complete orthonormal system in $\HH$.  
 This implies, taking into account (\ref{un}), that
	\[
	 D^2Y_n \otimes _1 u_n  =\frac 1n  \sum_{j,k=1}^n  g''(X_k)  g_1(X_j)  \rho(j-k)   k
	\]
	and
	\[
	  \langle D_*Y_n, Du_n(*) \rangle_{\HH}
	   =\frac 1n  \sum_{j,k=1}^n g'(X_j)  g_1'(X_k)\rho(j-k) k \,.
	\]
	As a consequence,
	\[
	 \left\|  D^2Y_n \otimes _1 u_n\right \| ^2_{\HH} = \frac 1{n^2}  \sum_{l_1,l_2,l_3,l_4=1}^n  I_1 \rho(l_1-l_2) \rho(l_3-l_4) \rho(l_2-l_4),
	 \]
		and
	\[
	 \left\|\langle D_*Y_n, Du_n(*) \rangle_{\HH}\right \| ^2_{\HH} = \frac 1{n^2}  \sum_{l_1,l_2,l_3,l_4=1}^n  I_2 \rho(l_1-l_2) \rho(l_3-l_4) \rho(l_2-l_4),
	 \]
	 which implies the desired result.
\end{proof}

Next we derive a simple estimate for  the term $A_2$, assuming again that $g\in \D^{2,6} (\R,\gamma)$. Notice that
  \[
  D_{u_n}Y_n = \frac{1}{n} \sum_{l_1, l_2=1}^n g_1(X_{l_1})g'(X_{l_2}) \rho(l_1 - l_2) \,.
  \]
Denote 
\begin{equation}  \label{f1}
f_{1}(l_1, l_2, l_3) =  g_1'(X_{l_1})g'(X_{l_2}) g_1(X_{l_3}) \,
\end{equation}
and
       \begin{equation}  \label{f2}
       f_{2}(l_1, l_2, l_3)=g_1(X_{l_1})g''(X_{l_2}) g_1(X_{l_3}) \,.
       \end{equation}
Correspondingly, using the notation $\rho_{ij} = \rho(l_i-l_j)$, we can write
\[
  	 D^2_{u_n} Y_n = \frac{1}{\sqrt{n^3}} \sum_{l_1, l_2, l_3=1}^n   \Big (f_{1}(l_1, l_2, l_3) \rho_{12} \rho_{13}  + \ f_{2}(l_1, l_2, l_3) \rho_{12} \rho_{23}\Big) \,.
\]
Thus, 
  \begin{eqnarray}
  	  A_2 = \EE((D^2_{u_n}Y_n)^2) &\leq&  \frac{2}{n^3} \sum_{l_1, \ldots, l_6=1}^n \Big( \EE (f_{1}(l_1, l_2, l_3) f_{1}(l_4, l_5, l_6)) \rho_{12} \rho_{13} \rho_{45} \rho_{46} \nonumber\\
	 & & \qquad \quad + \ \EE (f_{2}(l_1, l_2, l_3) f_{2}(l_4, l_5, l_6)) \rho_{12} \rho_{23} \rho_{45} \rho_{56} \Big) \,.\label{est.a2}
  \end{eqnarray}
    Finally, let us  compute the term $A_4$, assuming $g\in \D^{3,8} (\R,\gamma)$. We have
    \begin{eqnarray*}
    D^3_{u_n} Y_n &= &  \frac{1}{n^2} \sum_{l_1, l_2, l_3,l_4=1}^n  \sum_{i=1}^3  \Big (f_{1}^{(i)}(l_1, l_2, l_3) g_1(X_{l_4}) \rho_{12} \rho_{13}\rho_{i4} \\
    && + f_{2}^{(i)}(l_1, l_2, l_3) g_1(X_{l_4}) \rho_{12} \rho_{23}\rho_{i4}\Big) \,,
 \end{eqnarray*}
 where 
    \begin{eqnarray*}
f_{1}^{(1)}(l_1, l_2, l_3) &= & g_1''(X_{l_1})g'(X_{l_2}) g_1(X_{l_3}) \,, \\
f_{1}^{(2)}(l_1, l_2, l_3) &= & g_1'(X_{l_1})g''(X_{l_2}) g_1(X_{l_3}) \,, \\
f_{1}^{(3)}(l_1, l_2, l_3) &= & g_1'(X_{l_1})g'(X_{l_2}) g'_1(X_{l_3}) 
\end{eqnarray*}
and
   \begin{eqnarray*}
f_{2}^{(1)}(l_1, l_2, l_3) &= & g_1'(X_{l_1})g''(X_{l_2}) g_1(X_{l_3}) \,, \\
f_{2}^{(2)}(l_1, l_2, l_3) &= & g_1(X_{l_1})g'''(X_{l_2}) g_1(X_{l_3}) \,, \\
f_{2}^{(3)}(l_1, l_2, l_3) &= & g_1(X_{l_1})g''(X_{l_2}) g'_1(X_{l_3}).
\end{eqnarray*}
Therefore,
\begin{eqnarray}  \nonumber
A_4^2&=&\EE((D^3_{u_n}Y_n)^2)\\  \nonumber
 & \leq& \frac{2}{n^4}  \sum_{i=1}^3  \sum_{ j=1, \ldots, 8} \sum_{l_j=1}^n 
 \EE \left(f_{1}^{(i)}(l_1, l_2, l_3)  g_1(X_{l_4}) f_{1}^{(i+4)}(l_5, l_6, l_7)  g_1(X_{l_8})\right)    \\  \nonumber
&&   \times \rho_{12} \rho_{13}\rho_{i4}  \rho_{56} \rho_{57}\rho_{(i+4) 8}  \\  \nonumber
&&+ \frac{2}{n^4}   \sum_{i=1}^3  \sum_{ j=1, \ldots, 8} \sum_{l_j=1}^n
 \EE \left(f_{2}^{(i)}(l_1, l_2, l_3)  g_1(X_{l_4}) f_{2}^{(i+4)}(l_5, l_6, l_7)  g_1(X_{l_8})\right)    \\ \label{equ90}
&&   \times \rho_{12} \rho_{23}\rho_{i4}  \rho_{56} \rho_{67}\rho_{(i+4) 8}.
\end{eqnarray}

We are now ready to state and prove the main results of this paper. The notation is that of Theorem \ref{bm}.

\subsection{Case  $d=1$}

\begin{thm}\label{main.d1}
Let $d=1$ and $g \in \D^{2,4} (\R,\gamma)$. 
  Suppose that (\ref{h1}) holds true.  Then    
      \[
      d_{\rm TV}(Y_n / \sigma_n, Z) \leq C n^{-\frac{1}{2}} \,.
      \]
\end{thm}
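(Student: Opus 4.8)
The plan is to obtain the bound directly from the estimate (\ref{yn2.est}) together with Lemma \ref{var.yn}; no higher-order Stein bound is needed in this case. Since $g \in \D^{2,4}(\R,\gamma) \subset \D^{2,2}(\R,\gamma)$ and $\sigma_n \to \sigma > 0$, inequality (\ref{yn2.est}) gives $d_{\rm TV}(Y_n/\sigma_n, Z) \le C\sqrt{A_1}$ with $A_1 = \EE(\|DD_{u_n}Y_n\|_{\mathfrak{H}}^2)$, so it suffices to prove $A_1 \le C n^{-1}$. By Lemma \ref{var.yn},
\[
A_1 \le \frac{2}{n^2}\sum_{i=1}^2\sum_{l_1,l_2,l_3,l_4=1}^n \bigl|\EE(I_i)\,\rho(l_1-l_2)\,\rho(l_3-l_4)\,\rho(l_2-l_4)\bigr|,
\]
where $I_1,I_2$ are the products of derivatives of $g$ and of $g_1$ given in (\ref{I1}) and (\ref{I2}).

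The first step is to show that $\sup_{l_1,l_2,l_3,l_4}|\EE(I_i)| < \infty$. Each $X_{l_j}$ is marginally $N(0,1)$, so the generalized H\"older inequality applied to the four factors, each raised to the power $4$, yields
\[
|\EE(I_1)| \le \|g''\|_{L^4(\gamma)}^2\,\|g_1\|_{L^4(\gamma)}^2, \qquad |\EE(I_2)| \le \|g'\|_{L^4(\gamma)}^2\,\|g_1'\|_{L^4(\gamma)}^2.
\]
Here $\|g'\|_{L^4(\gamma)}$ and $\|g''\|_{L^4(\gamma)}$ are finite because $g \in \D^{2,4}(\R,\gamma)$; $\|g_1\|_{L^4(\gamma)}$ is finite by the $L^p$-bound for the shift operator $T_1$ proved in Section 2 (subtracting the constant term of $g$, if it is nonzero, only adds a constant); and $\|g_1'\|_{L^4(\gamma)}$ is finite via the identity $g_1' = T_1(g') - g_2$ together with the $L^p$-bounds for $T_1$ and $T_2$. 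Consequently $|\EE(I_i)| \le C$ and
\[
A_1 \le \frac{C}{n^2}\sum_{l_1,l_2,l_3,l_4=1}^n |\rho(l_1-l_2)\,\rho(l_3-l_4)\,\rho(l_2-l_4)|.
\]

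The second step is purely a summation estimate. Since $d=1$, hypothesis (\ref{h1}) is exactly $S := \sum_{k\in\Z}|\rho(k)| < \infty$. Summing one index at a time — first $\sum_{l_1}|\rho(l_1-l_2)| \le S$, then $\sum_{l_3}|\rho(l_3-l_4)| \le S$, then $\sum_{l_2}|\rho(l_2-l_4)| \le S$ — leaves only the free sum over $l_4 \in \{1,\dots,n\}$, contributing a factor $n$; hence the quadruple sum is bounded by $nS^3$. Therefore $A_1 \le C S^3 n^{-1}$ and $d_{\rm TV}(Y_n/\sigma_n, Z) \le C\sqrt{A_1} \le C n^{-1/2}$.

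This argument is essentially mechanical; the only points requiring attention are that the hypothesis $g \in \D^{2,4}$ (rather than just $\D^{2,2}$) is what makes the expectations $\EE(I_i)$ uniformly bounded, and that one must track which covariance factor is summed against which of the indices $l_1,l_3,l_2$ so that exactly one index — hence one power of $n$ — survives to be cancelled against the $n^{-2}$ prefactor supplied by Lemma \ref{var.yn}. There is no genuine obstacle for $d=1$; the difficulties of the paper lie in the higher-rank cases, where more $\rho$-factors must be controlled and the cruder bound (\ref{yn2.est}) no longer suffices.
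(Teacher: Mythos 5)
Your proof is correct and follows essentially the same route as the paper: bound $d_{\rm TV}$ by $C\sqrt{A_1}$ via (\ref{yn2.est}), use Lemma \ref{var.yn} together with the $L^p$-boundedness of the shift operator (H\"older) to get $\sup|\EE(I_i)|<\infty$, and then control the triple covariance sum — the paper does this last step by a change of variables rather than summing indices one at a time, which is an immaterial difference.
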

\begin{proof}
We use  the inequality   \eqref{yn2.est}   and we need to estimate the term $A_1$.  By Lemma \ref{lem2.4}, H\"older's inequality and 
the fact that $g \in \D^{2,4} (\R,\gamma)$,
 the  quantities $I_1 $ and $ I_2$  have finite expectation. Then
    \[
     A_1 \leq \frac{C}{n^2} \sum_{l_1, l_2, l_3, l_4 = 1}^n |\rho(l_1 - l_2) \rho(l_3 - l_4) \rho(l_2 - l_4)| \,.
     \]
Making the change of variables $k_1= l_1 -l_2$, $k_2= l_3-l_4$, $ k_3= l_2-l_4$ and using condition (\ref{h1}) with $d=1$,   we  obtain
	   \[
	   A_1 \leq \frac{C}{n} \sum_{|k_i| \leq n, 1\le i\le 3} |\rho(k_1)\rho(k_2)\rho(k_3)| \leq \frac{C}{n} \,,
	   \]
which provides the desired estimate.
\end{proof}

\subsection{Case of $d=2$}
\begin{thm}\label{main.d12}
	Let $d=2$ and suppose that (\ref{h1}) holds true.  
	\begin{itemize}
\item[(i)] If $g \in  \D^{2,4} (\R,\gamma)$, we have
		        \[
		        d_{\rm TV}(Y_n / \sigma_n, Z) \leq C n^{-\frac{1}{2}} \left(\sum_{|k| \leq n} |\rho(k)|\right)^{\frac{3}{2}} \,.
		        \]
	\item[(ii)]	    If $g \in \D^{3,4} (\R,\gamma)$, we have
		    \[
		    d_{\rm TV}(Y_n / \sigma_n, Z) \leq C n^{-\frac{1}{2}} \sum_{|k| \leq n} |\rho(k)| \,.
		    \]
		\item[(iii)]	If $g \in \D^{4,4} (\R,\gamma)$, we have
		\[
				   d_{\rm TV}(Y_n / \sigma_n, Z)  \leq  C n^{-\frac{1}{2}} \left(\sum_{|k| \leq n} |\rho(k)|\right)^{\frac{1}{2}}  + C n^{-\frac{1}{2}} \left(\sum_{|k| \leq n} |\rho(k)|^{\frac{4}{3}}\right)^{\frac 32} \,.
		\]
		\item[(iv)]	If $g \in  \D^{5,6} (\R,\gamma)$, we have	
		\[
				   d_{\rm TV}(Y_n / \sigma_n, Z)  \leq  C n^{-\frac{1}{2}} \left(\sum_{|k| \leq n} |\rho(k)|\right)^{\frac{1}{2}}  + C n^{-\frac{1}{2}} \left(\sum_{|k| \leq n} |\rho(k)|^{\frac{3}{2}}\right)^{2} \,.
		\]
		  \item[(v)] 
			If $g \in \D^{6,8} (\R,\gamma)$, we have
			   \[
			   d_{\rm TV}(Y_n / \sigma_n, Z) \leq    
			   C n^{-\frac{1}{2}} \left(\sum_{|k| \leq n} |\rho(k)|^{\frac{3}{2}}\right)^{2} .
			   \]
			   \end{itemize}
\end{thm}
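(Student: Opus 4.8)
The plan is to prove the five parts by a single ladder argument in the number of available derivatives of $g$, in each case combining one of the Stein-type bounds \eqref{yn2.est}, \eqref{dist.ynz2}, \eqref{dist.ynz} with estimates on the auxiliary quantities $A_1, A_2, A_3, A_4$, following the comparison strategy described right after \eqref{dist.ynz}. The basic mechanism for $d=2$ is that every factor $g_1=T_1g$ occurring in $u_n$ (see \eqref{un}) and in the iterated contractions $D_{u_n}^2Y_n$, $D_{u_n}^3Y_n$ has Hermite rank $1$, and so does $g'$; hence one Gaussian integration by parts -- carried out through the divergence representation of Lemma \ref{lem2.1}, i.e. through the identity
\[
\EE\Big(g_1(X_{l_1})\prod_{a\geq 2}\psi_a(X_{l_a})\Big)=\sum_{a\geq 2}\rho(l_1-l_a)\,\EE\Big(g_2(X_{l_1})\,\psi_a'(X_{l_a})\prod_{b\neq 1,a}\psi_b(X_{l_b})\Big)
\]
-- produces an extra factor $\rho(l_1-l_a)$ at the price of one additional derivative landing on $g$. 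Each unit of extra smoothness thus buys one more power of $\rho$ in the summand, which after summation lowers the exponent of the $\ell^p$-norm of $\rho$ appearing in the bound; this is what drives the successive improvements (i) through (v).

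To estimate the $A_i$'s I would start from the explicit formulas of Section 4: Lemma \ref{var.yn} for $A_1$, \eqref{est.a2} for $A_2$, \eqref{equ90} for $A_4$, and, for $A_3=|\EE(Y_n^3)|$, the classical diagram expansion of $n^{-3/2}\sum_{i,j,k}\EE(g(X_i)g(X_j)g(X_k))$ coming from the Hermite expansion of $g$. Each of these is a finite sum, over four, six or eight indices $l_1,\dots$, of a product of (i) an expectation $\EE(\prod_a\phi_a(X_{l_a}))$, where after the integrations by parts the $\phi_a$ are among the functions $g^{(k)}$, $g_k$ and $g_k^{(l)}=\sum_{i=0}^l\binom li \alpha_{k,i}T_{k+i}(g^{(l-i)})$, and (ii) a product of factors $\rho(l_i-l_j)$. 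The expectation is bounded by a constant via H\"older's inequality -- this is where the hypothesis $g\in\D^{k,p}(\R,\gamma)$ is used, together with the regularizing property of $T_k$ (Lemma \ref{lem2.4}) -- or, when it contains several factors of Hermite rank $\geq 1$, by the sharper Appendix estimates (rank-one Brascamp-Lieb plus H\"older), which bound $|\EE(\prod_a\phi_a(X_{l_a}))|$ by a sum of products $\prod|\rho(l_i-l_j)|^{\alpha_{ij}}$ with the incidences at each vertex dictated by its Hermite rank. Finally the index sum $\sum_{l_1,\dots}\prod|\rho(l_i-l_j)|^{\alpha_{ij}}$ is evaluated by passing to difference variables, which removes one free index and extracts a factor $n$, and then applying the rank-one Brascamp-Lieb inequality (Proposition \ref{prop2.10}) with exponents $p_j$ chosen according to the linear dependences among the difference vectors attached to the $\rho$-factors, turning the sum into a product of norms $(\sum_{|k|\leq n}|\rho(k)|^{1/p_j})^{p_j}$.

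Concretely, for part (v), with $g\in\D^{6,8}(\R,\gamma)$, I would use \eqref{dist.ynz}, so $A_1,A_2,A_3,A_4$ must all be controlled directly. For $A_3$ the dominant diagram is the triangle $\rho(i-j)\rho(j-k)\rho(i-k)$, whose sum, after the change of variables $a=i-j$, $b=j-k$ and Brascamp-Lieb with $p_j=2/3$, is $\leq Cn^{-1/2}(\sum_{|k|\leq n}|\rho(k)|^{3/2})^{2}$; every other diagram carries a squared covariance and contributes only $O(n^{-1/2})$. For $A_1,A_2,A_4$ one carries out the maximal number of integrations by parts permitted by the six derivatives, producing summands with enough powers of $\rho$ that, after summation, each term is again $\leq Cn^{-1/2}(\sum_{|k|\leq n}|\rho(k)|^{3/2})^{2}$; in particular the weaker contribution $n^{-1/2}(\sum_{|k|\leq n}|\rho(k)|)^{1/2}$ that survives in parts (iii)-(iv) gets absorbed once all six derivatives are used. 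Parts (i)-(iv) come from the same computations, truncating the integration-by-parts ladder earlier and invoking \eqref{yn2.est} or \eqref{dist.ynz2} in place of \eqref{dist.ynz} when only two or three derivatives are available; for instance (i) follows from \eqref{yn2.est} together with the crude bound $|\EE(I_1)|,|\EE(I_2)|\leq C$ in Lemma \ref{var.yn}.

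I expect the main obstacle to be the combinatorial bookkeeping: at each integration by parts one must track which factor is differentiated, the Hermite rank of every function that appears (so as to know how many $\rho$-factors the Appendix estimates will extract at each vertex), and the exact Sobolev order $\D^{k,p}$ consumed, while at the same time, for each of the numerous $\rho$-product patterns produced, selecting the exponents $p_j$ in the Brascamp-Lieb inequality that are admissible under the linear relations among the difference vectors and that minimize the resulting $\ell^p$-norm of $\rho$. A secondary difficulty is that for a general $g$ of rank $2$ -- rather than $g=H_2$ -- the expectations split into several diagrams, and one must verify that the non-leading ones are genuinely dominated; and in case (v) one must check that six derivatives really do push $A_1$ below $n^{-1/2}(\sum|\rho|^{3/2})^{2}$ so that the $(\sum|\rho|)^{1/2}$ term disappears.
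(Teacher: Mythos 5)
Your overall architecture is the same as the paper's: represent $Y_n=\delta(u_n)$, use the three bounds \eqref{yn2.est}, \eqref{dist.ynz2}, \eqref{dist.ynz}, estimate $A_1,\dots,A_4$ by integrating by parts through the divergence representations of the rank-one factors, and sum the resulting $\rho$-patterns by a change of variables plus the rank-one Brascamp--Lieb inequality; parts (i)--(ii) as you sketch them are exactly the paper's argument (Lemma \ref{var.yn} with $|\EE(I_i)|\le C$, respectively Lemma \ref{cov.pth1}). However, your explanation of part (v) mis-locates both the source of the improvement and where the hypothesis $\D^{6,8}(\R,\gamma)$ is consumed. The estimates for $A_1$ and $A_2$ in case (v) are \emph{not} improved by further integrations by parts: the paper keeps the four-derivative bounds \eqref{a1.d24} and \eqref{equ3}. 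The term $n^{-1/2}(\sum_{|k|\le n}|\rho(k)|)^{1/2}$ of cases (iii)--(iv) disappears because in \eqref{dist.ynz} the quantities $A_1,A_2$ enter linearly rather than under a square root, and \eqref{a1.d24}, \eqref{equ3} are already dominated by the $A_3$-bound \eqref{a3.d24}, via \eqref{ho1}, \eqref{ho2} and $\sum_{|k|\le n}|\rho(k)|\le C\sqrt n$. The six derivatives and the $L^8$ integrability are spent entirely on $A_4$, where the terms $f_2^{(2)},f_2^{(6)}$ contain $g'''$ and are integrated by parts three more times (Lemmas \ref{a4.c6} and \ref{a4.c6-2}); this is what forces $g\in\D^{6,8}(\R,\gamma)$. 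Relatedly, the selection of which Stein bound to use in (iii) and (iv) is not a matter of ``truncating the ladder'': in (iii) one must fall back on \eqref{yn2.est} because $\sqrt{A_1}$ and $A_2$ are not comparable (e.g.\ $\rho(k)\sim k^{-\alpha}$ with $\alpha\in(\tfrac12,\tfrac23)$), and in (iv) the available $A_4$ estimate is not always better than $A_1+\sqrt{A_2}$, which is why \eqref{dist.ynz2} is used there. Your proposal needs these explicit comparisons to close.

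The second genuine gap is your treatment of $A_3=|\EE(Y_n^3)|$ by ``the classical diagram expansion coming from the Hermite expansion of $g$.'' For a general rank-two $g$ that is only assumed to lie in a Sobolev space $\D^{k,p}(\R,\gamma)$, the triple sum over Hermite orders of $c_{m_1}c_{m_2}c_{m_3}\,\EE\bigl(H_{m_1}(X_i)H_{m_2}(X_j)H_{m_3}(X_k)\bigr)$ is not absolutely summable in any obvious way (the diagram coefficients grow factorially while only $\ell^2$-type decay of the $c_m$ is available), so the assertion that ``the dominant diagram is the triangle, everything else carries a squared covariance'' cannot simply be read off. The paper avoids this by writing $g(X_{l_1})=\delta^2\bigl(g_2(X_{l_1})\,l_1^{\otimes 2}\bigr)$, applying the duality \eqref{dua2} twice and then once more with the available higher derivatives, which yields $A_3\le Cn^{-3/2}\sum_{l_1,l_2,l_3}\bigl(\rho_{12}^2\rho_{13}^2+|\rho_{12}\rho_{13}\rho_{23}|\bigr)$ with constants controlled by H\"older's inequality under the stated Sobolev hypotheses, and hence \eqref{a3.d24}. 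You should replace the diagram expansion by this iterated-divergence argument, i.e.\ the same mechanism you already propose for $A_1$, $A_2$ and $A_4$.
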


\begin{remark}
For $g\in  \D^{6,8} (\R,\gamma)$ the rate estalbished  in point (v) coincides with the rate for  the Hermite polynomial $g(x)=x^2-1$, obtained by   Bierm\'e, Bonami, Nourdin and Peccati in
\cite{bbnp} using the  optimal bound for the total variation distance in the case of random variables in a fixed Wiener chaos derived 
by Nourdin and Peccati in \cite{np-15} (see Proposition \ref{bd.4m-2}).  When the function $g $ belongs to  $ \D^{i,4}(\R,\gamma)$, $2\le i \le 4$ or $  g\in \D^{5,6}(\R,\gamma)$, the rates we have obtained are worse than the rate for $g\in  \D^{6,8} (\R,\gamma)$. For  $  g\in \D^{i,4}(\R,\gamma)$, $i=2,3,4$, the estimates in points (i), (ii) and (iii) will be established using Proposition \ref{stein.hd0}, whereas, for $  g\in \D^{5,6}(\R,\gamma)$   we will use  Proposition \ref{stein.hd2}  to derive the estimate in point (iv) and for   $g\in  \D^{6,8} (\R,\gamma)$ we apply Proposition \ref{stein.hd}.
\end{remark}

\begin{proof}[Proof of Theorem \ref{main.d12}]
The proof will be done in several steps.  

\medskip
\noindent {\it Case $g \in \D^{2,4} (\R,\gamma)$}.  \quad 
We apply Lemma \ref{var.yn} to derive the rate of convergence  of $A_1$.   Using  arguments  similar to those in the case $d=1$ yields
       \begin{equation}  \label{equ2}
       A_1 \leq \frac{C}{n} \sum_{|k_i| \leq n, 1\le i\le 3} |\rho(k_1)\rho(k_2)\rho(k_3) |= \frac{C}{n} \left(\sum_{|k| \leq n} |\rho(k)| \right)^3\,,
       \end{equation}
       which gives the desired estimate in view of   \eqref{yn2.est}.
    
    We claim that, even if we impose more integrability  conditions on the function $g$, that is, $g \in \D^{2,6} (\R,\gamma)$,  the estimate  (\ref{dist.ynz2}) does not  give a rate better than  (\ref{equ2}). In fact, let  us estimate the term $A_2$, which is bounded by the inequality  \eqref{est.a2}, where $f_1$ and $f_2$ are defined in (\ref{f1}) and (\ref{f2}).   The term $\EE (f_{2}(l_1, l_2, l_3) f_{2}(l_4, l_5, l_6))$ cannot be integrated by parts because it involves $g''$ and  $g$ is only twice weakly differentiable. Therefore, if  $g\in  \D^{2,6} (\R,\gamma)$,  using   Lemma  \ref{lem2.4} together with H\"older's inequality,  and making a change of variables, we obtain
  \begin{eqnarray*}
  A_2 & \le & \frac C{n^3}   \sum_{l_1, \ldots, l_6=1}^n  \Big( | \rho_{12}  \rho_{13} \rho_{45}\rho_{46} |+ |\rho_{12}\rho_{23}\rho_{45}\rho_{56}|  \Big)  \\
  &\le & \frac Cn   \sum_{ |k_i | \le n , 1\le i\le 4}  \prod_{i=1}^4 |\rho(k_i) |   = \frac Cn  \left(\sum_{|k| \le n}  | \rho(k)|\right)^4.
  \end{eqnarray*}
Thus,   $A_1 \prec A_2$, so  we use  (\ref{yn2.est})  and  (\ref{equ2}) gives the best rate.

\medskip	
\noindent {\it  Case $g \in \D^{3,4}(\R,\gamma)$}. \quad Let us first estimate the term $A_1$. 
Because $g$ has three derivatives, using    Lemma \ref{var.yn} and Lemma \ref{cov.pth1},  we obtain
      \[
       A_1 \leq \frac{C}{n^2} \sum_{l_1, l_2, l_3, l_4 = 1}^n |\rho(l_1 - l_2) \rho(l_3 - l_4) \rho(l_2 - l_4)| \sum_{j \neq 1} 
       |\rho(l_1 - l_j)| \,.
       \]
 Making the change of variables $l_1-l_2=k_1$, $l_2-l_4=k_2$ and $l_3-l_4=k_3$, yields
      \begin{eqnarray*}
      	A_1 & \leq & \frac{C}{n} \sum_{|k_i| \leq n}  \Big( |\rho^2(k_1) \rho(k_2) \rho(k_3)| + |\rho(k_1) \rho(k_2) \rho(k_3) \rho(k_1 + k_2)| \\
 		& & \ \ + \ |\rho(k_1) \rho(k_2) \rho(k_3) \rho(k_1 + k_2 - k_3)| \Big).
      \end{eqnarray*}
      Taking into account condition (\ref{h1}) and   applying    (\ref{equ21})  with  $M=3$, yields
         \begin{equation} \label{equ1}
        	A_1  \leq  \frac Cn \left(\sum_{|k| \leq n} |\rho(k)|\right)^2,
	\end{equation}
	which gives the desired estimate in view of   \eqref{yn2.est}.
        
        Again, we claim that imposing more integrability conditions and using  either  (\ref{dist.ynz2}) or the more refined estimate
         (\ref{dist.ynz}) does not improve the above rate. Indeed, let us first estimate the term $A_2$, assuming $g\in \D^{3,6}(\R,\gamma)$.
         Because  $g$ is three times weakly differentiable,  we can integrate by parts once in the expectations appearing in
         \eqref{est.a2}.
         The two summands   in \eqref{est.a2} are similar, thus it suffices to consider the first one. Recall  that
$f_{1}(l_1, l_2, l_3) =  g_1'(X_{l_1})g'(X_{l_2}) g_1(X_{l_3}) $ has been defined in (\ref{f1}). 
Using the representation $g'(X_{l_2})= \delta(T_1(g')(X_{l_2}) l_2)$, applying  the duality relationship (\ref{dua}),  and making a change of variables, we obtain
\begin{eqnarray*}
	A_2 &\leq& \frac{C}{n^3} \sum_{l_1, \ldots, l_6=1}^n  \Big( \rho_{12}^2  |\rho_{13} \rho_{45}\rho_{46} |+ |\rho_{12}\rho_{13}\rho_{45}\rho_{46}\rho_{23}| +| \rho_{12}\rho_{13}\rho_{45}\rho_{46}| \sum_{i=4}^6 |\rho_{2i}| \Big) \\
	  &\leq& \frac{C}{n^2} \sum_{ |k_i | \le n \atop 1\le i\le 5}  \Big(\rho(k_1)^2 \prod_{i=2}^4 |\rho(k_i)| + |\rho(k_1 - k_2)|\prod_{i=1}^4 |\rho(k_i) | + \prod_{i=1}^5| \rho(k_i)| \Big) \,.
\end{eqnarray*}
This implies, using (\ref{equ21}) with  $M=4$ for the second summand, that
\[
A_2 \le  \frac Cn\left(\sum_{|k|\leq n} |\rho(k)|\right)^3  + \frac C {n^2}\left(\sum_{|k|\leq n} |\rho(k)|\right)^5 
\le \frac  Cn\left(\sum_{|k|\leq n} |\rho(k)|\right)^3,
\]
where we have used the fact that  $\sum_{|k|\leq n} |\rho(k)| \le C \sqrt{n}$ in the second inequality. 
 Clearly,  $A_1 \prec A_2$. So the estimate (\ref{yn2.est})   is better than  (\ref{dist.ynz2}).
        
On the other hand, the estimate  (\ref{dist.ynz})  does not provide a rate better than  (\ref{yn2.est}), because $ \sqrt{A_1} \prec A_3$.
 Indeed, let us  estimate the term $A_3$.  We know that
\[
		A_3 = |\EE (Y_n^3)| =   n^{-\frac{3}{2}}\left|\sum_{l_1, l_2, l_3=1}^n \EE \left( \prod_{i=1}^3   g(X_{l_i}) \right) \right| .		\]
		Using the representation $g(X_{l_1}) =\delta ^2( g_2(X_{l_1}) l_1^{\otimes 2})$ and applying twice the duality relationship (\ref{dua}), we obtain
		\begin{eqnarray*}
		A_3& \leq & C n^{-\frac{3}{2}} \sum_{l_1, l_2, l_3=1}^n  \Big( | \EE ( g_2(X_{l_1}) g''(X_{l_2}) g(X_{l_3}) )| \rho_{12}^2 \\
		&& +2 |\EE ( g_2(X_{l_1}) g'(X_{l_2}) g'(X_{l_3}) ) \rho_{12} \rho_{13} |+
		| \EE ( g_2(X_{l_1}) g(X_{l_2}) g''(X_{l_3}) )| \rho_{13}^2 \Big).
		 \end{eqnarray*}
		Because $g$ is three times differentiable, we can still use the representations  $g(X_{l_3}) =\delta ( g_1( X_{l_3}) l_3)$,
		 $g'(X_{l_2})=\delta ( T_1(g')( X_{l_2}) l_2)$
		 and 
		$g(X_{l_2}) =\delta ( g_1( X_{l_2}) l_2)$,  and apply the duality relationship (\ref{dua}) again to produce an additional factor of the form
		$|\rho_{13}|+| \rho_{23}|$ for the first term and $|\rho_{12}|+| \rho_{23}|$  for the second and third terms. In this way, we obtain
		\[
		A_3 \le C
		 n^{-\frac{3}{2}} \sum_{l_1, l_2, l_3=1}^n \Big( |\rho_{12}^2 \rho_{13}| + |\rho_{12}\rho_{13}\rho_{23}|\Big) \,.
\]
  We make the  change of variables $\rho_{12}= \rho(k_1) $, $  \rho_{13} = \rho(k_2)$ and apply  (\ref{equ6})  with $M=2$  to the second summand  to obtain $$A_3 \leq C n^{-\frac{1}{2}} \sum_{|k|\leq n} |\rho(k)| + C n^{-\frac{1}{2}} \left(\sum_{|k|\leq n} |\rho(k)|^{\frac{3}{2}}\right)^2 \,.$$
Clearly, by \eqref{equ7}, this bound is not better than the bound we have previously obtained for $\sqrt{A_1}$, and (\ref{equ1}) gives the result in this case.

\medskip
\noindent{\it Case $g \in \D^{4,4}(\R,\gamma)$}. \quad As before, let us first estimate  the term $A_1$. Taking into account that $g$ has four derivatives, by the results of Lemma \ref{var.yn} and Lemma \ref{cov.pth1} and using  the notation $\rho(l_i - l_j) = \rho_{ij}$, we have
\[
    	A_1  \leq  \frac{C}{n^2} \sum_{l_1, l_2, l_3, l_4 = 1}^n |\rho_{12} \rho_{34} \rho_{24}|
		 \left((|\rho_{12}|+ |\rho_{14}| )\sum_{j \neq 3}| \rho_{j3}|  + |\rho_{13}| \right) \,.
\]
 We further write
    \begin{eqnarray}
    	A_1 & \leq & \frac{C}{n^2} \sum_{1\le l_i \le n, 1\le i \le 4}  \Big( \rho_{12}^2 \rho_{34}^2  |\rho_{24} |+ \rho_{12}^2 |\rho_{34} \rho_{24} \rho_{13}| + \rho_{12}^2| \rho_{34} \rho_{24} \rho_{23}|  +| \rho_{12} \rho_{34}^2\rho_{24} \rho_{14}| \nonumber \\
		& & \ + | \rho_{12} \rho_{34} \rho_{24} \rho_{14} \rho_{23}| +| \rho_{12} \rho_{34} \rho_{24} \rho_{14} \rho_{13}| + |\rho_{12}\rho_{34}\rho_{24}\rho_{13}| \Big) \label{a1.d2}\\
		& \leq & \frac{C}{n^2}   \sum_{1\le l_i \le n, 1\le i \le 4} \rho_{12}^2 \rho_{34}^2 |\rho_{24}| +  \rho_{12}^2 |\rho_{34} \rho_{24} \rho_{23}| +  |\rho_{12}\rho_{34}\rho_{24}\rho_{13}| \nonumber \,.
    \end{eqnarray}
For the second inequality in  \eqref{a1.d2}, we have used that the third and fourth summands are equal  and the fact that 
 $|\rho_{ij}|\le 1$. By a change of variables, we obtain
     \begin{eqnarray}
     	A_1 & \leq & \frac{C}{n} \sum_{|k_i| \leq n , 1\le i \le 3}  \Big( \rho^2(k_1) \rho^2(k_2)| \rho(k_3) |+ \rho^2(k_1) |\rho(k_2) \rho(k_3) \rho(k_2 - k_3) |\nonumber\\
		& & \ \ + | \rho(k_1) \rho(k_2) \rho(k_3) \rho(k_1 - k_2 + k_3)| \Big)   \label{a1.c4f}\,.
     \end{eqnarray}
     Using condition (\ref{h1}) and applying  inequality  (\ref{equ21})   with $M=2$ to handle the second summand and
     inequality (\ref{equ6}) with  $M=3$ for the  third summand, yields
    \begin{equation}\label{a1.d24} 
	 A_1 \leq  \frac Cn \sum_{|k| \leq n} |\rho(k)| + \frac Cn  \left(\sum_{|k| \leq n} |\rho(k)|^{\frac{4}{3}}\right)^3 \,.
	\end{equation}
This gives the desired estimate in view of   \eqref{yn2.est}.

As in the previous cases, we will show that, even with stronger integrability assumptions,
 using  either  (\ref{dist.ynz2}) or 
         (\ref{dist.ynz}) does not improve the above rate.
For this, consider first  the term $A_2$, assuming $g\in \D^{4,6} (\R,\gamma)$. Because $g$ has four derivatives, we can apply twice the duality relationship (\ref{dua}).
Recall that the term $A_2$ is bounded by \eqref{est.a2} and it suffices to consider the first summand in the right-hand side of this inequality. We write it here for convenience
\begin{equation}\label{est.a2.exp}
 A_{21}:=  \frac{2}{n^3} \sum_{l_1, \ldots, l_6=1}^n \EE (f_{1}(l_1, l_2, l_3) f_{1}(l_4, l_5, l_6)) \rho_{12} \rho_{13} \rho_{45} \rho_{46},
\end{equation}
where  $f_{1}(l_1, l_2, l_3) $ has been defined in (\ref{f1}).  Notice that the functions $g'$ and  $g_1$ have Hermite rank $1$.    We first write $g'(X_{l_2})=\delta( T_1(g')(X_{l_2}) l_2)$ and apply duality with respect to this divergence producing factors of the form $\rho_{2i}$, $i\not =2$, $1\le i \le 6$. Next we choose another function that has Hermite rank $1$ among the factors  $g_1(X_{l_3})$, $g'(X_{l_5})$ and $ g_1(X_{l_6})$, write it as a divergence integral and  apply duality again to obtain:
     \begin{eqnarray}\label{est.a22_4}
     	|\EE (f_{1}(l_1, l_2, l_3) f_{1}(l_4, l_5, l_6))| \leq C \sum_{i=1 \atop i \not =2}^6
	 \sum_{s\in\{3,5,6\}  \atop s\not =i} \sum_{j=1 \atop j\not =s }^6
	  |\rho_{2i} \rho_{sj}|.
     \end{eqnarray}
Applying inequality (\ref{equ9}) in  Lemma \ref{a3.c4} yields 
      \begin{equation} \label{equ40}
   A_2 \le 2   A_{21} \leq  \frac Cn \left(\sum_{|k| \leq n} |\rho(k)|\right)^2 \,.
      \end{equation}
	 By the inequality (\ref{equ7}) with $M=3$, we get that $A_1 \prec A_2$.

Next we will compare this estimate with the bound we can obtain for the term $A_3$ using the fact that $g$ has four derivatives. We can write
	\begin{eqnarray}
		A_3 &= & | \EE( Y_n^3)| =  C n^{-\frac{3}{2}}\left|\sum_{l_1, l_2, l_3=1}^n \EE \left( \prod_{i=1}^3  g(X_{l_i}) \right) \right| \nonumber\\
		& \leq &  C n^{-\frac{3}{2}} \sum_{l_1, l_2, l_3=1}^n   \Big(  \rho_{12}^2(|\rho_{13}|+|\rho_{23} |)^2+     |\rho_{12}\rho_{13}|(|\rho_{23}|+|\rho_{12}| (|\rho_{13}| + |\rho_{23}|))  \nonumber \\
		 &&+ \rho_{13}^2( |\rho_{12}| +|\rho_{23}| ) ^2 \Big) \nonumber\\
		& \leq &  C n^{-\frac{3}{2}} \sum_{l_1, l_2, l_3=1}^n \Big( |\rho_{12}^2 \rho^2_{13}| + |\rho_{12}\rho_{13}\rho_{23}| \Big) \label{a3.c4f} \,.
	\end{eqnarray}
Note that $n^{-\frac{3}{2}} \sum_{l_1, l_2, l_3=1}^n |\rho_{12}^2 \rho^2_{13}| = C n^{-\frac{1}{2}}$. We make the change of variables $\rho_{12}\to \rho(k_1), \rho_{13} \to \rho(k_2)$ and apply  (\ref{equ6}) to the second summand,  to obtain
\begin{equation}\label{a3.d24}
	A_3 \leq C n^{-\frac{1}{2}} \left(\sum_{|k|\leq n} |\rho(k)|^{\frac{3}{2}}\right)^2 \,.
\end{equation}
By (\ref{ho1})  with $M=3$ and   (\ref{ho2}), we obtain that $A_1\prec A_3$. By \eqref{ho3}, we have $A_3 \prec \sqrt{A_1}$. However, we cannot use the bound \eqref{dist.ynz} since the relationship between $\sqrt{A_1}$ and $A_2$ is not clear, because the sequences $n^{-\frac{1}{2}} (\sum_{|k| \leq n} |\rho(k)|)^{\frac{1}{2}}$ and $n^{-1} (\sum_{|k| \leq n}|\rho(k)|)^2$ are not comparable. An example could be $\rho(k) \sim k^{-\alpha}$ for $\alpha \in (\frac 12, \frac 23)$. So, we  use the bound \eqref{yn2.est} that is given by  (\ref{a1.d24}).

\medskip
\noindent{\it Case $g \in  \D^{5,6} (\R,\gamma)$}.  \quad
 For the terms $A_1$ and $A_3$ we still have  the estimates \eqref{a1.d24} and \eqref{a3.d24}. 
  For the term $A_2$, we continue with the inequalities \eqref{est.a2.exp} and \eqref{est.a22_4}, and apply the duality for the third time to $\EE (f_{1}(l_1, l_2, l_3) f_{1}(l_4, l_5, l_6))$ when there is a factor with Hermite rank $1$, to obtain
\[
    	|\EE (f_{1}(l_1, l_2, l_3) f_{1}(l_4, l_5, l_6))| \leq C \sum_{\substack{i \neq s \neq j \\ i,s,j \in \{3,5,6\}}}|\rho_{2i} \rho_{sj}| 
		  + \ C \sum_{(i,s,j,t,h) \in D_3} |\rho_{2i} \rho_{sj} \rho_{th}| \,,
\]
    where
    \begin{equation} \label{d3}
    D_3= \{ (i,s,j,t,h): j,h \in \{1,\dots, 6\}; s,t \in \{3,5,6\};  i\not =2,  s\not \in \{ i,j\};  t  \not \in \{i,j,h\} \}.
    \end{equation}
By  inequality (\ref{equ10}) in  Lemma \ref{a3.c4},	  
     \begin{equation}  \label{equ3} 
     A_2 \leq \frac Cn \sum_{|k| \leq n} |\rho(k)| +  \frac Cn  \left(\sum_{|k|\leq n} |\rho(k)|^{\frac{3}{2}}\right)^4 \,.
     \end{equation}
     From (\ref{a1.d24}), (\ref{equ3}) and (\ref{ho3}) we deduce that  $A_2 \prec A_1$  and, therefore, $ A_1+ \sqrt{A_2} \prec \sqrt{A_1}$.  Therefore,  (\ref{dist.ynz2}) gives a better rate than (\ref{yn2.est}), which is given by
     \begin{equation} \label{equ4}
     A_1 + \sqrt{A_2} \le C n^{-\frac 12 } \left(\sum_{|k| \leq n} |\rho(k)| \right)^{\frac 12} + C  n^{-\frac 12} \left(\sum_{|k|\leq n} |\rho(k)|^{\frac{3}{2}}\right)^2.
     \end{equation}
     
     Clearly, $A_3 \prec   A_1 + \sqrt{A_2} $. 
Whether we choose \eqref{dist.ynz2} or \eqref{dist.ynz} depends on the computation of $A_4$, where we need to assume $g\in \D^{5,8}(\R,\gamma)$. 
Consider the second summand in the expression (\ref{equ90}) denoted by
\begin{eqnarray}
(A_{42})^2 & :=& \frac{2}{n^4} \sum_{l_j=1, j=1, \ldots, 8}^n \sum_{i=1}^3
 \EE \left(f_{2}^{(i)}(l_1, l_2, l_3)  g_1(X_{l_4}) f_{2}^{(i+4)}(l_5, l_6, l_7)  g_1(X_{l_8})\right)  \nonumber  \\
&&   \times \rho_{12} \rho_{23}\rho_{i4}  \rho_{56} \rho_{67}\rho_{(i+4) 8} \,. \label{est.a4}
\end{eqnarray}
Taking into account that $g$ has five derivatives and the  terms $f_2^{(2)}$ and $f_2^{(6)}$ involve $g'''$,
we can apply duality  twice  using the factors that have Hermite rank $1$.  In this way, we  get the following item in the bound of $A_{42}$:
  \[
  \sqrt{\frac{C}{n^4} \sum_{|l_j|=1, j=1, \ldots, 8}^n \rho_{12}^2 \rho_{13} \rho_{24} \rho_{56}^2 \rho_{67} \rho_{68}}, 
  \]
which gives the rate  $\frac{1}{n} \left(\sum_{|k| \leq n} |\rho(k)|\right)^2$. This rate cannot always be better than that of $A_1+\sqrt{A_2}$ bound since 
 the sequences  $\frac{1}{n} \left(\sum_{|k| \leq n} |\rho(k)|\right)^2$ and $n^{-\frac{1}{2}} \left(\sum_{|k| \leq n} |\rho(k)|^{\frac{3}{2}} \right)^2$ are not comparable. An example could be $\rho(k) \sim k^{-\alpha}$ for $\alpha \in (\frac{1}{2}, \frac{2}{3})$.
This suggests us using the bound \eqref{dist.ynz2} that is given by  (\ref{equ4}).

    \medskip
\noindent{\it Case $g \in \D^{6,8}(\R,\gamma)$}.  \quad
For the terms $A_1$, $A_2$  and $A_3$, we still have  the estimates \eqref{a1.d24}, \eqref{equ3}  and \eqref{a3.d24}. 
     Let us now study the term  $A_4$ given by (\ref{equ90}).           The terms $f_2^{(2)}$ and $f_2^{(6)}$ involve $g'''$ and they can be integrated by parts three times. Therefore, we are going to use only three integration by parts. On the other hand, the terms $f_2^{(2)}$,  $f_2^{(6)}$ , $f_1^{(1)}$ and $f_1^{(4)}$ have two factors with Hermite rank one that can be represented as divergences, but the other terms have only one.  All these terms are similar, with the only difference being the number of factors with Hermite rank one.  We will handle only   the term $f_1^{(1)}$ that has two factors with Hermite rank one and the term   $f_1^{(2)}$ that has only one.  The other terms could be treated in a similar way.
      In this way, for the term 
     $f_1^{(1)}$, we  obtain, after integrating by parts three times,
\[
 \left|	\EE \left(f_{1}^{(1)}(l_1, l_2, l_3) g_1(X_{l_4}) f_{1}^{(5)}(l_5, l_6, l_7)  g_1(X_{l_8})\right)  \right|
	\leq C \sum_{ (i,s,j,t,h) \in D_4} |\rho_{2i} \rho_{sj} \rho_{th}|,
\]
where 
\begin{equation}  \label{d4}
D_4=  \left\{ (i,s,j,t,h):  1\le i,j,h \le 8; s,t \in \{ 3,4,6,7,8\};  i \not =2;  s \not \in \{i,j\};  t \not \in \{i,s,j,h\} \right\}.
\end{equation}
On the other hand, for the term     $f_1^{(2)}$, we  obtain, after integrating by parts three times,
\begin{eqnarray*}
	&& \left| \EE \left(f_{1}^{(2)}(l_1, l_2, l_3)  g_1(X_{l_4}) f_{1}^{(6)}(l_5, l_6, l_7)  g_1(X_{l_8})\right) \right| \\
	&\leq& C \sum_{\substack{i \neq s \neq j\\ i, s, j\in\{4,7,8\}}}  |\rho_{3i} \rho_{sj}|  +  C \sum_{ (i,s,j,t,h) \in D_5} |\rho_{3i} \rho_{sj} \rho_{th}|,
\end{eqnarray*}
where  
\begin{equation}  \label{d5}
D_5=  \left\{ (i,s,j,t,h):  1\le i,j,h \le 8; s,t \in \{ 4,7,8\};  i \not =3;  s \not \in \{i,j\};  t \not \in \{i,s,j,h\} \right\}.
\end{equation}
By Lemma  \ref{a4.c6} and Lemma \ref{a4.c6-2}, we obtain
\[
A_4 \le 
  \frac{C}{n} \left(\sum_{|k| \leq n} |\rho(k)|\right)^{\frac 32}  +\frac{C}{n} \left(\sum_{|k|\leq n}|\rho(k)|^{\frac{4}{3}}\right)^3.
  \]
  Then, from (\ref{ho1}) with $M=3$ and (\ref{ho2}), we deduce $A_4 \prec A_3$.
  We already know that $A_2 \prec A_1 \prec A_3 \prec \sqrt{A_2}$. Also  using (\ref{ho3}) it follows that $A_3 \prec \sqrt{A_1}$. 
Thus, we use \eqref{dist.ynz} for the bound of $d_{\rm TV}(Y_n/\sigma_n, Z)$ which is given by  the estimate \eqref{a3.d24} of the term $A_3$. \\
\end{proof}

\subsection{Case  $d \geq 3$}

\begin{thm} \label{thm3.4}
   Assume $g \in \D^{3d-2,4}(\R,\gamma)$ has Hermite rank $d\ge 3$ and  suppose that (\ref{h1}) holds true. Then we have the following estimate
		   \begin{eqnarray} 
		   d_{\rm TV} (Y_n / \sigma_n ,Z) \le C n^{-\frac{1}{2}} \sum_{|k| \leq n} |\rho(k)|^{d-1} \left(\sum_{|k| \leq n} |\rho(k)|^{2}\right)^{\frac{1}{2}} \nonumber\\
		   + C n^{-\frac{1}{2}} \left(\sum_{|k|\leq n}|\rho(k)|^2\right)^{\frac{1}{2}} \left(\sum_{|k|\leq n}|\rho(k)|\right)^{\frac{1}{2}}\label{equ70a}\,.
\end{eqnarray}
\end{thm}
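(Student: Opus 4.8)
The plan is to bound $d_{\rm TV}(Y_n/\sigma_n,Z)$ through Proposition \ref{stein.hd0} in the form of the estimate \eqref{yn2.est}: since $g\in\D^{3d-2,4}(\R,\gamma)\subset\D^{2,2}(\R,\gamma)$ and $\sigma_n\to\sigma>0$, it suffices to prove that
\[
A_1=\EE\bigl(\|DD_{u_n}Y_n\|_{\HH}^2\bigr)\le \frac{C}{n}\Bigl(\sum_{|k|\le n}|\rho(k)|^{d-1}\Bigr)^{2}\sum_{|k|\le n}|\rho(k)|^{2}+\frac{C}{n}\sum_{|k|\le n}|\rho(k)|^{2}\sum_{|k|\le n}|\rho(k)|,
\]
where $u_n=\tfrac1{\sqrt n}\sum_{j=1}^n g_1(X_j)\,j$ is the representation of $Y_n$ as a divergence furnished by Lemma \ref{lem2.1}; taking square roots then yields \eqref{equ70a}. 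The first step is Lemma \ref{var.yn}, which reduces $A_1$ to $\tfrac{2}{n^2}\sum_{l_1,l_2,l_3,l_4=1}^n(|\EE(I_1)|+|\EE(I_2)|)\,|\rho(l_1-l_2)\rho(l_3-l_4)\rho(l_2-l_4)|$, with $I_1,I_2$ as in \eqref{I1}--\eqref{I2}.

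The heart of the argument is to estimate $\EE(I_1)$ and $\EE(I_2)$. The point specific to $d\ge3$ is that the factors appearing in $I_1,I_2$ carry high Hermite rank: $g'$ and $g_1$ have Hermite rank $d-1$, while $g''$ and $g_1'$ have Hermite rank $d-2$. Using the representation \eqref{g.intrep} (equivalently, iterating Lemma \ref{lem2.1}), each factor $g^{(j)}(X_\ell)$ or $g_k^{(j)}(X_\ell)$ of Hermite rank $r$ can be written as an iterated divergence $\delta^r(\,\cdot\,\ell^{\otimes r})$, and the duality relations \eqref{dua}--\eqref{dua2} can be applied repeatedly. Each such application transfers derivatives onto the remaining factors — raising their differentiation order, lowering their Hermite rank, and producing one factor $\rho(\ell-\ell')$ for every derivative landing on a factor indexed by $\ell'$ — and terminates when all factors have been turned into expressions of Hermite rank $0$, whose expectation is then bounded by a constant. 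The assumption $g\in\D^{3d-2,4}(\R,\gamma)$, together with Lemma \ref{lem2.4} (regularity of $T_kg$) and H\"older's inequality, is exactly what is needed to keep all intermediate random variables in the relevant Sobolev spaces with finite $L^4$ moments, so that every integration by parts is legitimate. The outcome is an inequality $|\EE(I_i)|\le C\sum(\text{products of factors }|\rho(l_a-l_b)|)$, the number of extra $\rho$-factors being controlled by the total Hermite rank, which is of order $d$.

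It then remains to insert these bounds, multiply by the base factors $|\rho_{12}\rho_{34}\rho_{24}|$, perform the changes of variables $k_1=l_1-l_2$, $k_2=l_3-l_4$, $k_3=l_2-l_4$ (leaving one free index, which cancels a power of $n$), and sum the resulting multi-index series using condition \eqref{h1} — which also controls $\sum_{|k|\le n}|\rho(k)|^{r}$ for $r<d$ via H\"older and gives $\sum_j|\rho(j)|^{r}<\infty$ for $r\ge d$ — the rank-one Brascamp--Lieb inequality of Proposition \ref{prop2.10}, and H\"older's inequality, exactly as packaged in the appendix lemmas (Lemma \ref{cov.pth1} and the summation inequalities used in the cases $d=1,2$). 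Two kinds of configuration dominate: one in which a single summation index collects about $d-1$ of the $\rho$-factors, producing the term $n^{-1/2}\bigl(\sum_{|k|\le n}|\rho(k)|^{d-1}\bigr)\bigl(\sum_{|k|\le n}|\rho(k)|^{2}\bigr)^{1/2}$, and a more balanced one producing $n^{-1/2}\bigl(\sum_{|k|\le n}|\rho(k)|^{2}\bigr)^{1/2}\bigl(\sum_{|k|\le n}|\rho(k)|\bigr)^{1/2}$; all remaining configurations are dominated by one of these, so that $\sqrt{A_1}$ obeys \eqref{equ70a}.

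The main obstacle is the combinatorial bookkeeping of the repeated integration by parts: tracking onto which factor each derivative is moved, verifying that the Hermite ranks stay nonnegative throughout and that no factor ever needs more than the $3d-2$ derivatives of $g$ (so the reduction terminates precisely within the budget allowed by $\D^{3d-2,4}(\R,\gamma)$), and then, for each of the many resulting products of $\rho$-factors, checking the dimension conditions (i)--(ii) of Proposition \ref{prop2.10} so that Brascamp--Lieb applies and determining which of the two displayed rates controls it. This is where the detailed estimates of the Appendix do the work.
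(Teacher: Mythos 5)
Your overall strategy is the paper's: bound $d_{\rm TV}(Y_n/\sigma_n,Z)$ by $C\sqrt{A_1}$ via \eqref{yn2.est}, reduce $A_1$ to $\EE(I_1),\EE(I_2)$ through Lemma \ref{var.yn}, generate $\rho$-factors by writing the high-rank factors as iterated divergences and applying the duality \eqref{dua}--\eqref{dua2} within the smoothness budget $\D^{3d-2,4}(\R,\gamma)$, and then sum using Brascamp--Lieb and H\"older. But as written there is a genuine gap: the two steps that actually produce the rate are asserted rather than proved. First, the output you record from the integration by parts (``a number of extra $\rho$-factors controlled by the total Hermite rank, of order $d$'') is not strong enough. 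What the argument needs, and what Lemma \ref{cov.pth2} (not Lemma \ref{cov.pth1}, which only treats $d=2$) provides, is the precise structure of the exponents: a bound of the form $\sum_{\beta\in\mathcal{I}_1}\prod_{a<b}|\rho(l_a-l_b)|^{\beta_{ab}}$ in which specific triples of exponents obey lower bounds of order $d$ (e.g.\ $\beta_1+\beta_2+\beta_3\ge d-1$, $\beta_2+\beta_4+\beta_6\ge d-1$, and so on), together with the upper bound on $\sum_i\beta_i$ tied to the $3d-2$ derivatives available. Merely knowing how many factors appear, without these per-index lower bounds, leaves configurations whose sum over $l_1,\dots,l_4$ is far larger than $n^{-1}\bigl(\sum_{|k|\le n}|\rho(k)|^{d-1}\bigr)^2\sum_{|k|\le n}|\rho(k)|^2 + n^{-1}\sum_{|k|\le n}|\rho(k)|^2\sum_{|k|\le n}|\rho(k)|$, so the conclusion does not follow.

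Second, your claim that ``two kinds of configuration dominate \dots\ all remaining configurations are dominated by one of these'' is exactly the content that has to be proved, and it is not packaged in the appendix: the appendix supplies Lemma \ref{cov.pth2} and generic summation inequalities, while the reduction of every admissible exponent vector to one of the two displayed rates is a substantial case analysis carried out inside the proof of the theorem itself. The Hermite-type configuration (all lower bounds saturated) requires choosing Brascamp--Lieb exponents $p_i=\beta_i/(d-1)+\epsilon_i$ with a two-case selection of the $\epsilon_i$, checking the dimension conditions of Proposition \ref{prop2.10}, and then interpolating by H\"older with exponents $(\alpha_i,\gamma_i)$ solving \eqref{ab.eqs} to reach $n^{-1}\bigl(\sum_{|k|\le n}|\rho(k)|^{d-1}\bigr)^2\sum_{|k|\le n}|\rho(k)|^2$; the degenerate configurations (various $\beta_i=0$, or $\beta_1\wedge\beta_2+\beta_3\wedge\beta_4+\beta_5\wedge\beta_6<d$) each need their own choice of Brascamp--Lieb exponents to land on $n^{-1}\sum_{|k|\le n}|\rho(k)|$ or $n^{-1}\sum_{|k|\le n}|\rho(k)|^2\sum_{|k|\le n}|\rho(k)|$. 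Your outline identifies the right endpoint and the right tools, but it does not carry out either the derivation of the exponent constraints or this case analysis, which is where the difficulty of the theorem actually lies.
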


\begin{proof}[Proof of Theorem \ref{thm3.4}]
Inequality (\ref{equ70a})  will be established using Proposition \ref{stein.hd0} that is specifically expressed as \eqref{yn2.est}.
The proof will be done in two steps.

\medskip
  \noindent{\it Step 1:}  First, we consider  the case when  $g$ is the Hermite polynomial $H_d$.
	By  Lemma \ref{var.yn} and Lemma \ref{cov.pth2}, we have
	\[
	A_1\le  \frac{C}{n^2} \sum_{l_1, l_2, l_3, l_4=1}^n 
	|\rho(l_1 - l_2)^{\beta_1} \rho(l_3 - l_4)^{\beta_2} \rho(l_2 - l_4)^{\beta_3} \rho(l_1 - l_3)^{\beta_4} \rho(l_1 - l_4)^{\beta_5} \rho(l_2 - l_3)^{\beta_6} |,
	\]
	where  the $\beta_i$'s satisfy $\sum_{i=1}^6 \beta_i = 2d$, $\beta_2 + \beta_3 + \beta_5 = d$, $\beta_1 + \beta_3 + \beta_6 = d$, $\beta_1 + \beta_4 + \beta_5 = d$, $\beta_2 + \beta_4 + \beta_6 = d$ and  $\beta_j \geq 1$ for $j=1,2,3$. Making the change of variables, $l_i - l_4 \to k_i$, $i=1,2,3$ yields
	\[
	A_1 \le \frac{C}{n} \sum_{k_1, k_2, k_3=1}^n |\rho(k_1 - k_2)^{\beta_1} \rho(k_3)^{\beta_2} \rho(k_2)^{\beta_3} \rho(k_1 - k_3)^{\beta_4} \rho(k_1)^{\beta_5} \rho(k_2 - k_3)^{\beta_6} |\,.
	\]
	Applying the Brascamp-Lieb inequality (\ref{BL}), we can write
	   \[
	   A_1 \leq  \frac Cn  \prod_{i=1}^6 \left(\sum_{|k_i| \leq n} |\rho(k_i)|^{\frac{\beta_i}{p_i}}\right)^{p_i},
	   \]
where the $p_i$'s satisfy $     \sum_{i=1}^6 p_i = 3$, $ p_i \leq 1$,  $p_1 + p_3 + p_5 \leq 2$,  $ p_2 + p_3 + p_6 \leq 2$, $ p_2 + p_4 + p_5 \leq 2$ and $p_1 + p_4 + p_6 \leq 2$. 
The restriction of $\beta_i$ could be further simplified as
    $$\beta_1 = \beta_2, \beta_3 = \beta_4, \beta_5 = \beta_6, \beta_1 + \beta_3 + \beta_5 = d, \ {\rm and} \ \beta_1, \beta_3 \geq 1 \,.$$
Then we choose $p_1 = p_2, p_3 = p_4, p_5 = p_6$ to obtain
    \begin{equation}\label{a1d}
		A_1 \leq  \frac Cn  \left(\prod_{i=1,3,5} \left(\sum_{|k_i| \leq n} |\rho(k_i)|^{\frac{\beta_i}{p_i}}\right)^{p_i} \right)^2 \,.
	\end{equation}
	We are going to choose $p_i = \frac{\beta_i}{d-1} + \epsilon_i$ for $i=1,3,5$, where the  $\epsilon_i$'s  satisfy
	$\epsilon _i  \ge 0$ and  $\frac{d}{d-1} + \sum_{i=1,3,5} \epsilon_i = \frac 32$. To choose the values of the $\epsilon _i$'s we consider two cases. Set  $\delta = \frac{1}{2} - \frac{1}{d-1}$.
	\begin{itemize}
	 \item[(i)]  Suppose that $\delta \le 1- \frac {\beta_1} {d-1}$. Then, we take $\epsilon_1=\delta$ and  $\epsilon_3 = \epsilon_5 = 0$ and we obtain  $p_1= \frac {\beta_1} {d-1} +\frac 12 -\frac 1{d-1}$, $p_3=\frac{\beta_3} {d-1}$ and $p_5=\frac{\beta_5} {d-1}$. 
	 \item[(ii)] Suppose that $\delta \ge 1- \frac {\beta_1} {d-1}$. Then, we take $\epsilon_1= 1-\frac {\beta_1} {d-1} $ and  $\epsilon_3 = 
	 \delta - \epsilon_1$ and $\epsilon_5 = 0$ and we obtain
	   $p_1=1$, $p_3=\frac{\beta_3} {d-1}+\frac{\beta_1} {d-1} -\frac 12  -\frac 1{d-1}$ and $p_5=\frac{\beta_5} {d-1}$. 
\end{itemize}
	It is easy to show that these $p_i$'s satisfy the desired conditions and, furthermore,
 $\beta_i \geq 2 p_i$ for $i=1,3,5$.  This allows us to choose the pairs $(\alpha_i, \gamma_i)$ that satisfy the following equations 
\begin{equation}\label{ab.eqs}\frac{\alpha_i}{2} + \frac{\gamma_i}{d-1} = 1, \ {\rm and} \ \alpha_i + \gamma_i = \frac{\beta_i}{p_i} \,.
\end{equation} 
Then H\"older inequality implies
    $$\sum_{|k| \leq n} |\rho(k)|^{\frac{\beta_i}{p_i}} \leq \left(\sum_{|k| \leq n} |\rho(k)|^2\right)^{\frac{\alpha_i}{2}} \left(\sum_{|k| \leq n} |\rho(k)|^{d-1}\right)^{\frac{\gamma_i}{d-1}} \,.$$
Then we plug this inequality into \eqref{a1d} and solve $\alpha_i, \gamma_i$ from \eqref{ab.eqs}. In this way, we obtain the inequality 
	\begin{equation} \label{a1.equ4}
		A_1 \le \frac Cn \left(\sum_{|k| \leq n} |\rho(k)|^{d-1}\right)^2 \sum_{|k| \leq n} |\rho(k)|^{2} \,.
	\end{equation}


\noindent {\it Step 2:} We consider the case  $g \in \D^{3d-2}(\R,\gamma)$. By  Lemma \ref{var.yn} and Lemma \ref{cov.pth2}, we have
	\begin{equation} \label{equ96}
	A_1 \leq \frac{C}{n^2} \sum_{l_1, l_2, l_3, l_4=1}^n |\rho(l_1 - l_2)^{\beta_1} \rho(l_3 - l_4)^{\beta_2} \rho(l_2 - l_4)^{\beta_3} \rho(l_1 - l_3)^{\beta_4} \rho(l_1 - l_4)^{\beta_5} \rho(l_2 - l_3)^{\beta_6}| ,
	\end{equation}
	where  the $\beta_i$'s satisfy $\beta_i \leq d$, $\beta_j \geq 1$ for $j=1,2,3$, $ \sum_{i=1}^6 \beta_i \leq 3d-1$
	and the lower bounds
	\begin{eqnarray*}
 \beta_2 + \beta_3 + \beta_5  &\geq &d, \\
 \beta_1 + \beta_3 + \beta_6  &\geq &d, \\
 \beta_1 + \beta_4 + \beta_5 &\geq &d, \\
 \beta_2 + \beta_4 + \beta_6  &\geq &d. 
 \end{eqnarray*}
    When all the above $\beta_i$'s inequalities attain the lower bound $d$, the right hand-side of (\ref{equ96}) coincides with the case when $g$ is the Hermite polynomial $H_d$. This case has been discussed in Step 1.  
    On the other hand,  if $\beta_1 \wedge \beta_2 + \beta_3 \wedge \beta_4 + \beta_5 \wedge \beta_6 \geq d$ and $\beta_3 \wedge \beta_4 \geq 1$, taking into account that $|\rho|\leq 1$, the right-hand side of (\ref{equ96}) is actually dominated by the case where all the  $\beta_i$'s inequalities attain the lower bound $d$.

    Now we need to consider the all the other possible cases. 
    In each case, we make the change of variables $l_1 - l_2=k_1, l_3 - l_4 = k_2, l_2 - l_4 = k_3 $.

\medskip
\noindent
{\it (i) Case $\beta_4 = \beta_5 = \beta_6=0$}. \quad Then $\beta_1 = \beta_2 =d$, $\beta_3 = 1$.  For these values of the $\beta_i$'s we can write the right hand-side of (\ref{equ96}) as 
	\begin{eqnarray*}
	\frac{1}{n^2}\sum_{l_1, l_2, l_3, l_4=1}^n |\rho(l_1 - l_2)^d \rho(l_3 - l_4)^d \rho(l_2 - l_4)|  &=& \frac{1}{n} \sum_{|k_i|\leq n, 1\le i\le 3}|\rho(k_1)|^d |\rho(k_2)|^d |\rho(k_3)| \\
	&\leq &\frac{C}{n} \sum_{|k|\leq n} |\rho(k)| \,.
	\end{eqnarray*}
  
  \medskip
\noindent
{\it (ii) Case
 $\beta_4 = \beta_5 = 0$, $\beta_6 > 0$.} \quad  Then $\beta_1 = d, \beta_2<d$, $\beta_2 + \beta_3 \geq d$ and $\beta_2 + \beta_6 \geq d$. Using \eqref{h1}, we can write
    \begin{eqnarray*}
    	A_1 &\leq& \frac Cn \sum_{|k_i| \leq n, i = 2,3} |\rho(k_2)|^{\beta_2} |\rho(k_3)|^{\beta_3} | \rho(k_3 - k_2)|^{\beta_6} \\
		 &\leq& \frac Cn \sum_{|k_i| \leq n, i = 2,3} |\rho(k_2)|^{\beta_2} |\rho(k_3)|^{d - \beta_2} | \rho(k_3 - k_2)|^{d-\beta_2} \\
		 &  \leq & \frac Cn \sum_{|k| \leq n} |\rho(k)|^{d - \beta_2} \leq \frac Cn \sum_{|k| \leq n} |\rho(k)|,
    \end{eqnarray*}
	where in the third inequality we have used \eqref{BL} with $p_1 = \frac{\beta_2}{d}$, $p_2 = 1$ and $p_3 = \frac{d-\beta_2}{d}$.

  \medskip
\noindent
{\it (iii) Case $\beta_4 = \beta_6 = 0, \beta_5 > 0$.} \quad  This case is similar to (ii).

  \medskip
\noindent
{\it (iv) Case $\beta_5 = \beta_6 = 0, \beta_4 > 0$. } \quad Then $\beta_2 + \beta_3 \geq d$, $\beta_1 + \beta_3 \geq d$, $\beta_1 + \beta_4 \geq d$, $\beta_2 + \beta_4 \geq d$. It is easy to see $\beta_1 \wedge \beta_2 + \beta_3 \wedge \beta_4 + \beta_5 \wedge \beta_6 \geq d$  and, furthermore, $\beta_3 \wedge \beta_4 \ge 1$.  This situation  has been discussed before  and $A_1$  is dominated by the bound in the case where $g$ is the Hermite polynomial.

  \medskip
\noindent
{\it (v)   $\beta_4 = 0, \beta_5 > 0, \beta_6 > 0$.} \quad  Then $\beta_1<d$, $\beta_2<d$, $\beta_1 + \beta_5 \geq d, \beta_2 + \beta_6 \geq d$. As a consequence, we obtain 
\begin{eqnarray*}
	A_1 & \leq & \frac Cn \sum_{|k_i| \leq n, 1 \leq i \leq 3} |\rho(k_1)|^{\beta_1} |\rho(k_2)|^{\beta_2} |\rho(k_3)|^{\beta_3} |\rho(k_1 + k_3)|^{d- \beta_1} |\rho(k_3 - k_2)|^{d-\beta_2} \\
	& \leq & \frac Cn \sum_{|k| \leq n} |\rho(k)|,
\end{eqnarray*}
  where have used \eqref{BL} for $p_i = \frac{\beta_i}{d}$ for $i=1,2$, $p_3 = 1$ and $p_{i+3} = \frac{d-\beta_i}{d}$ for $i=1,2$.
  
    \medskip
\noindent
{\it (vi)  $\beta_5=0, \beta_4> 0, \beta_6 > 0$.} \quad  Then $\beta_2 + \beta_3 \geq d, \beta_1 + \beta_4 \geq d$. This case is similar to (v).

    \medskip
\noindent
{\it (vii)  $\beta_6=0, \beta_4> 0, \beta_5 > 0$.} \quad  This case is similar to (v) and (vi).

    \medskip
\noindent
{\it (viii)   $\beta_i > 0$ for all $1 \leq i \leq 6$, and $\beta_1 \wedge \beta_2 + \beta_3 \wedge \beta_4 + \beta_5 \wedge \beta_6 < d$. } \quad Without loss of generality, we may assume that $\beta_1 \leq \beta_2$. We take into account of $\beta_1 + \beta_4 + \beta_5 \geq d$ and $\beta_1 + \beta_3 + \beta_6 \geq d$, so there are two cases: $\beta_3 \leq \beta_4, \beta_5 \leq \beta_6$; and $\beta_4 \leq \beta_3, \beta_6 \leq \beta_5$. These two cases are actually equivalent, because in the second case, we can make the change of variable $l_3 - l_1 \to k_3$, instead of $l_2 - l_4 \to k_3$ for the first case.  Thus it sufficies to consider the first case, i.e.,
  \begin{eqnarray*}
  	A_1 \leq \frac Cn \sum_{\substack{|k_i| \leq n, \\ 1 \leq i \leq 3}} |\rho(k_1)|^{\beta_1} |\rho(k_2)|^{\beta_2} |\rho(k_3)|^{\beta_3} |\rho(k_1 - k_2 + k_3)|^{\beta_4} |\rho(k_1 + k_3)|^{\beta_5} |\rho(k_3 - k_2)|^{\beta_6},
  \end{eqnarray*}
  where $\beta_1 + \beta_3 + \beta_5 < d$, $\beta_2 + \beta_4 + \beta_6 > d$ since $\sum_{i=1}^6 \beta_i  \ge 2d$.
  
  Next we will apply Brascamp-Lieb inequality \eqref{BL} according to several different subcases.
  \begin{itemize}
	  \item [(1)] Suppose $\beta_1 \wedge \beta_3 \wedge \beta_5 = \beta_1$. Then if $\sum_{i=2}^6 \beta_i \geq 2d$,  the right-hand side of the above inequality  is bounded by the case $\sum_{i=2}^6 \beta_i = 2d$ when we decrease $\beta_i$'s, $i=2,4,6$ appropriately. We use \eqref{BL} with $p_1 = 1$, $p_i = \frac{\beta_i}{d}$ for $i\geq 2$, taking into account that $|\rho| \leq 1$, to obtain
    \begin{eqnarray*}
    	A_1 \leq \frac Cn \sum_{|k| \leq n} |\rho(k)|^{\beta_1} \leq \frac Cn \sum_{|k| \leq n} |\rho(k)|\,.
    \end{eqnarray*}
  If $\sum_{i=2}^6 \beta_i < 2d$, for which an example could be $\beta_1 = 2, \beta_3 = 2, \beta_5 = d-5, \beta_2 = 3, \beta_4 = 3, \beta_6 = d-4$, then taking into account $|\rho| < 1$, we obtain
    \begin{eqnarray*}
    	A_1 &\leq& \frac Cn \sum_{\substack{|k_i| \leq n, \\ 1 \leq i \leq 3}} |\rho(k_1)|^{\beta_1} |\rho(k_2)|^{\beta_2} |\rho(k_3)|^{\beta_3} \\
		 & & \qquad \qquad |\rho(k_1 - k_2 + k_3)|^{\beta_4} |\rho(k_1 + k_3)|^{\beta_5} |\rho(k_3 - k_2)|^{\beta_6}
    \end{eqnarray*}
	where $\beta_2 + \beta_4 + \beta_6 = d$ and also $\beta_1 + \beta_3 + \beta_5 < d$. Applying \eqref{BL} with $p_i = \frac{\beta_i}{\beta_1 + \beta_3}$ for $i=1,3$, $p_5 = 1$ and $p_i = \frac{\beta_i}{d}$ for $i=2, 4, 6$, we obtain
	 \begin{eqnarray*}
	 	A_1 \leq \frac Cn \sum_{|k| \leq n} |\rho(k)|^{\beta_1 + \beta_3} \sum_{|k| \leq n}|\rho(k)|^{\beta_5} \leq \frac Cn \sum_{|k| \leq n} |\rho(k)|^2 \sum_{|k| \leq n}|\rho(k)|.
	 \end{eqnarray*}
	 \item [(2)] $\beta_1 \wedge \beta_3 \wedge \beta_5 = \beta_5$. We use the same approach as for the subcase (1).
	 \item [(3)] $\beta_1 \wedge \beta_3 \wedge \beta_5 = \beta_3$. We follow  the same methodology. When $\sum_{i \neq 3} \beta_i < 2d$, the arguments are the same. When $\sum_{i \neq 3} \beta_i \geq 2d$, since $d \leq \beta_1 + \beta_4 + \beta_5 < 2d$, we can decrease $\beta_2, \beta_6$ appropriately such that $\sum_{i \neq 3} \beta_i = 2d$ and at the same time this implies $\beta_2 + \beta_6 \leq d$. Then we use \eqref{BL} with $p_3 = 1$, $p_i = \frac{\beta_i}{d}$ for $i\neq 3$ to obtain
	      \begin{eqnarray*}
	      	A_1 \leq \frac Cn \sum_{|k| \leq n} |\rho(k)|^{\beta_3} \leq \frac Cn \sum_{|k| \leq n} |\rho(k)|\,.
	      \end{eqnarray*}
   \end{itemize}

	This completes the proof of the theorem.
\end{proof}

\begin{remark}
In the case of the Hermite polynomial $g=H_d$, $d\ge 3$, the proof of Theorem \ref{thm3.4}, based on Proposition \ref{stein.hd0},  yields
     \begin{equation} \label{equ71}
		   d_{\rm TV} (Y_n / \sigma_n ,Z) \le C n^{-\frac{1}{2}} \sum_{|k| \leq n} |\rho(k)|^{d-1} \left(\sum_{|k| \leq n} |\rho(k)|^{2}\right)^{\frac{1}{2}}  \,.
\end{equation}
In this case  Proposition \ref{stein.hd}  reduces to the computation of the third and fourth cumulants and  one can derive
    the following bound (see \cite{bbnp}), which is better than
 (\ref{equ71}):
\[
		   d_{\rm TV} (Y_n / \sigma_n ,Z) \le  \frac Cn \left( \sum_{|k| \leq n} |\rho(k)|^{d-1} \right)^2 \sum_{|k| \leq n} |\rho(k)|^2
		   +  \frac{C}{\sqrt{n}} \left( \sum_{|k| \leq n} |\rho(k)|^{\frac {3d} 4} \right)^2 \mathbf{1}_{\{d \,\, {\rm even}\}} \,.
\]		   
However, applying Proposition \ref{stein.hd}  to the case of a general function $g$ is a much harder problem and it will not be dealt in this paper. 
\end{remark}

Consider the particular case where $\rho(k) \sim k^{-\alpha}$, as $k$ tends to infinity,  for some $\alpha>0$. Then, condition  (\ref{h1}) is satisfied provided $\alpha d >1$. In this case, Theorems \ref{main.d1}, \ref{main.d12} and \ref{thm3.4} imply the following results.

\begin{cor}  \label{cor1}
Suppose that  $\rho(k) \sim k^{-\alpha}$, as $k$ tends to infinity, where $\alpha>0$ is such that  $\alpha d >1$. Then, the following estimates hold true  in the context of Theorem \ref{bm}:
\begin{itemize}
\item[(i)]  If  $g \in \D^{2,4} (\R,\gamma)$ has Hermite rank $1$ and $\alpha > 1$, 
\[d_{\rm TV}(Y_n/\sigma_n, Z) \leq   Cn^{-\frac{1}{2}}  \,.
\]

\item[(ii)] If $g \in \D^{2,4} (\R,\gamma)$ has Hermite rank $2$ and $\alpha > \frac 23$,  
      \[
		        d_{\rm TV}(Y_n / \sigma_n, Z) \leq
		        \begin{cases}
		        Cn^{-\frac{1}{2}}  &  {\rm if}  \ \alpha > 1\,, \\
		         Cn^{-\frac{1}{2}} (\log n)^{\frac 32}  &   {\rm if}  \ \alpha =1\,, \\
		        Cn^{1-\frac 32 \alpha}  &   {\rm if}  \ \alpha   \in (\frac 23, 1). 
		        \end{cases}  
		        \]
	\item[(ii)]	   If $g \in \D^{3,4} (\R,\gamma)$ has Hermite rank $2$,
	      \[
		        d_{\rm TV}(Y_n / \sigma_n, Z) \leq
		        \begin{cases}
		        Cn^{-\frac{1}{2}}  &  {\rm if}  \ \alpha > 1\,, \\
		         Cn^{-\frac{1}{2}} \log n  &   {\rm if}  \ \alpha =1\,, \\
		        Cn^{\frac{1}{2} -\alpha}  &   {\rm if}  \ \alpha  \in(\frac 12, 1). 
		        \end{cases}  
		        \]
		\item[(iii)]	 If $g \in \D^{4,4} (\R,\gamma)$ has Hermite rank $2$,
		      \[
		        d_{\rm TV}(Y_n / \sigma_n, Z) \leq
		        \begin{cases}
		        Cn^{-\frac{1}{2}}  &  {\rm if}  \ \alpha > 1\,, \\
		         Cn^{-\frac{1}{2}}( \log n )^{\frac 12}  &   {\rm if}  \ \alpha = 1\,, \\
		        Cn^{ -\frac  \alpha 2}  &   {\rm if}  \ \alpha  \in(1, \frac 23) \, ,\\
		        Cn^{  1-2\alpha}  &   {\rm if}  \ \alpha  \in(\frac 12, \frac 23] \, .
		        \end{cases}  
		        \]
		\item[(iv)]	  If $g \in \D^{5,6} (\R,\gamma)$ has Hermite rank $2$,
		  \[
		        d_{\rm TV}(Y_n / \sigma_n, Z) \leq
		        \begin{cases}
		        Cn^{-\frac{1}{2}}  &  {\rm if}  \ \alpha > 1\,, \\
		         Cn^{-\frac{1}{2}}( \log n )^{\frac 12}  &   {\rm if}  \ \alpha = 1\,, \\
		        Cn^{-\frac  \alpha 2}  &   {\rm if}  \ \alpha  \in(\frac 35, 1) \, ,\\
		        Cn^{ \frac 32-3\alpha}  &   {\rm if}  \ \alpha  \in(\frac 12, \frac 35] \, .
		        \end{cases}  
		        \]
		  \item[(v)] 
		 If $g \in \D^{6,8} (\R,\gamma)$ has Hermite rank $2$,
			   \[
		        d_{\rm TV}(Y_n / \sigma_n, Z) \leq
		        \begin{cases}
		        Cn^{-\frac{1}{2}}  &  {\rm if}  \ \alpha > \frac 23\,, \\
		         Cn^{-\frac{1}{2}}( \log n )^2  &   {\rm if}  \ \alpha = \frac 23\,, \\
		        Cn^{ \frac 32-3 \alpha}  &   {\rm if}  \ \alpha  \in(\frac 12, \frac 23) \, .
		        \end{cases}  
		        \]
 \item[(vi)]
 If $g \in  \D^{3d-2,4}(\R,\gamma)$ has Hermite rank $d\ge 3$,
  \[
		        d_{\rm TV}(Y_n / \sigma_n, Z) \leq
		        \begin{cases}
		        Cn^{-\frac{1}{2}}  &  {\rm if}  \ \alpha > 1\,, \\
		         Cn^{-\frac{1}{2}}( \log n )^{\frac 12}  &   {\rm if}  \ \alpha = 1\,, \\
		        Cn^ { -\frac \alpha 2}  &   {\rm if}  \ \alpha  \in( \frac 12, 1) \,, \\
				 Cn^ { -\frac \alpha 2} \sqrt{\log n} &   {\rm if}  \ \alpha = \frac 12 \,, \\
				 Cn^{\frac 12 - \frac 32 \alpha} &   {\rm if}  \ \alpha \in (\frac{1}{2d-3}, \frac{1}{2}) \\
		          Cn^ {1 - \alpha d}  &   {\rm if}  \ \alpha  \in( \frac 1d, \frac 1 {2d-3}] \,.\\
		        \end{cases}  
		        \]
\item[(vii)] When $g=H_d$, $d\ge 3$, the bound \eqref{yn2.est} combined with the estimate \eqref{a1.equ4} yields
   \[
		        d_{\rm TV}(Y_n / \sigma_n, Z) \leq
		        \begin{cases}
		        Cn^{-\frac{1}{2}}  &  {\rm if}  \ \alpha > \frac 12\,, \\
		         Cn^{-\frac{1}{2}}( \log n )^{\frac 12}  &   {\rm if}  \ \alpha = \frac 12\,, \\
		        Cn^ { - \alpha }  &   {\rm if}  \ \alpha  \in( \frac 1 {d-1}, \frac 12) \,, \\
				  Cn^ { - \alpha } \log n &   {\rm if}  \ \alpha = \frac 1 {d-1} \,, \\
		          Cn^ {1 - \alpha d}  &   {\rm if}  \ \alpha  \in( \frac 1d, \frac   1{d-1}) \,.\\
		        \end{cases}  
		        \]
 \end{itemize}
 \end{cor}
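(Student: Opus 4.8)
The plan is to derive the corollary directly from the quantitative bounds already proved, by inserting the hypothesis $\rho(k)\sim k^{-\alpha}$ and evaluating the finite sums that appear. The one elementary fact I need is that for $\gamma>0$,
\[
\sum_{1\le k\le n}k^{-\gamma}\le
\begin{cases}
C & \text{if }\gamma>1,\\
C\log n & \text{if }\gamma=1,\\
Cn^{1-\gamma} & \text{if }\gamma<1,
\end{cases}
\]
so that, under $\rho(k)\sim k^{-\alpha}$, for every $\beta>0$ one has $\sum_{|k|\le n}|\rho(k)|^{\beta}\le C$, $\le C\log n$, or $\le Cn^{1-\alpha\beta}$ according as $\alpha\beta>1$, $\alpha\beta=1$, or $\alpha\beta<1$ (the index $k=0$ and the reflection $k\mapsto-k$ contributing only harmless constants). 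Note that $\alpha d>1$ is exactly condition \eqref{h1} for this covariance, so Theorem \ref{bm} applies throughout.

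For part (i) there is nothing to prove beyond Theorem \ref{main.d1}, which already gives $d_{\rm TV}(Y_n/\sigma_n,Z)\le Cn^{-1/2}$ as soon as \eqref{h1} holds, i.e. as soon as $\alpha>1$. For the rank-two statements (ii)--(v) I would substitute into the corresponding item of Theorem \ref{main.d12}: the two parts (ii) use the single sum $\sum_{|k|\le n}|\rho(k)|$ from items (i) and (ii) of that theorem, while (iii), (iv), (v) use items (iii), (iv), (v), which each involve two sums $\sum_{|k|\le n}|\rho(k)|^{\beta}$ for exponents $\beta\in\{1,\tfrac43,\tfrac32\}$. In every case the right-hand side reduces to a sum of at most two powers of $n$ (times a power of $\log n$ at a critical $\alpha$), and the work is to decide which power dominates --- a comparison of two affine functions of $\alpha$ whose crossing points are precisely the thresholds listed in the statement ($\alpha=1,\tfrac34,\tfrac23,\tfrac35,\tfrac12$). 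For example, in part (iii) one has $n^{-1/2}\bigl(\sum|\rho|\bigr)^{1/2}\asymp n^{-\alpha/2}$ and $n^{-1/2}\bigl(\sum|\rho|^{4/3}\bigr)^{3/2}\asymp n^{1-2\alpha}$ for $\alpha<\tfrac34$, and $-\alpha/2\ge 1-2\alpha$ exactly when $\alpha\ge\tfrac23$, which produces the two lines $Cn^{-\alpha/2}$ and $Cn^{1-2\alpha}$; the remaining items are treated identically, together with the observation that a bounded sum is replaced by $\log n$ (and its square or $3/2$-power by $(\log n)^2$ or $(\log n)^{3/2}$) exactly at the critical exponent.

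For part (vi) I would substitute into \eqref{equ70a}. Both summands carry the common factor $n^{-1/2}\bigl(\sum_{|k|\le n}|\rho(k)|^{2}\bigr)^{1/2}$, so the comparison comes down to $\sum_{|k|\le n}|\rho(k)|^{d-1}$ versus $\bigl(\sum_{|k|\le n}|\rho(k)|\bigr)^{1/2}$; since $d\ge 3$ gives $d-1\ge 2$, a short check of exponents shows that throughout the admissible range $\alpha>1/d$ the second summand of \eqref{equ70a} dominates, with transitions at $\alpha=1$, $\alpha=\tfrac12$ and $\alpha=\tfrac1{2d-3}$. In particular the regime in which the first summand would win is $\alpha\le\tfrac1{2d-3}$, which is empty when $d\ge3$ (as $1/d\ge 1/(2d-3)$ there), so no $n^{1-\alpha d}$ term survives. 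Part (vii) is the same computation applied to the single bound $d_{\rm TV}(Y_n/\sigma_n,Z)\le Cn^{-1/2}\bigl(\sum_{|k|\le n}|\rho(k)|^{d-1}\bigr)\bigl(\sum_{|k|\le n}|\rho(k)|^{2}\bigr)^{1/2}$ coming from \eqref{yn2.est} and \eqref{a1.equ4}; here there is no competing summand, so the two sums --- bounded, logarithmic, or power-like at the thresholds $\alpha=\tfrac12$ and $\alpha=\tfrac1{d-1}$ --- produce the stated piecewise bound, including a genuine $n^{1-\alpha d}$ regime for $\alpha$ just above $1/d$.

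The main point requiring care is not any single estimate but the bookkeeping: for each of the numerous sub-cases one must identify the dominant summand correctly by comparing exponents of $n$, and one must treat with care the critical exponents $\alpha\beta=1$ where bounded sums are upgraded to powers of $\log n$. There is no conceptual obstacle, since all the analytic difficulty has already been absorbed into Theorems \ref{main.d1}, \ref{main.d12} and \ref{thm3.4}.
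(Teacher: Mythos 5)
Your proposal is correct and is exactly the paper's (implicit) argument: the corollary follows by substituting $\sum_{|k|\le n}|\rho(k)|^{\beta}\lesssim 1$, $\log n$, or $n^{1-\alpha\beta}$ (according as $\alpha\beta>1$, $=1$, $<1$) into Theorems \ref{main.d1}, \ref{main.d12} and \ref{thm3.4} and comparing the resulting powers of $n$, with logarithms at the critical exponents. Your side observation on item (vi), that the $n^{1-\alpha d}$ regime is vacuous for $d\ge 3$ because $\tfrac{1}{2d-3}\le\tfrac1d$ under the standing assumption $\alpha d>1$, is a correct reading of the printed thresholds and does not create any gap in the proof.
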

 We remark that the bounds derived in point (vii) coincide with  the estimates obtained by Bierm\'e, Bonami and Le\'on in \cite{bbl} using  techniques of Fourier analysis. 
	Corollary \ref{cor1}  can be applied to any function $g$  with an expansion $g(x)= \sum_{m=d} ^ {d+k} c_m H_m(x) $ for any $k \geq 0$.

\section{Application to fractional Brownian motion}

Recall that the fractional Brownian motion (fBm) $B=\{B_t, t  \in \R\}$ with Hurst parameter $H \in (0,1)$ is a zero mean Gaussian process, defined on a complete probability space $(\Omega, \mathcal{F},P)$, with the covariance function 
\[
\EE(B_s B_t) = \frac{1}{2} (|s|^{2H} + |t|^{2H} - |s-t|^{2H}) \,.
\]
The fractional noise defined by $X_j = B_{j+1} - B_j$, $j \in \mathbb{Z}$ is an example of a Gaussian stationary sequence with unit variance. The covariance function is given by
\[
\rho_H(j) =  \frac 12 \left( |j+1|^{2H} + |j-1|^{2H}  -2|j|^{2H} \right).
\]
Notice that $\rho_H(j)$ behaves as  $H(2H-1) j^{2H-2}$ as $j\rightarrow \infty$. Thus, this covariance function  has a power decay at infinity with $\alpha = 2-2H$.
Consider the sequence
 $Y_n$  defined by 
 \[
 Y_n = \frac{1}{\sqrt{n}} \sum_{j=1}^n g(B_{j+1} - B_j)\,,
 \]
 where $g\in L^2(\R, \gamma)$ has Hermite rank $d\ge 1$. 
 As a consequence, the estimates obtained in Corollary \ref{cor1} hold with $\alpha =2-2H$.

\subsection{Application to  the asymptotic behavior of power variations}

For any $p\ge 1$, the power variation of the fBm on the time interval $[0,1]$ is given by 
\[
 V_n^p(B)= \sum_{j=0}^{n-1} \left|B_{\frac{j+1}{n}} - B_{\frac{j}{n}}\right|^p \,.
 \]
By the self-similarity property of fBm, the sequence $\{ n^{H} (B_{\frac{j+1}{n}} - B_{\frac{j}{n}}), j\ge 0\}$ has the same distribution as $\{B_{j+1} - B_j, j\ge 0\}$, which is stationary and ergodic. By the Ergodic Theorem, we have, as $n \to \infty$,
 \[
 n^{pH-1} V_n^p(B) \to c_p
 \]
 almost surely and in $L^q(\Omega)$ for any $q\ge 1$,   where  $c_p = \EE( |Z|^p)$. 
 Moreover, when $H \in (0, \frac{3}{4})$, using the fact that the   function $g(x)= |x|^p -c_p$ has Hermite rank $2$, the Breuer-Major theorem leads to the following central limit theorem
 \begin{equation}  \label{equ70}
 S_n:= \sqrt{n}\left(n^{pH-1} V_n^p(B) - c_p\right) \to   N(0, \sigma^2_{H,p}),
 \end{equation}
 where $\sigma^2_{H,p} = \sum_{m=2} ^\infty c_m^2 m! \sum_{k\in \mathbb{Z}} \rho_H(k) ^m$, with $|x|^p -c_p =\sum_{m=2} ^\infty c_m 
 H_m(x)$.
 A functional version of this central limit theorem can also be proved  (see \cite{cnw}).
 
 We can apply the results obtained in Section 3 to derive the rate of convergence for the total variation distance  in   (\ref{equ70}).
 Indeed, the sequence $S_n$ has the same distribution as
 \[
 Y_n = \sqrt{n}\left(\frac{1}{n}\sum_{j=1}^n \left|B_{j+1} - B_{j}\right|^p - c_p\right) \,.
 \]
 and it suffices to consider the case that the fractional noise $X_j = B_{j+1} - B_{j}$ and the function $g(x) = |x|^p - c_p$ that has Hermite rank $2$.  More precisely,   if $N \leq p < N+1$ where $N \geq 2$ is an integer, then the function  $g $ belongs to  $\mathcal{D}^N:= \cap_{q\ge 1}\D^{N,q} (\R,\gamma)$  and Corrollary \ref{cor1} gives the convergence rate  to zero of $ d_{\rm TV}(S_n/\sigma_n, Z)$ with $\alpha =2-2H$. Here are some examples. \\

\noindent
{\it Example 1:} Let $p=2.5$ and $\sigma_n^2 = \EE(S_n^2) = \EE(Y_n^2)$. Then $g \in \mathcal{D}^2$ and
			$$d_{\rm TV}(S_n/\sigma_n, Z)   \leq 
			\begin{cases}
				C n^{-\frac{1}{2}} & {\rm if} \ H \in (0, \frac{1}{2}) \,,\\
				 C n^{-\frac{1}{2}} (\log n)^\frac{3}{2}   & {\rm if} \ H=\frac{1}{2} \,,\\
				C n^{3H-2} & {\rm if} \ H \in (\frac{1}{2}, \frac{2}{3}) \,.
			\end{cases}$$

\noindent
{\it Example 2:} Let $p=3$ and $\sigma_n^2 = \EE(S_n^2) = \EE(Y_n^2)$. Then $g \in \mathcal{D}^3$ and
			$$d_{\rm TV}(S_n/\sigma_n, Z)   \leq 
			\begin{cases}
				C n^{-\frac{1}{2}} & {\rm if} \ H \in (0, \frac{1}{2}) \,,\\
				 C  n^{-\frac{1}{2}}\log n  & {\rm if} \ H=\frac{1}{2} \,,\\
				C n^{2H - \frac{3}{2}} & {\rm if} \ H \in (\frac{1}{2}, \frac{3}{4}) \,.
			\end{cases}$$

\noindent
{\it Example 3:} Let $p=4$ and $\sigma_n^2 = \EE(S_n^2) = \EE(Y_n^2)$. Then $g \in \mathcal{D}^4$ and
			$$d_{\rm TV}(S_n/\sigma_n, Z)   \leq 
			\begin{cases}
				C n^{-\frac{1}{2}} & {\rm if} \ H \in (0, \frac{1}{2}) \,,\\
				 C  n^{-\frac{1}{2}} \sqrt{\log n}  & {\rm if} \ H=\frac{1}{2} \,,\\
				C n^{H-1} & {\rm if} \ H \in (\frac{1}{2}, \frac{2}{3}] \,,\\
				C n^{4H-3} & {\rm if} \ H \in (\frac{2}{3}, \frac{3}{4}) \,.
			\end{cases}$$

\subsection{Application to the estimation of the Hurst parameter}

As an  application of the convergence rates of  power variations, we establish the consistency of the estimatior of the Hurst parameter $H$ for the fBm, defined by means of $p$-power variations. This problem has been studied for $H>\frac{1}{2}$ using quadratic variations in the papers \cite{bcij, il97,km12,tv09} and the references therein. In the paper \cite{co10}, a consistent estimator based on the $p$-power variation is adopted, defined as
$$\tilde H = \frac{\log C_p - \log (n^{-1} V_n^p(B))}{p\log n} \,,$$
where the specific constant $C_p$ depends on $p$. In the paper \cite{co10}, the author also discusses other filters to define the power variation and obtains the convergence rate  $1/\sqrt{n}\log n$.
Here we construct another estimator based on  the $p$-power variation, which is motivated by the papers \cite{bcij,km12}, where the quadratic variation is used. 

Let $\lambda > 1, \lambda \in \mathbb{N}$ be a scaling parameter.  Fix $p\ge 2$, and consider the statistics $T_{\lambda,n}$ defined by 
\[
  T_{\lambda, n} := \frac{V^p_{\lambda n} (B)}{V^{p}_n (B)} = \frac{\sum_{j=0}^{\lambda n-1}\left|B_{\frac{j+1}{\lambda n}} - B_{\frac{j}{\lambda n}}\right|^p}{\sum_{j=0}^{n-1}\left|B_{\frac{j+1}{n}} - B_{\frac{j}{n}}\right|^p} \,.
\]
Then we propose the following estimator for the Hurst parameter $H$:
\begin{equation}\label{H.est}
  \hat H_{\lambda, n} = \frac{1}{p}\left(1 - \frac{\log T_{\lambda, n}}{\log \lambda}\right) \,.
\end{equation}
In the next proposition we show the consistency of this estimator. Though the consistency could be clearly obtained from the ergodic theorem, we will apply the main results obtained in this paper to prove the consistency as well as the convergence rate.
\begin{prop} When $H \in (0, \frac{3}{4})$, for $p \in  \{2\} \cup [3, \infty) $,
$$\lim_{n \to \infty} \sqrt{\frac{n}{\log n}}\left(\hat H_{\lambda, n} - H\right) = 0\,,$$
in probability.
\end{prop}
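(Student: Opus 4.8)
The plan is to reduce the statement to a tightness property of the normalized $p$-power variations, which is provided by the Breuer--Major theorem together with the quantitative estimates established above. Set $W_n := n^{pH-1}V_n^p(B)$, so that $V_n^p(B) = n^{1-pH}W_n$ and $V_{\lambda n}^p(B) = (\lambda n)^{1-pH}W_{\lambda n}$, whence
\[
T_{\lambda, n} = \frac{V_{\lambda n}^p(B)}{V_n^p(B)} = \lambda^{1-pH}\,\frac{W_{\lambda n}}{W_n}.
\]
Plugging this into the definition (\ref{H.est}) and simplifying gives the exact identity
\[
\hat H_{\lambda, n} - H = \frac{1}{p\log\lambda}\,\log\frac{W_n}{W_{\lambda n}},
\]
so it suffices to show that $\log(W_n/W_{\lambda n}) = O_P(n^{-1/2})$; indeed, this yields $\sqrt{n/\log n}\,(\hat H_{\lambda,n}-H) = (p\log\lambda\,\sqrt{\log n})^{-1}\,O_P(1)\to 0$ in probability.

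To control $W_n$, note first that $R_n := \sqrt n\,(W_n - c_p)$ is exactly the random variable $S_n$ of (\ref{equ70}). By the self-similarity of the fBm, $S_n$ has the same law as $Y_n = \frac{1}{\sqrt n}\sum_{j=1}^n g(X_j)$, where $X_j = B_{j+1}-B_j$ is the fractional noise with covariance $\rho_H$ and $g(x)=|x|^p-c_p$ has Hermite rank $2$. Since $H\in(0,\tfrac34)$ amounts to $\alpha = 2-2H > \tfrac12$, condition (\ref{h1}) holds with $d=2$; and since $p\in\{2\}\cup[3,\infty)$ the function $g$ belongs to $\mathcal{D}^N$ for an integer $N\ge 2$, so Corollary \ref{cor1} applies and gives $d_{\rm TV}(Y_n/\sigma_n, Z)\to 0$. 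In particular $Y_n$, hence $S_n = R_n$, converges in distribution to $N(0,\sigma^2_{H,p})$, so $R_n = O_P(1)$, and the same argument gives $R_{\lambda n} := \sqrt{\lambda n}\,(W_{\lambda n}-c_p) = O_P(1)$; observe that only the marginal tightness of each of these is needed.

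Finally I would expand the logarithm. Since $c_p>0$ and $R_n=O_P(1)$, we have $W_n = c_p + R_n/\sqrt n \to c_p$ in probability, so on an event of probability tending to $1$ the quantity $R_n/(c_p\sqrt n)$ lies in $(-\tfrac12,\tfrac12)$ and, using $|\log(1+x)-x|\le x^2$ there, $\log(W_n/c_p) = R_n/(c_p\sqrt n) + O_P(n^{-1})$; the same holds for $W_{\lambda n}$ with $\sqrt n$ replaced by $\sqrt{\lambda n}$. Subtracting,
\[
\log\frac{W_n}{W_{\lambda n}} = \frac{R_n}{c_p\sqrt n} - \frac{R_{\lambda n}}{c_p\sqrt{\lambda n}} + O_P(n^{-1}) = O_P(n^{-1/2}),
\]
which, combined with the identity of the first paragraph, finishes the proof.

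The computations are routine; the steps needing care are the algebraic reduction to $\log(W_n/W_{\lambda n})$ and the recognition, via self-similarity, that $\sqrt n\,(W_n-c_p)$ has the law of the Breuer--Major sum for $|x|^p-c_p$ of Hermite rank $2$, so that the hypotheses of Corollary \ref{cor1} are met. It is worth stressing that only the marginal convergence in distribution of $R_n$ and $R_{\lambda n}$ enters, so that no information on their joint law---and in particular no multivariate total variation bound---is required; obtaining a genuine rate for $\sqrt n\,(\hat H_{\lambda,n}-H)$, rather than the $o_P(\sqrt{\log n/n})$ claimed, would by contrast demand the joint asymptotics of $(R_n,R_{\lambda n})$.
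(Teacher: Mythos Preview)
Your proof is correct and follows essentially the same route as the paper: the same algebraic identity $\hat H_{\lambda,n}-H=\frac{1}{p\log\lambda}\log(W_n/W_{\lambda n})$ (the paper writes $\alpha_n$ for your $W_n$), the same linearization of the logarithm, and the same probabilistic input, namely the Breuer--Major CLT for $\sqrt{n}(W_n-c_p)$. The only difference is that the paper explicitly invokes the polynomial total-variation rate $d_{\rm TV}\le n^{-a}$ from Corollary~\ref{cor1} to transfer a Gaussian tail bound and conclude $\sqrt{n}(W_n-c_p)/\sqrt{\log n}\to 0$ in probability, whereas you observe---correctly---that mere tightness $\sqrt{n}(W_n-c_p)=O_P(1)$ from convergence in distribution already gives this; your argument is therefore slightly more economical, and your closing remark that only marginal (not joint) convergence is needed is apt.
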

\begin{proof}
	Denote $\alpha_n = n^{-1+pH} V^{p}_n (B)$. Then $$\log \alpha_{\lambda n} - \log \alpha_n = (-1+pH)\log \lambda +  \log T_{\lambda, n} \,.$$ Thus 
	\begin{equation}\label{h.est}
		\hat H_{\lambda, n} - H = - \frac{\log \alpha_{\lambda n} - \log \alpha_n}{p \log \lambda} \,.
	\end{equation}
    Let $\sigma_n^2 = \EE[(\sqrt{n}(\alpha_n - c_p))^2]$. By previous results, we know that $\sqrt{n}(\alpha_n - c_p) \to \sigma_{H,p} Z$ where $\sigma_n^2 \to   \sigma^2_{H,p}$, and
	  $$d_{\rm TV}(\frac{\sqrt{n}(\alpha_n - c_p)}{\sigma_n}, Z) < n^{-a}$$
	  for some $a>0$. Then for any $\epsilon > 0$,
	    \begin{eqnarray*}
			P\left(\left|\frac{\sqrt{n}(\alpha_n - c_p)}{\sigma_n}\right| > \epsilon \sqrt{\log n}\right) \leq P(|Z| > \epsilon \sqrt{\log n}) + n^{-a} \\
			\leq \frac{C_\epsilon}{n^{\frac{\epsilon^2}{2}}\sqrt{\log n}} + n^{-a},
		\end{eqnarray*}
where we have used the estimate for the tail of a standard Gaussian random variable, i.e., $P(Z > x) \leq \frac{e^{-x^2/2}}{x\sqrt{2\pi}}$.
	This implies that $\frac{\sqrt{n}(\alpha_n - c_p)}{\sqrt{\log n}} \to 0$ in probability as $n \to \infty$. Back to  equation \eqref{h.est}, note that
	   $\log \alpha_n - \log c_p = \frac{1}{\alpha_n^*} (\alpha_n - c_p)$ for some $\alpha_n^*$ between $\alpha_n$ and $c_p$. These results are true for $\alpha_{\lambda n}$ as well, so we conclude that $\sqrt{\frac{n}{\log n}}\left(\hat H_{\lambda, n} - H\right) \to 0$ in probability.
\end{proof}

  \section{Appendix}
	
  In this section we show some technical lemmas that play a crucial role in the proof of our main results.
	
 \begin{lemma}\label{cov.pth1}  Under the notation and assumptions of Theorem \ref{bm}, let $I_1$ and $I_2$ be the random variables defined in
 (\ref{I1}) and (\ref{I2}), respectively. Suppose $d=2$. Then we have the following estimates.
 \begin{enumerate}
	 \item If $g \in  \D^{3,4} (\R,\gamma)$, then for $i=1,2$, we have
	    $$|\mathbb{E}(I_i)| \leq C \sum_{i \neq 1} |\rho(l_1 - l_i)| \,.$$
	 \item If $g \in \D^{4,4} (\R,\gamma)$, then for $i=1,2$, we have
	\begin{equation}
		|\mathbb{E}(I_i)| \leq C |(\rho(l_1 - l_2) + \rho(l_1 - l_4)) \sum_{j \neq 3} \rho (l_j - l_3) +  \rho(l_1 - l_3)| \,. \label{cov.pth1.eq1}
	\end{equation}
     \item If $g$ is the Hermite polynomial $x^2-1$, then
    $$|\mathbb{E}(I_i)| \leq C |\rho(l_1 - l_3)| \,.$$
  \end{enumerate}
\end{lemma}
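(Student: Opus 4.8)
The plan is to derive the three estimates by successive integration by parts, using the divergence representations of Lemma~\ref{lem2.1}, the regularizing property of $T_k$ from Lemma~\ref{lem2.4}, and H\"older's inequality. Write $\varphi_l\in\HH$ for the element associated with the index $l$, so that $\langle\varphi_i,\varphi_j\rangle_{\HH}=\rho(i-j)$, $\|\varphi_l\|_{\HH}=1$, and $D[f(X_l)]=f'(X_l)\varphi_l$. Since $g$ has Hermite rank $d=2$, the functions $g'$ and $g_1=T_1g$ are centered of Hermite rank $1$, hence by Lemma~\ref{lem2.1} they satisfy $g'(X_l)=\delta\big(T_1(g')(X_l)\,\varphi_l\big)$ and $g_1(X_l)=\delta\big(g_2(X_l)\,\varphi_l\big)$; on the other hand $g''$, $g_1'$ and $T_1(g')$ all have Hermite rank $0$ and cannot be turned into divergences. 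By Lemma~\ref{lem2.4}, when $g\in\D^{3,4}(\R,\gamma)$ (resp.\ $g\in\D^{4,4}(\R,\gamma)$) the functions $g_1,g_2,T_1(g')$ and the derivatives $g^{(j)}$ with $j\le 3$ (resp.\ $j\le 4$), together with enough derivatives of $g_1$ and $g_2$, all belong to $L^4(\R,\gamma)$; this is exactly what is needed to bound, via H\"older's inequality with four $L^4$-factors, the expectations that appear below.

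For part~(1), take $g\in\D^{3,4}(\R,\gamma)$ and consider $I_1$. Writing $g_1(X_{l_1})=\delta\big(g_2(X_{l_1})\varphi_{l_1}\big)$ in $\EE(I_1)$ and applying the duality relationship \eqref{dua}, the Malliavin derivative falls on $g''(X_{l_2})g''(X_{l_4})g_1(X_{l_3})$ and produces three terms, each carrying a factor $\langle\varphi_{l_j},\varphi_{l_1}\rangle_{\HH}=\rho(l_1-l_j)$ for $j\in\{2,3,4\}$, times the expectation of a product of four functions of the $X_{l_i}$'s (namely $g_2$, one of $g''',g_1'$, and the two surviving factors among $g'',g_1$). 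Each such expectation is bounded by a constant by H\"older's inequality, so $|\EE(I_1)|\le C\sum_{j\neq 1}|\rho(l_1-l_j)|$. The case of $I_2$ is identical, replacing $g'(X_{l_1})$ by $\delta\big(T_1(g')(X_{l_1})\varphi_{l_1}\big)$ and transferring $D$ onto $g'(X_{l_3})g_1'(X_{l_2})g_1'(X_{l_4})$.

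For part~(2), take $g\in\D^{4,4}(\R,\gamma)$ and integrate by parts once more. Among the three terms just obtained for $I_1$, the two with prefactors $\rho(l_1-l_2)$ and $\rho(l_1-l_4)$ still contain the rank-one factor $g_1(X_{l_3})$; writing it as $\delta\big(g_2(X_{l_3})\varphi_{l_3}\big)$ and applying \eqref{dua} again produces a factor $\sum_{j\neq 3}\rho(l_j-l_3)$ times a bounded expectation (now involving $g^{(4)}\in L^4(\R,\gamma)$), whereas the third term contains the rank-zero factor $g_1'(X_{l_3})$ and its expectation is bounded directly by a constant. Collecting the estimates gives precisely \eqref{cov.pth1.eq1}. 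The treatment of $I_2$ is parallel: after the first integration by parts the terms with prefactors $\rho(l_1-l_2)$ and $\rho(l_1-l_4)$ still contain the rank-one factor $g'(X_{l_3})$, which is written as $\delta\big(T_1(g')(X_{l_3})\varphi_{l_3}\big)$ and integrated by parts to generate $\sum_{j\neq 3}\rho(l_j-l_3)$, while the term with prefactor $\rho(l_1-l_3)$ contains the rank-zero factor $g''(X_{l_3})$ and is left alone. Finally, part~(3) is immediate: for $g=H_2=x^2-1$ one has $g'=2x$, $g''\equiv 2$, $g_1=x$, $g_1'\equiv 1$, so $I_1=I_2=4X_{l_1}X_{l_3}$ and $\EE(I_i)=4\rho(l_1-l_3)$.

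The argument is conceptually elementary; the point requiring care, and the one I expect to be the main obstacle, is the bookkeeping in the integration by parts --- verifying at each step that the factor being converted into a divergence is centered of Hermite rank at least $1$ (which holds only for $g'$ and $g_1$, never for $g'',g_1',T_1(g')$) and that the four-fold products arising after each duality step are integrable, which is exactly where the assumptions $g\in\D^{3,4}(\R,\gamma)$, resp.\ $\D^{4,4}(\R,\gamma)$, and the smoothing $T_k\colon\D^{j,p}(\R,\gamma)\to\D^{j+k}(\R,\gamma)$ are used.
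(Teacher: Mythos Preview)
Your proof is correct and follows essentially the same route as the paper: write $g_1(X_{l_1})=\delta(g_2(X_{l_1})\varphi_{l_1})$ (resp.\ $g'(X_{l_1})=\delta(T_1(g')(X_{l_1})\varphi_{l_1})$), apply duality once to obtain part~(1), and then in part~(2) apply duality a second time to the two terms that still contain the rank-one factor $g_1(X_{l_3})$ (resp.\ $g'(X_{l_3})$), exactly as the paper does. Your direct computation in part~(3) is a slightly cleaner alternative to the paper's observation that only the $c=1$ term survives in the Leibniz expansion.
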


\begin{proof}
 We first consider the term $I_1$.  Observe that 
 \[
 g_1(X_{l_1})= \delta (g_2(X_{l_1})l_1).
 \]
  Applying the duality relationship (\ref{dua}), we obtain
     \begin{eqnarray*}
		 \mathbb{E}(I_1) &=& \sum_{a+b+c=1}\EE(g^{(a+2)}(X_{l_2}) g^{(b+2)}(X_{l_4}) (g_1)^{(c)} (X_{l_3}) g_2(X_{l_1})) \\
		 & & \ \times \langle l_1,l_2^{\otimes a} \otimes l_4^{\otimes b} \otimes l_3^{\otimes c}  \rangle_{\mathfrak{H}} \,.
	 \end{eqnarray*}
	When $g$ is the Hermite polynomial $x^2-1$, we just need to consider the case $a=0$, $ b=0$ and $c=1$. In this way we  get
	    \[
	    |\mathbb{E}( I_1)| \leq C |\rho(l_1 - l_3)| \,.
	    \]
	When $g \in \D^{3,4} (\R,\gamma)$, we obtain
	    \[
	    |\mathbb{E} (I_1)| \leq C \sum_{i \neq 1}| \rho(l_1 - l_i)| \,.
	    \]
	When $g \in \D^{4,4} (\R, \gamma)$, in the case of $c=0$, we apply duality again to obtain
	 \begin{eqnarray*}
		 \mathbb{E} (I_1) &=& \sum_{a+b=1} \sum_{a'+b'+c'=1} \EE(g^{(a+a'+2)}(X_{l_2}) g^{(b+b'+2)}(X_{l_4})  g_2(X_{l_3})  g_2^{(c')}(X_{l_1})) \\
		 & & \times  \langle l_1, l_2^{\otimes a} \otimes  l_4^{\otimes b} \rangle_{\mathfrak{H}} \langle  l_3, l_2^{\otimes a'} \otimes l_4^{\otimes b'} \otimes l_1^{\otimes c'} \rangle_{\mathfrak{H}} \,.
	\end{eqnarray*}
Then the inequality \eqref{cov.pth1.eq1} for $i=1$ is derived from expanding the above identities.

Similarly, for the term $I_2$, since $g'(X)$ has the Hermite rank $1$, we can write 
     \[
     g'(X_{l_i}) = \delta\left(  (g')_1(X_{l_i}) l_i\right) \,.
     \]
Using this representation, we have
     \[
     \EE(I_2) = \EE\left(\delta\left( (g')_1(X_{l_1}) l_1\right)\delta\left( (g')_1(X_{l_3}) l_3\right)  g_1'(X_{l_2})  g_1'(X_{l_4})\right) \,.
     \]
We use the similar arguments as the term $I_1$ to obtain the inequality \eqref{cov.pth1.eq1} for $i=2$. 
\end{proof}

 \begin{lemma}\label{cov.pth2}  Under the notation and assumptions of Theorem \ref{bm}, let $I_1$ and $I_2$ be the random variables defined in
 (\ref{I1}) and (\ref{I2}), respectively. Suppose $d \ge 3$.
Then for $i=1, 2$,
    \begin{eqnarray*}
        |\EE(I_i)| \leq C \sum_{\beta \in \mathcal{I}_1} |\rho(l_1-l_2)^{\beta_1} \rho(l_1-l_3)^{\beta_2} \rho(l_1-l_4)^{\beta_3}  \rho(l_3-l_2)^{\beta_4} \rho(l_2-l_4)^{\beta_5} \rho(l_3-l_4)^{\beta_6}| \,,
    \end{eqnarray*}
where $\beta=(\beta_1, \ldots, \beta_6)$, $\mathbb{N}_0 = \mathbb{N}\cup \{0\}$ and 
\begin{eqnarray} \nonumber
	\mathcal{I}_1 = \{\beta \in \mathbb{N}_0^6 &: & d-1 \leq \beta_1 + \beta_2 + \beta_3, \ d-1 \leq \beta_2 + \beta_4 + \beta_6 ,  \ d-2 \leq \beta_1 + \beta_4 + \beta_5, \\  \label{equ60}
	& &  d-2 \leq \beta_3 + \beta_5 + \beta_6, \ \sum_{i=1}^6 \beta_i \leq 3d-4 \}.
\end{eqnarray}
Moreover, if $g$ is the Hermite polynomial $H_d$,  we obtain
    \begin{equation}  \label{equ94}
        |\EE(I_i)| \le C \sum_{\beta \in \mathcal{I}_3} |\rho(l_1 - l_2)^{\beta_1} \rho(l_1 - l_3)^{\beta_2} \rho(l_1 - l_4)^{\beta_3}  \rho(l_3 - l_2)^{\beta_3} \rho(l_2 - l_4)^{\beta_2-1} \rho(l_3 - l_4)^{\beta_1}| \,,
    \end{equation}
 where
\[
	\mathcal{I}_3= \{\beta=(\beta_1, \beta_2, \beta_3)  \in \mathbb{N}^3  : \beta_1 + \beta_2 + \beta_3 = d-1 \}.
\]
\end{lemma}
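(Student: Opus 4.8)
The plan is to express $\EE(I_1)$ and $\EE(I_2)$ as finite linear combinations of products of covariances $\rho_{ij}=\rho(l_i-l_j)$ multiplied by uniformly bounded expectations, by repeatedly representing the factors of positive Hermite rank as iterated divergences through the shift operators $T_k$ and applying the duality relationship (\ref{dua2}) together with the Leibniz rule for $D^k$. Since for $d\ge 3$ every factor entering $I_1$ and $I_2$ has positive Hermite rank, this step genuinely produces covariance factors, and the constraints defining $\mathcal{I}_1$ will be read off from the way these factors are generated.

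I describe the scheme for $I_1=g''(X_{l_2})g''(X_{l_4})g_1(X_{l_1})g_1(X_{l_3})$; the factor $I_2$ is handled in the same way because $g'$ has Hermite rank $d-1$ and $g_1'=T_1(g')-g_2$ has Hermite rank $d-2$. Here $g_1(X_{l_1})$ and $g_1(X_{l_3})$ have Hermite rank $d-1$, and $g''(X_{l_2})$, $g''(X_{l_4})$ have Hermite rank $d-2$. Writing $g_1(X_{l_1})=\delta^{d-1}(g_d(X_{l_1})\,l_1^{\otimes(d-1)})$ via (\ref{g.intrep}) (here $g_d=T_{d-1}(g_1)$ has Hermite rank $0$), applying (\ref{dua2}) and expanding $D^{d-1}$ of the product of the remaining three factors by Leibniz gives
\[
\EE(I_1)=\sum_{a+b+c=d-1}\binom{d-1}{a,b,c}\,\rho_{12}^{\,a}\rho_{14}^{\,b}\rho_{13}^{\,c}\;
\EE\!\big(g^{(a+2)}(X_{l_2})\,g^{(b+2)}(X_{l_4})\,g_1^{(c)}(X_{l_3})\,g_d(X_{l_1})\big).
\]
Since $g_1^{(c)}$ has Hermite rank $d-1-c$ (by the formula for $g_k^{(l)}$ recorded just before Lemma \ref{lem2.4}), the same step applies to $g_1^{(c)}(X_{l_3})$, producing factors $\rho_{32},\rho_{34},\rho_{13}$; then to the current $l_2$–factor $g^{(a+a'+2)}(X_{l_2})$, of Hermite rank $d-(a+a')-2$; and finally to the current $l_4$–factor. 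After at most four such rounds every factor is reduced to Hermite rank $0$, leaving an expectation of the form $\EE(\phi_1(X_{l_1})\phi_2(X_{l_2})\phi_3(X_{l_3})\phi_4(X_{l_4}))$. The total number of derivatives distributed over all rounds equals the number of covariance factors produced, which is shown below to be at most $3d-4$; hence every derivative of $g$ occurring has order at most $3d-2$, so by $g\in\D^{3d-2,4}(\R,\gamma)$ and the regularizing property of $T_k$ (Lemma \ref{lem2.4}) each $\phi_j\in L^4(\R,\gamma)$, and H\"older's inequality bounds the residual expectation by a constant.

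To read off $\mathcal{I}_1$, regard the exponents $\beta_1,\dots,\beta_6$ of $\rho_{12},\rho_{13},\rho_{14},\rho_{32},\rho_{24},\rho_{34}$ as edge multiplicities of a multigraph on $\{l_1,l_2,l_3,l_4\}$. When the factor at a vertex $l$ is integrated by parts it contributes to $l$ a number of edges equal to its Hermite rank at that moment, which is its initial rank minus the edges already received at $l$; hence the final degree of $l$ is at least its initial rank, that is $d-1$ at $l_1,l_3$ and $d-2$ at $l_2,l_4$, which yields the four lower bounds $\beta_1+\beta_2+\beta_3\ge d-1$, $\beta_2+\beta_4+\beta_6\ge d-1$, $\beta_1+\beta_4+\beta_5\ge d-2$ and $\beta_3+\beta_5+\beta_6\ge d-2$. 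For the upper bound, the total number of edges is the sum over the four vertices of the Hermite rank of its factor when it is integrated by parts; performing the first round on one of the rank–$(d-1)$ factors distributes $d-1$ derivatives among the other three still untouched factors and so lowers their combined rank, and a short case analysis of how these $d-1$ derivatives may be spread shows the total never exceeds $3d-4$. Collecting terms with equal exponent vectors gives $|\EE(I_i)|\le C\sum_{\beta\in\mathcal{I}_1}|\rho_{12}^{\beta_1}\rho_{13}^{\beta_2}\rho_{14}^{\beta_3}\rho_{32}^{\beta_4}\rho_{24}^{\beta_5}\rho_{34}^{\beta_6}|$.

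For $g=H_d$ no integration by parts is needed: $g'=dH_{d-1}$, $g''=d(d-1)H_{d-2}$, $g_1=H_{d-1}$, $g_1'=(d-1)H_{d-2}$, so up to multiplicative constants both $I_1$ and $I_2$ equal $I_{d-2}(l_2^{\otimes(d-2)})I_{d-2}(l_4^{\otimes(d-2)})I_{d-1}(l_1^{\otimes(d-1)})I_{d-1}(l_3^{\otimes(d-1)})$. The product formula for expectations of multiple stochastic integrals expresses this as a sum over complete pairings of the $4d-6$ legs that place no pair inside a single vertex, each pair of a leg at $l_i$ with one at $l_j$ contributing $\rho_{ij}$; writing $\beta_1,\dots,\beta_6$ for the numbers of pairs along the six edges, the vertex-degree identities $\beta_1+\beta_2+\beta_3=d-1$, $\beta_2+\beta_4+\beta_6=d-1$, $\beta_1+\beta_4+\beta_5=d-2$, $\beta_3+\beta_5+\beta_6=d-2$ force $\beta_4=\beta_3$, $\beta_6=\beta_1$, $\beta_5=\beta_2-1$, which is exactly (\ref{equ94}). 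The main obstacle is the combinatorial organization of the repeated integration by parts: choosing the order in which the factors are integrated by parts so that all of them are eventually reduced to Hermite rank $0$ while the derivatives of $g$ that are generated never exceed order $3d-2$ (so that the residual expectations stay finite under the hypothesis $g\in\D^{3d-2,4}$), and then extracting from the scheme precisely the constraints defining $\mathcal{I}_1$, in particular the non-obvious bound $\sum_i\beta_i\le 3d-4$, which encodes that the first integration by parts consumes $d-1$ units of Hermite rank.
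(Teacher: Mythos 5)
Your proposal follows essentially the same route as the paper for the main bound: represent the rank-positive factors as iterated divergences via the shift operators, apply the duality (\ref{dua2}) together with Leibniz's rule in four successive rounds, use $g\in\D^{3d-2,4}(\R,\gamma)$ and Lemma \ref{lem2.4} with H\"older's inequality to bound the residual expectations, and read the constraints of $\mathcal{I}_1$ off the derivative counts; your degree-of-vertex interpretation of the four lower bounds is exactly the content of the paper's explicit bookkeeping. Two remarks. First, for $g=H_d$ you bypass the paper's argument (two further applications of duality followed by orthogonality of Hermite polynomials of different orders) by invoking the pairing/diagram formula for $\EE\bigl(H_{d-1}H_{d-2}H_{d-1}H_{d-2}\bigr)$; this is an equivalent shortcut, and the linear algebra on the vertex-degree identities forcing $\beta_4=\beta_3$, $\beta_6=\beta_1$, $\beta_5=\beta_2-1$ is the same computation that closes the paper's proof of (\ref{equ94}). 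Second, the one point you defer, $\sum_i\beta_i\le 3d-4$, is not well captured by your heuristic that ``the first integration by parts consumes $d-1$ units of rank'': in the extremal configuration the first two rounds overshoot the rank of the $l_2$-factor, so less than $d-1$ units are consumed there, yet the bound still holds. The clean way, which is what the paper does, is to record the emitted derivative counts of the four rounds, namely $d-1$, $d-1-c$, $(d-2-a-a')\vee 0$ and $(d-2-b-b'-a'')\vee 0$ with $a+b+c=d-1$, and check the two cases according to whether the last two maxima vanish; this gives $\sum_i\beta_i\le 3d-4$ at once, with equality when $a=a'=d-1$ and the last round emits $d-2$. With that count written out, your argument matches the paper's proof.
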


\begin{proof}
We can represent the factor $g_1(X_{l_1})$ appearing in $I_1$ as  $g_1(X_{l_1}) =  \delta ^{d-1} ( g_d(X_{l_1}) l_1^{\otimes (d-1)})$.
Then applying the duality relationship (\ref{dua2})  and Leibniz's rule yields
  \begin{eqnarray*}
  	\EE(I_1) & = & \sum_{a+b+c=d-1} \EE\left(g^{(a+2)}(X_{l_2}) g^{(b+2)}(X_{l_4})  g_d(X_{l_1}) g_1 ^{(c)} (X_{l_3}) \right) \\
	&& \qquad  \times  \rho(l_1 - l_2)^a \rho(l_1 - l_4)^b \rho( l_1-l_3)^c \,.
  \end{eqnarray*}
  We write 
  \[
  g_1 ^{(c)} (X_{l_3}) = \delta^{d-1-c} ( T_{d-1-c} (g^{(c)}_1) (X_{l_3}) l_3 ^{\otimes (d-1-c)}).
  \]
 Then, applying again  the duality relationship (\ref{dua2})  and Leibniz's rule, we obtain
    \begin{eqnarray*}
    	\EE(I_1)    
	   &=& \sum_{a+b+c=d-1} \sum_{a'+b'+c' = d-1-c}
	  \EE \Big(g^{(a+a'+2)}(X_{l_2}) g^{(b+b'+2)}(X_{l_4}) \\
	  && \qquad     \times g_d^{(c')} (X_{l_1})T_{d-1-c}  (g_1^{(c)})(X_{l_3})\Big)  \\
	  &&  \qquad  \times \rho(l_1 - l_2)^a \rho(l_1 - l_4)^b \rho(l_1 - l_3)^{c+c'} \rho(l_3 - l_2)^{a'}\rho(l_3 - l_4)^{b'}.
    \end{eqnarray*}
    We can still represent the factors $g^{(a+a'+2)}(X_{l_2})$ and $g^{(b+b'+2)}(X_{l_4})$ as divergences:
    \[
    g^{(a+a'+2)}(X_{l_2}) = \delta^{d-(a+a'+2)}(T_{d-(a+a'+2)}(g^{(a+a'+2)})(X_{l_2})   l_2 ^{\otimes (d-(a+a'+2))}  )
    \]
    and
       \[
    g^{(b+b'+2)}(X_{l_4}) = \delta^{d-(b+b'+2)}(T_{d-(b+b'+2)}(g^{(b+b'+2)})(X_{l_4})   l_4 ^{\otimes (d-(b+b'+2))}  ).
    \]
    Then,  we repeat the above process to obtain, using the fact that  $g\in \mathcal{D}^{3d-2}$,
\begin{eqnarray}  \nonumber
       |\EE(I_1)|  &\leq&  C \sum  |\rho(l_1 - l_2)^{a+b''} \rho(l_1 - l_4)^{b+b'''} \rho(l_1 - l_3)^{c+c'} \\
       && \qquad  \times  \rho(l_3 - l_2)^{a'+c''}\rho(l_3 - l_4)^{b'+c'''} \rho(l_2 - l_4)^{a''+a'''}|,  \label{equ50} 
   \end{eqnarray}
   where the sum runs over all nonnegative integers $a,b,c, a', b',c', a'', b'', c'', a''', b''', c'''$ satisfying
   \begin{eqnarray*}
   a+b+c &=& d-1 \\
   a' +b'+c' &=& d-1-c \\
   a'' +b''+c'' &=& (d-a'-a-2) \vee 0 \\
   a'''+b'''+c''' &=& (d-b-b' -a'' -2)  \vee 0
   \end{eqnarray*}
    Inequality (\ref{equ50})  can be equivalently written as
    \begin{eqnarray*}
        |\EE(I_1)| \leq C \sum_{\beta \in \mathcal{I}_1} |\rho(l_1 - l_2)^{\beta_1} \rho(l_1 - l_3)^{\beta_2} \rho(l_1 - l_4)^{\beta_3}  \rho(l_3 - l_2)^{\beta_4} \rho(l_2 - l_4)^{\beta_5} \rho(l_3 - l_4)^{\beta_6}| \,,
    \end{eqnarray*}
where $\beta=(\beta_1, \ldots, \beta_6)$ and  $\mathcal{I}_1 $ is the set defined in (\ref{equ60}).
Notice that we have the lower bound $\sum_{i=1}^6 \beta_i \ge 2d-3$. On the other hand, the upper bound $\sum_{i=1}^6 \beta_i   \le 3d-4$ is attained when
$a=d-1$, $a'=d-1$, $a'''=d-2$ and the other numbers vanish. Taking into account that in this case  the function $g''$  might be differentiated $3d-4$ times, we need  $g\in \mathcal{D}^{3d-2}$.
 
When $g$ is the Hermite polynomial  $H_d$, $g_d = 1$ and $g_1 = H_{d-1}$, so we have $T_{d-1-c}  (g_d^{(c)} )= (d-1)(d-2)\cdots (d-c)$. In this case, taking into account of the orthogonality of  Hermite polynomials of different order, we obtain
   \begin{eqnarray*}
   	 |\EE(I_1)| & \leq &  C\sum_{a+b+c=d-1, a'+b'= d-1-c, a+a'= b+b'=\tilde c} |\rho(l_1 - l_2)^a \rho(l_1 - l_4)^b \rho(l_1 - l_3)^{c} \\
     & & \ \times \rho(l_3 - l_2)^{a'}\rho(l_3 - l_4)^{b'} \rho(l_2 - l_4)^{d-2-\tilde c}| \,.
   \end{eqnarray*}
   Again this can be written as
    \begin{eqnarray*}
        |\EE(I_1)| \leq C \sum_{\beta \in \mathcal{I}_2} |\rho(l_1 - l_2)^{\beta_1} \rho(l_1 - l_3)^{\beta_2} \rho(l_1 - l_4)^{\beta_3}  \rho(l_3 - l_2)^{\beta_4} \rho(l_2 - l_4)^{\beta_5} \rho(l_3 - l_4)^{\beta_6}| \,,
    \end{eqnarray*}   
   where $\mathcal{I}_2$ is the set of  $\beta \in \mathbb{N}_0^6$ such that $\beta_1 + \beta_2 +\beta_3=d-1$, $\beta_4+ \beta_6 + \beta_2=d-1$ and
   $\beta_1 + \beta _4= \beta_3 + \beta_6 =d- 2-\beta_5$.  This implies $\beta_1 =\beta_6 $, $\beta_3 =\beta_4$, $\beta_5 = \beta_2 -1$ and $\beta_1+ \beta_2 +\beta_3 =d-1$, and this completes the proof of (\ref{equ94}).
   
   Similar arguments could be applied to  handle the term $I_2$.
\end{proof}

\begin{lemma}\label{a3.c4} Assume condition (\ref{h1}). Define
\[
J_1= \frac 1{n^3} \sum_{l_1, \ldots, l_6=1}^n   \sum_{i=1 \atop i \not =2}^6
	 \sum_{s\in\{3,5,6\}  \atop s\not =i} \sum_{j=1 \atop j\not =s }^6  |\rho_{2i} \rho_{sj}  \rho_{12}\rho_{13}\rho_{45}\rho_{46}|
	 \]
	 and
	  \begin{eqnarray*}	 
	  J_2 &:=&   \frac 1{n^3}  \sum_{l_1, \ldots, l_6=1}^n  \Big(\sum_{\substack{i \neq s \neq j \\ i,s,j \in \{3,5,6\}}}|\rho_{2i} \rho_{sj} \rho_{12}\rho_{13}\rho_{45}\rho_{46}|   \\
		&& +   
		\sum_{ (i,s,j,t,h)  \in D_3}\ |\rho_{2i} \rho_{sj} \rho_{th} \rho_{12}\rho_{13}\rho_{45}\rho_{46}|
		  \Big),\\
			\end{eqnarray*}
	where  the set $D_3$ has been defined in (\ref{d3}) and we recall that  $\rho_{ij} = \rho(l_i - l_j)$.	Then,
		\begin{equation} \label{equ9}
		J_1 \le  \frac Cn
  \left(\sum_{|k| \leq n} |\rho(k)|\right)^2
  \end{equation}
  and
	 \begin{equation} \label{equ10}
	 J_2 \le  \frac C n \sum_{|k| \leq n}|\rho(k)| +  \frac Cn \left(\sum_{|k| \leq n} |\rho(k)|^{\frac{3}{2}}\right)^4 \,.
	 \end{equation}
\end{lemma}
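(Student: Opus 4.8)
The plan is to reduce each of $J_1$ and $J_2$ to a finite collection of elementary sums and to estimate each one by the rank-one Brascamp--Lieb inequality (Proposition \ref{prop2.10}) after a suitable change of variables. Since the index set over which $(i,s,j)$ ranges in $J_1$, and over which $(i,s,j)$ and $(i,s,j,t,h)$ in the set $D_3$ of (\ref{d3}) range in $J_2$, has cardinality bounded by an absolute constant, it suffices to bound, for each admissible choice of these indices, the sum
\[
\frac{1}{n^{3}}\sum_{l_1,\dots,l_6=1}^n\ \prod_{\{a,b\}\in E(G)}|\rho(l_a-l_b)|,
\]
where $G$ is the multigraph on the vertex set $\{1,\dots,6\}$ whose edges are the four base edges $\{1,2\},\{1,3\},\{4,5\},\{4,6\}$ coming from $\rho_{12}\rho_{13}\rho_{45}\rho_{46}$, together with $\{2,i\},\{s,j\}$ (and, in the $D_3$ case, additionally $\{t,h\}$). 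Thus $G$ has $6$ edges in the $J_1$ situation and $7$ edges in the $D_3$ situation, and it always contains the two ``cherries'' $2$--$1$--$3$ and $5$--$4$--$6$, so that vertices $1$ and $4$ already have degree two.

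For a fixed elementary sum I would first split off one vertex in each connected component of $G$ as a reference index (costing a factor $n^{c}$, $c$ being the number of components) and use the differences along a spanning forest together with the remaining ``cycle'' differences as free summation variables $\mathbf{k}\in\mathbb{Z}^{M}$, $M=6-c$, each ranging over $\{-n,\dots,n\}$. The inner sum then has the form $\sum_{|\mathbf k|\le n}\prod_{e\in E(G)}|\rho(\mathbf k\cdot\mathbf v_e)|$, to which Proposition \ref{prop2.10} applies with this $M$ and $N=|E(G)|$. One selects exponents $p_e$ with $\sum_e p_e=M$ so that exactly two of them equal $1$ (these produce the two factors $\sum_{|k|\le n}|\rho(k)|$) and the rest are as small as the subspace condition (ii) of Proposition \ref{prop2.10} permits; then each remaining sum $\sum_{|k|\le n}|\rho(k)|^{1/p_e}$ is controlled by (\ref{h1}) with $d=2$ (a bounded sum when $1/p_e\ge 2$) or by the Hölder interpolation bound $\sum_{|k|\le n}|\rho(k)|^{q}\le C\,n^{1-q/2}$, valid for $q\in[1,2]$, which turns the extra factors into powers of $n$ absorbing the $n^{-3}$ normalization. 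Together with an occasional use of the converse inequality $\bigl(\sum_{|k|\le n}|\rho(k)|^{3/2}\bigr)^{2}\ge c\,n^{-1}\bigl(\sum_{|k|\le n}|\rho(k)|\bigr)^{3}$ (again from Hölder), this scheme produces the single term of (\ref{equ9}) and the two terms of (\ref{equ10}).

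Two configurations have to be singled out. First, in $J_2$ the two extra edges can be exactly $\{2,3\}$ and $\{5,6\}$, in which case $G$ splits into the two disjoint triangles $\{1,2,3\}$ and $\{4,5,6\}$; applying Proposition \ref{prop2.10} to each triangle separately (three edges, two free variables, all $p_e=2/3$) gives $n\bigl(\sum_{|k|\le n}|\rho(k)|^{3/2}\bigr)^{2}$ per triangle and hence, after dividing by $n^{3}$, the term $\tfrac1n\bigl(\sum_{|k|\le n}|\rho(k)|^{3/2}\bigr)^{4}$. Second, when the two extra edges of $J_2$ cross between the two cherries, $G$ is a $6$-cycle; here one writes the inner sum as a triple convolution $\||\rho|^{*3}\|_{2}^{2}$ and estimates it by Young's inequality and the interpolation/converse bounds above, again arriving at $\tfrac1n\bigl(\sum_{|k|\le n}|\rho(k)|^{3/2}\bigr)^{4}$. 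When instead an extra edge of $D_3$ repeats a base edge or attaches a leaf, the corresponding $|\rho|$-factor is bounded crudely by $1$, removing one summation and yielding the strictly smaller term $\tfrac Cn\sum_{|k|\le n}|\rho(k)|$.

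The main difficulty I anticipate is the bookkeeping of cases according to how the extra edges $\{2,i\},\{s,j\}$ (and $\{t,h\}$) sit relative to the two base cherries --- inside one cherry, completing it to a triangle, doubling a base edge, or crossing to form a longer cycle --- and, in each case, checking that the subspace-dimension condition (ii) of Proposition \ref{prop2.10} still admits a choice of exponents meeting the two requirements above simultaneously. The short cycles created by the extra edges lower the dimensions of the relevant spans precisely in the cases where one must push the exponents to their extreme values, so the admissible region shrinks exactly where care is needed; verifying that it remains nonempty, and that the resulting product of sums $\sum_{|k|\le n}|\rho(k)|^{1/p_e}$ is dominated by the right-hand sides of (\ref{equ9}) and (\ref{equ10}), is the heart of the argument. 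For $J_1$ one does not need the sharpest choice in every case: it suffices to note that $\bigl(\sum_{|k|\le n}|\rho(k)|^{3/2}\bigr)^{4}\le C\bigl(\sum_{|k|\le n}|\rho(k)|\bigr)^{2}$, so that every elementary bound is absorbed into $\tfrac Cn\bigl(\sum_{|k|\le n}|\rho(k)|\bigr)^{2}$.
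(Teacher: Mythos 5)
Your proposal is correct and is essentially the paper's own argument: after passing to difference variables the paper enumerates the finitely many configurations of the extra factors (its elementary terms $J_{11},\dots,J_{19}$ and the additional $\rho_{th}$ cases coming from $D_3$) and bounds each one by the rank-one Brascamp--Lieb inequality of Proposition \ref{prop2.10} together with H\"older/interpolation consequences of $\sum_k \rho(k)^2<\infty$, which is exactly your scheme, and your two extremal configurations (the two disjoint triangles and the six-cycle) are precisely the paper's $J_{15}$ and $J_{19}$, your Young's-inequality treatment of the cycle giving the same bound $\bigl(\sum_{|k|\le n}|\rho(k)|^{6/5}\bigr)^{5}$ as the paper's Brascamp--Lieb step with all exponents $5/6$ followed by H\"older. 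One small correction to your final remark: in the $D_3$ part, the configurations whose first two extra edges already connect the two cherries (the paper's $J_{16},\dots,J_{19}$ bases) are handled by discarding the third factor and are absorbed into the term $\frac Cn\bigl(\sum_{|k|\le n}|\rho(k)|^{3/2}\bigr)^{4}$ rather than into $\frac Cn\sum_{|k|\le n}|\rho(k)|$, which is harmless since both terms appear on the right-hand side of (\ref{equ10}).
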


\begin{proof}
{\it Step 1: } We show  first  the inequality (\ref{equ9}).   We make change of variables  $l_1 - l_2 = k_1$, $l_1 - l_3 = k_2$, $l_4 - l_5 = k_3$, $l_4 - l_6=k_4$.   We first consider the term $\rho_{2i}$ that has three possibilities: $\rho(k_1)$, $\rho(k_1 - k_2)$, or a new factor $\rho(k_5)$ where $k_5 = l_2 - l_i$ is linearly  independent of $k_t, t=1, \ldots, 4$. If $\rho_{2i}$ is one of the first two cases, $\rho_{sj}$ have three possibilities: $\rho(k_i)$ for $i=2,3,4$; $\rho(k_1-k_2)$ or $\rho(k_3 - k_4)$; a new factor $\rho(k_5)$ where $k_5 = l_j - l_s$ independent of $k_t$, $1\le t \le 4$.  If $\rho_{2i}$ is in the third case, i.e. a new factor, then $\rho_{sj}$ have several possibilities: $\rho(k_i)$ for $i=2,3,4$; $\rho({\bf k} \cdot {\bf v})$ where ${\bf k} \cdot {\bf v}$ is a linear combination of two, three or four or five $k_t$'s, $1\le  t \le 5$. Through this analysis, by taking advantage of the symmetry, we obtain
	    \[
	    J_1 \leq  \frac C{n^2}  \sum_{i=1}^9       \sum_{|k_j| \leq n, 1\le j \le 5} |J_{1i}|,
	    \]
where	
\begin{eqnarray*}
		&& J_{11} = \rho(k_1)^2 \rho(k_2)^2 \rho(k_3) \rho(k_4), \\
		&& J_{12} = \rho(k_1)^2 \rho(k_2) \rho(k_1 - k_2)\rho(k_3) \rho(k_4), \\
		&& J_{13} = \rho(k_1)^2 \rho(k_2) \rho(k_3) \rho(k_4) \rho(k_3 - k_4), \\
		&& J_{14} = \rho(k_1)^2 \rho(k_2) \rho(k_3) \rho(k_4) \rho(k_5),\\
		&& J_{15} = \rho(k_1) \rho(k_2) \rho(k_1-k_2) \rho(k_3) \rho(k_4) \rho(k_3-k_4), \\
		&& J_{16} = \rho(k_1) \rho(k_2) \rho(k_1-k_2) \rho(k_3) \rho(k_4) \rho(k_5),\\
		&& J_{17} = \rho(k_1) \rho(k_2) \rho(k_3) \rho(k_4) \rho(k_5)\rho(k_1 - k_5 - k_2) ,\\
		&& J_{18} = \rho(k_1) \rho(k_2) \rho(k_3) \rho(k_4) \rho(k_5) \rho(k_1 - k_2 + k_3 - k_4) ,\\
		&& J_{19} = \rho(k_1) \rho(k_2) \rho(k_3) \rho(k_4) \rho(k_5) \rho(k_1 - k_2 + k_3 - k_4 + k_5).
	\end{eqnarray*}
	We claim that for $i=1,\dots, 9$, the following estimate holds true
	\begin{equation} \label{equ11}
  \frac 1{n^2}   \sum_{|k_j| \leq n, 1\le j\le 5}  |J_{1i}| \leq \frac{C}{n} \left(\sum_{|k| \leq n} |\rho(k)|\right)^2.
  \end{equation}
  The estimate  (\ref{equ11}) holds clearly for $i=1$ and $i=4$ due to condition (\ref{h1}).
By the Cauchy-Schwartz inequality   we have
\[
 \sum_{|k_1|, |k_2| \leq n}  \rho(k_1)^2 |\rho(k_2) \rho(k_1 - k_2)| <\infty
 \]
 and (\ref{equ11}) is true for $i=2$. For $i=3, 5, 6$, the estimate (\ref{equ11}) follows from  (\ref{equ6}) and   (\ref{equ7}) with $M=2$ and
 for $i=7,8,9$ we use these inequalities with $M=3,4,5$, respectively.

   \medskip
   \noindent
   		{\it Step 2:} We proceed to prove the inequality (\ref{equ10}). Note that  for the first summand in $J_2$, the product $\rho_{2i} \rho_{sj}$ can be only one of the following terms:  $\rho_{23} \rho_{56}$,   $\rho_{26} \rho_{35}$, or  $\rho_{25} \rho_{36}$. In the first case, we obtain the term $J_{15}$, for which we have, by (\ref{equ6}) with $M=2$, 
		\[
		 \frac 1{n^2}    \sum_{|k_j| \leq n, 1 \le j\le 5}  |J_{15}|   \le  \frac Cn \left(\sum_{|k| \leq n} |\rho(k)|^{\frac{3}{2}}\right)^4.
		 \]
		In the second and third case, we obtain the term  $J_{19}$, for which we have, by (\ref{equ6}) with $M=5$, 
		\begin{equation} \label{equ13}
		 \frac 1{n^2}   \sum_{|k_j| \leq n, 1 \le j\le 5}  |J_{19}|   \le  \frac C{n^2}  \left(\sum_{|k| \leq n} |\rho(k)|^{\frac{6}{5}}\right)^5.
		 \end{equation}
		By H\"older's inequality,
		\begin{equation} \label{equ14}
		\left(\sum_{|k| \leq n} |\rho(k)|^{\frac{6}{5}}\right)^5 \le  n\left(\sum_{|k| \leq n} |\rho(k)|^{\frac{3}{2}}\right)^4,
		\end{equation}
		and we obtain the desired bound.
		
		Let us now consider the second summand in the expression of $J_2$.
	 This summand will consists of terms of the form  $ J_{1i} \rho_{th}$ for $i=1,\ldots, 4, 6, \ldots, 9$, where $\rho_{th}$ can be written as a linear combination of $k_1, \ldots, k_5$.  For $i=6, \dots, 8$, we  estimate the factor $|\rho_{th}|$ by one and
	 apply the estimate (\ref{equ6}) with $M=3,4,5$ to obtain
	 \begin{equation}  \label{equ41}
	  \frac 1{n^2}     \sum_{|k_j| \leq n, 1\le j\le 5}  |J_{16}| \le  \frac C {n^2}  \left(\sum_{ |k| \le n} |\rho(k)| \right)^3  \left(\sum_{ |k| \le n} |\rho(k)| ^{\frac 32}\right)^2,
	  \end{equation}
	    \begin{equation}  \label{equ42}
	  \frac 1{n^2}   \sum_{|k_j| \leq n, 1\le j\le 5}  |J_{17}| \le  \frac C {n^2}  \left(\sum_{ |k| \le n} |\rho(k)| \right)^2  \left(\sum_{ |k| \le n} |\rho(k)| ^{\frac 43}\right)^3,
	   \end{equation}
	  and
       \begin{equation}  \label{equ43}
	  \frac 1{n^2}   \sum_{|k_j| \leq n, 1\le j\le 5}  |J_{18}| \le  \frac C {n^2}  \left(\sum_{ |k| \le n} |\rho(k)| \right)  \left(\sum_{ |k| \le n} |\rho(k)| ^{\frac 54}\right)^4.
	  	   \end{equation}
	  Then, from (\ref{equ41}) and \eqref{ho2}, we get
	  \[
	  \frac 1{n^2}     \sum_{|k_j| \leq n, 1\le j\le 5}  |J_{16}| \le  \frac C {n}\left(\sum_{ |k| \le n} |\rho(k)| ^{\frac 32}\right)^4.
	  \] 
	  From   (\ref{equ42}), (\ref{ho1}) with $M=3$ and (\ref{ho2})
	  \[
	   \frac 1{n^2}   \sum_{|k_j| \leq n, 1\le j\le 5}  |J_{17}| \le
	   \frac C {n}\left(\sum_{ |k| \le n} |\rho(k)| ^{\frac 32}\right)^4.
	   \]
	   Finally, from (\ref{equ43}), (\ref{ho1}) with $M=4$ and the above inequality of $J_{17}$,
	  \[
	   \frac 1{n^2}   \sum_{|k_j| \leq n, 1\le j\le 5}  |J_{18}| \le
	   \frac C {n}\left(\sum_{ |k| \le n} |\rho(k)| ^{\frac 32}\right)^4.
	   \]
	  The term  $J_{19}$ can be handled  applying
	   (\ref{equ13}) and (\ref{equ14}).
	  
	    For $J_{11}, J_{12}$, $t$ can be just chosen from  the set $\{5, 6\}$ and the possible values of the factor $\rho_{th}$ (after a change of variable) can be $\rho(k_3), \rho(k_4), \rho(k_3 - k_4)$ or $\rho(k_5)$ where $k_5$ is linearly independent of $k_1, \ldots, k_4$. Then we first sum up the variables $k_1$ and $k_2$ and this part produces a constant. The sum with respect to $k_3, k_4, k_5$ is as follows.
 \[
 \sum_{|k_j| \leq n} |\rho(k_3)^2 \rho(k_4)| \leq C \sum_{|k| \leq n} |\rho(k)|\,, \ \sum_{|k_j| \leq n} |\rho(k_3) \rho(k_4) \rho(k_5)| = (\sum_{|k| \leq n} |\rho(k)|)^3 \leq n \sum_{|k| \leq n} |\rho(k)|
 \]
and 
\[
\sum_{|k_j| \leq n} |\rho(k_3) \rho(k_4) \rho(k_3 - k_4)| \leq C \sum_{|k| \leq n} |\rho(k)|\,,
\]
where we have used \eqref{equ6} and (\ref{equ7}) with $M=2$. Therefore,
     \[
    \frac 1{n^2} \sum_{j=1}^5  \sum_{|k_j| \leq n} |J_{1i} \rho_{th}| \leq \frac{C}{n} \sum_{|k| \leq n} |\rho(k)|  \,, \ i=1,2 \,.
    \]
For $J_{13}$, $t=3$  and  possible values of $\rho_{th}$ can be $\rho(k_2), \rho(k_2 - k_1)$ or $\rho(k_5)$ where $k_5$ is linearly independent of $k_1, \ldots, k_4$. The first two cases have been considered above in the discussion of the terms $J_{11} \rho_{th}$ and $J_{12} \rho_{th}$. For the third case, observe that 
     \begin{eqnarray*}
		    \frac 1{n^2} \sum_{j=1}^5  \sum_{|k_j| \leq n} |\rho(k_1)^2 \rho(k_2) \rho(k_3) \rho(k_4) \rho(k_3 - k_4) \rho(k_5)| \\
		 \leq  \frac  C{n^2}   \left(\sum_{|k| \leq n} |\rho(k)| \right)^3    \leq  \frac Cn \sum_{|k| \leq n} |\rho(k)|.
	\end{eqnarray*}
where we have used \eqref{equ6} and \eqref{equ7} with $M=2$. Thus,
     \[
       \frac 1{n^2} \sum_{j=1}^5  \sum_{|k_j| \leq n}  |J_{13} \rho_{th}| \leq \frac{C}{n} \sum_{|k| \leq n} |\rho(k)| .
       \]
Finally, for $J_{14}$, the term $\rho_{th}$ could be $\rho(k_i), i=2,\ldots, 4$ or $\rho(\star)$ where $\star$ is a linear combination of $k_i$'s which at least involves  two different terms $k_{h_1}$ and  $k_{h_2}$ where $h_1, h_2 \in \{2,3,4,5\}$. The first case has been considered above in  the discussion of the terms $J_{1i} \rho_{th}, i = 1, 2,3$. For the second case, we apply
inequalities \eqref{equ6} and \eqref{equ7} with $M=2,3,4$ and we get
   \begin{eqnarray*}
	  \frac 1{n^2} \sum_{j=1}^5  \sum_{|k_j| \leq n}  |\rho(k_1)^2 \rho(k_2) \rho(k_3) \rho(k_4) \rho(k_5) \rho(\star)| \\
	 \leq   \frac C {n^2}  \left(\sum_{|k| \leq n} |\rho(k)| \right)^3    \leq  \frac Cn \sum_{|k| \leq n} |\rho(k)|.
\end{eqnarray*}
Thefore, $  \frac 1{n^2} \sum_{j=1}^5  \sum_{|k_j| \leq n} |J_{14} \rho_{th}| \leq \frac{C}{n} \sum_{|k| \leq n} |\rho(k)|$ and this finishes the proof.
\end{proof}

\begin{lemma}\label{a4.c6}
  Define 
	\[
	\mathcal{L}_1:= n^{-4} \sum_{l_1, \ldots, l_8=1}^n  \sum_{ (i,s,j,t,h) \in D_4} |\rho_{12} \rho_{13} \rho_{14} \rho_{56} \rho_{57} \rho_{58}\rho_{2i} \rho_{sj} \rho_{th} |.
	\]
	where the set $D_4$  has been defined in (\ref{d4}). Then
		\begin{equation} \label{equ20}
	\mathcal{L}_1 \le \frac{C}{n^2} \left(\sum_{|k| \leq n} |\rho(k)|\right)^{3} .
	\end{equation}
	\end{lemma}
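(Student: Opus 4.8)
The plan is to mimic exactly the proof of Lemma~\ref{a3.c4}: fix a tuple $(i,s,j,t,h)\in D_4$, reduce the $8$-fold sum over $l_1,\dots,l_8$ to an extra factor $n$ (from the overall translation) times a $7$-fold sum over difference variables of the nine $|\rho|$-factors, and then bound each such $7$-fold sum by $C\,n\bigl(\sum_{|k|\le n}|\rho(k)|\bigr)^{3}$. Since $D_4$ defined in \eqref{d4} has $O(1)$ elements, adding up these bounds and multiplying by $n^{-4}$ yields \eqref{equ20}, because $n^{-4}\cdot n\cdot n\bigl(\sum_{|k|\le n}|\rho(k)|\bigr)^{3}=n^{-2}\bigl(\sum_{|k|\le n}|\rho(k)|\bigr)^{3}$.

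First I would anchor the sum. Writing $m_a=l_a-l_8$ for $a=1,\dots,7$, the product of the nine factors depends only on $(m_1,\dots,m_7)$, the number of admissible $l_8$ is at most $n$, and each $m_a$ ranges over $\{-n,\dots,n\}$, so it suffices to prove that for every $(i,s,j,t,h)\in D_4$,
\[
\sum_{|m_a|\le n,\ 1\le a\le 7}|\rho_{12}\rho_{13}\rho_{14}\rho_{56}\rho_{57}\rho_{58}\rho_{2i}\rho_{sj}\rho_{th}|\ \le\ C\,n\Bigl(\sum_{|k|\le n}|\rho(k)|\Bigr)^{3},
\]
each factor now being $|\rho|$ evaluated at a linear form in $(m_1,\dots,m_7)$. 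Here the six ``star'' factors $\rho_{12},\rho_{13},\rho_{14}$ and $\rho_{56},\rho_{57},\rho_{58}$ contribute the vectors $e_1-e_2,e_1-e_3,e_1-e_4$ and $e_5-e_6,e_5-e_7,e_5$ (with the convention $e_8=0$), which already span a $6$-dimensional subspace; the three remaining factors $\rho_{2i},\rho_{sj},\rho_{th}$ tie the two stars together and close the remaining cycles, and the restrictions $i\neq2$, $s\notin\{i,j\}$, $t\notin\{i,s,j,h\}$, $s,t\in\{3,4,6,7,8\}$ defining $D_4$ are precisely what rules out the most degenerate edge configurations.

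Next I would apply the rank-one Brascamp--Lieb inequality \eqref{BL} (Proposition~\ref{prop2.10}) on $\mathbb{Z}^{7}$ to the nine linear forms, choosing the weights $p_j$ so that three suitably selected factors carry weight $p_j=1$ --- producing the factor $\bigl(\sum_{|k|\le n}|\rho(k)|\bigr)^{3}$ --- while the other six carry weights summing to $4$ (for instance $p_j=2/3$ each, i.e.\ exponent $3/2$), in which case $\prod(\sum_{|k|\le n}|\rho(k)|^{3/2})^{2/3}=(\sum_{|k|\le n}|\rho(k)|^{3/2})^{4}\le C\,n$ by Hölder's inequality and condition \eqref{h1}. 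More generally, the residual six-factor sums are handled by condition \eqref{h1} together with the Hölder-type inequalities \eqref{equ6}, \eqref{equ7}, \eqref{ho1}, \eqref{ho2} and \eqref{ho3}. Alternatively, one can avoid invoking Brascamp--Lieb directly and instead carry out the change of variables followed by the case-by-case summation exactly as for the terms $J_{1i}$ in Lemma~\ref{a3.c4}.

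The main obstacle is that the Brascamp--Lieb dimension condition (condition~(ii) of Proposition~\ref{prop2.10}) can fail for a single uniform choice of exponents: e.g.\ when $i\in\{3,4\}$ the vectors of $\rho_{12},\rho_{13},\rho_{2i}$ span only a $2$-dimensional space, so these three cannot all carry weight $1$, and the same phenomenon occurs for certain placements of $(s,j)$ and $(t,h)$. Hence the proof must split into subcases according to how the extra edges $\rho_{2i},\rho_{sj},\rho_{th}$ sit relative to the two stars --- forming a triangle with the $l_1$-star, a triangle with the $l_5$-star, a bridge between the stars, or a path through a leaf --- and in each subcase one must exhibit an admissible weight vector $(p_1,\dots,p_9)$ (three unit weights, the remaining six summing to $4$, all dimension inequalities satisfied) and verify that the corresponding residual sum is $O(n)$. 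Checking that this list of subcases is exhaustive and that the resulting bound is uniform over $D_4$ is the bulk of the work; it is routine but lengthy, paralleling the nine-case analysis of $J_{1i}$ in the proof of Lemma~\ref{a3.c4}.
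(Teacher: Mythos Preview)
Your outline is correct and close in spirit to the paper's argument, but the paper's execution is cleaner because it uses a change of variables adapted to the two-star structure rather than the generic anchoring $m_a=l_a-l_8$. Specifically, the paper sets $k_1=l_1-l_2$, $k_2=l_1-l_3$, $k_3=l_1-l_4$, $k_4=l_5-l_6$, $k_5=l_5-l_7$, $k_6=l_5-l_8$, so the six star factors become simply $|\rho(k_1)\cdots\rho(k_6)|$. Each extra factor $\rho_{2i},\rho_{sj},\rho_{th}$ is then either (i) a linear combination of $k_1,k_2,k_3$ alone or of $k_4,k_5,k_6$ alone (possibly coinciding with some $\rho(k_i)$, producing a squared factor), or (ii) a bridge between the two stars, in which case the first such bridge is renamed $\rho(k_7)$ and any further bridges are linear forms in $k_1,\dots,k_7$. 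This yields prefactors $n^{-2}$ (six $k$-variables, no bridge) or $n^{-3}$ (seven $k$-variables, at least one bridge), and the case split is organized by how many of the extra factors coincide with a star factor (three, two, one, or none squared), plus the location of the linear forms. In each case the paper applies one of the ready-made inequalities \eqref{equ21}, \eqref{equ22}, \eqref{equ23} rather than setting up Brascamp--Lieb weights from scratch; these lemmas already encode the admissible weight choices you are looking for. Your anchoring at $l_8$ works in principle but turns the star factors into differences $\rho(m_1-m_2),\dots$, which obscures the ``squared factor'' bookkeeping and makes the dimension checks for Brascamp--Lieb harder to read off; the paper's parametrization is what makes the case analysis short.
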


\begin{proof}
     We make  the change of variables  $l_1-l_2= k_1$, $l_1-l_3 =k_2$, $l_1-l_4 =k_3$,  $l_5-l_6= k_4$, $l_5-l_7 =k_5$, $l_5-l_8 =k_6$.
  The  factors $ \rho_{2i} $, $\rho_{sj} $ and $\rho_{th}$ can be  of one of the two forms:
  \begin{itemize}
  \item[(i)] $\rho_{\alpha \beta}$,  where $\alpha, \beta \in \{1,2,3,4\}$  or  $ \alpha, \beta \in \{5,6,7,8\}$.
  \item[(ii)] $\rho_{\alpha \beta}$,  where  $\alpha \in \{1,2,3,4\}$ and  $\beta \in \{5,6,7,8\}$ or  $\beta \in \{1,2,3,4\}$ and  $\alpha \in \{5,6,7,8\}$.
  \end{itemize}
  For factors of the form (i), we have $\rho_{\alpha \beta} = \rho( {\bf k} \cdot  {\bf v})$, where ${\bf k}$ is one of the vectors $(k_1,k_2,k_3) $ or $(k_4,k_5,k_6) $ and ${\bf v}$ is a vector in $\R^4$ whose components are  $0$, $1$ or $-1$. For the first factor of the form (ii), we write $\rho_{\alpha \beta} = \rho(k_7)$, where $k_7$ is a new variable independent of  the $k_i$'s, $1\le i \le 6$. If there are more than one factor of the form (ii), then these extra factor(s) can be written as   $\rho( {\bf k} \cdot  {\bf v})$, where
  ${\bf k}=(k_1,k_2,k_3,k_4,k_5,k_6,k_7) $ and ${\bf v}$ is a vector in $\R^7$ whose components are  $0$, $1$ or $-1$.
  
  Then we decompose  $\mathcal{L}_1$  as the sum of several terms $\mathcal{L}_{1j}$, according to the following cases:\\
  
  \noindent
	 {\it Case 1:} There are three factors that have power $2$. We denote the corresponding term by $\mathcal{L}_{11}$. For this term we have
	 \[
	 \mathcal{L}_{11} = \frac 1 {n^2}  \sum_{\substack{|k_i| \leq n \\ i=1,\ldots, 6}}  \rho(k_1)^2 \rho(k_2)^2\rho(k_3)^2| \rho(k_4) \rho(k_5) \rho(k_6)| \le  \frac{C}{n^2} \left(\sum_{|k| \leq n} |\rho(k)|\right)^{3}.
	 \]
	 \\
	  {\it Case 2:} Two factors have power $2$. Then we have the following possibilities by taking into account of the symmetry.
\[
      	\mathcal{L}_{12}:=  \frac 1 {n^3}  \sum_{\substack{|k_i| \leq n \\ i=1,\ldots, 7}} |\rho^2(k_1) \rho^2(k_2) \rho(k_3) \rho(k_4) \rho(k_5) \rho(k_6)  \rho(k_7) | 
	\]
	and
	\[
		\mathcal{L}_{13}:= \frac 1 {n^2} \sum_{\substack{|k_i| \leq n \\ i=1,\ldots, 6}} |\rho^2(k_1) \rho^2(k_2) \rho(k_3) \rho(k_4) \rho(k_5) \rho(k_6)\rho({\bf k} \cdot  {\bf v}) | ,
		\]
	 where  ${\bf k} =(k_1, k_2, k_3, k_4, k_5, k_6)$ and  ${\bf v}$ is a vector in $\R^6$ whose components are  $0$, $1$ or $-1$. Clearly,
	   \begin{eqnarray*}
	   	  \mathcal{L}_{12} \leq  \frac C  {n^3} \left(\sum_{|k| \leq n} |\rho(k)|\right)^5 \leq  \frac C {n^2} \left(\sum_{|k| \leq n} |\rho(k)|\right)^{3} \,.
	   \end{eqnarray*}
	For $\mathcal{L}_{13}$, $ {\bf k} \cdot {\bf v}$ involves at least two factors $k_{j}, k_{j'}$ but  $ {\bf k} \cdot {\bf v}$ cannot be a linear combination of only $k_1 $ and $k_2$.  Applying inequality (\ref{equ22})  with $M=5$, yields
   \begin{eqnarray*}
   	  \mathcal{L}_{13} \leq n^{-2} \left(\sum_{|k| \leq n} |\rho(k)|\right)^3 \,.
   \end{eqnarray*}
   
   \medskip
   \noindent
    {\it Case 3:} Only one factor has power $2$. Then we have the following two possibilities, taking into account the symmetry. The first one is
 \[
      	\mathcal{L}_{14} = \frac 1 {n^3} \sum_{\substack{|k_i| \leq n \\ i=1,\ldots, 7}}|\rho^2(k_1) \rho(k_2) \rho(k_3) \rho(k_4) \rho(k_5) \rho(k_6) \rho(k_7) \rho({\bf k} \cdot {\bf v})| \,,
	\]
	where   ${\bf k} =(k_1, k_2, k_3, k_4, k_5, k_6, k_7)$ and  ${\bf v}$ is a vector in $\R^7$ whose components are  $0$, $1$ or $-1$
	and it has at least two nonzero  components. By (\ref{equ22}) with $M=7$, we can write
\[
   	  \mathcal{L}_{14}  \leq  \frac C { n^3}  \left(\sum_{|k| \leq n} |\rho(k)|\right)^5 \leq   \frac C { n^2} \left(\sum_{|k| \leq n} |\rho(k)|\right)^{3} \,.
\]
	The second  possibility  is
	\[
		\mathcal{L}_{15}:= \frac 1{ n^2}  \sum_{\substack{|k_i| \leq n \\ i=1,\ldots, 6 }}|\rho^2(k_1) \rho(k_2) \rho(k_3) \rho(k_4) \rho(k_5) \rho(k_6) \rho({\bf k} \cdot {\bf v} ) \rho({\bf k} \cdot {\bf w})| \,,
\]
where  ${\bf k} =(k_1, k_2, k_3, k_4, k_5, k_6)$ and ${\bf v}$, ${\bf w} $ are vectors in $\R^6$ in such a way that
${\bf k} \cdot {\bf v} $ and ${\bf k} \cdot {\bf w}$  are linear combinations of $k_1, k_2, k_3$ or $k_4, k_5, k_6$
with exactly two nonzero components  are equal to $1$ and $-1$ and satisfying some additional restrictions, due to the definition of the set $D_4$. 
 There are several combinations:
 \begin{itemize}
 \item[(i)] ${\bf k} \cdot {\bf v}= k_1-k_2$ and ${\bf k} \cdot {\bf w}$ is either $k_2-k_3$ or $k_1-k_3$. In this case, by H\"older's inequality, we have
 \[
  \sum_{ | k_i | \le n, 1\le i \le 3} |\rho^2(k_1) \rho(k_2) \rho(k_3) \rho({\bf k} \cdot {\bf v} )  \rho({\bf k} \cdot {\bf w})| \le C,
  \]
  and  we obtain
  \begin{equation}  \label{equ30}
   	  \mathcal{L}_{15}   \leq   \frac C { n^2} \left(\sum_{|k| \leq n} |\rho(k)|\right)^{3} \,.
\end{equation}
  \item[(ii)]  ${\bf k} \cdot {\bf v}$ and ${\bf k} \cdot {\bf w}$ are two  different  linear combinations  chosen among
  $\{ k_4-k_5, k_4-k_6, k_5-k_6\}$.  Then, the inequality (\ref{equ23}) with $M=3$ yields
   \[
  \sum_{ | k_i | \le n,  i=4,5,6} |\rho(k_4) \rho(k_5) \rho(k_6) \rho({\bf k} \cdot {\bf v} )  \rho({\bf k} \cdot {\bf w})| \le 
  \sum_{|k| \leq n} |\rho(k)|,
  \]
  which implies  (\ref{equ30}).
  \item[(iii)] If ${\bf k} \cdot {\bf v}= k_1-k_2$ and  ${\bf k} \cdot {\bf w}$ is  $ k_4-k_5$,  $ k_4-k_6$ or $ k_5-k_6$, then  (\ref{equ30})  follows from
  \[
    \sum_{ | k_1 | \le n,  |k_2| \le n} |\rho^2(k_1) \rho(k_2) \rho(k_1- k_2) |   \le C
  \]
  and (\ref{equ21}) with $M=3$.
  \end{itemize}
 
  \medskip
  \noindent 
	 {\it Case 4:} All factors have power $1$, $i \in \{3, 4\}$, and $\rho_{sj}=\rho({\bf k} \cdot {\bf v}), \rho_{th}=\rho({\bf k} \cdot {\bf w})$ where ${\bf k} \cdot {\bf v}$ is a linear combination of $k_1,k_2, k_3$ and  ${\bf k} \cdot {\bf w}$ is a linear combination of $k_4, k_5, k_6$, or vice versa.  We denote the corresponding term by $\mathcal{L}_{16}$. Then the estimate
	 \[
	 \mathcal{L}_{16} \leq n^{-2} \left(\sum_{|k| \leq n} |\rho(k)|\right)^3 \,
	 \]
	 follows from (\ref{equ23}) with $M=3$ and  (\ref{equ21}) with $M=3$. 
	 
	   \medskip
  \noindent 	    
	 {\it Case 5:} All factors have power $1$, and there is one of the  differences $l_i-l_2$, $l_j- l_s$ or $l_h-l_t$ linearly independent of $k_1, \ldots, k_6$. We denote this difference by $k_7$. The other two factors are of the form $\rho({\bf k} \cdot {\bf v})$ and $\rho({\bf k} \cdot {\bf w})$, where ${\bf k} \cdot {\bf v}$ and  ${\bf k} \cdot {\bf w}$ are linear combinations of $k_1, \ldots, k_6, k_7$. 
	 In this case, the desired estimate follows from the inequality (\ref{equ23}), with $M=7$. In fact, if we denote the corresponding term by $\mathcal{L}_{17}$, we obtain
	 \[
   	  \mathcal{L}_{17}  \leq  \frac C { n^3}  \left(\sum_{|k| \leq n} |\rho(k)|\right)^5 \leq   \frac C { n^2} \left(\sum_{|k| \leq n} |\rho(k)|\right)^{3} \,.
\]
	   This finishes the  lemma.
	   \end{proof}
	   
\begin{lemma} \label{a4.c6-2}
Define	   
 	\[
	\mathcal{L}_2:= n^{-4} \sum_{l_1, \ldots, l_8=1}^n    \sum_{\substack{i \neq s \neq j\\ i, s, j\in\{4,7,8\}}}  |\rho_{12} \rho_{13} \rho_{24} \rho_{56} \rho_{57} \rho_{68}\rho_{3i} \rho_{sj}|  
	\]
	and
	\[
	\mathcal{L}_3:= n^{-4} \sum_{l_1, \ldots, l_8=1}^n    \sum_{ (i,s,j,t,h)\in D_5}  |\rho_{12} \rho_{13} \rho_{24} \rho_{56} \rho_{57} \rho_{68}\rho_{3i} \rho_{sj} \rho_{th}|,
\]
	where the set $D_5$ has been defined in  (\ref{d5}).
	Then
	\begin{equation}
	\mathcal{L}_2   \leq  \frac{C}{n^2} \left(\sum_{|k|\leq n}|\rho(k)|^{\frac{4}{3}}\right)^6. \label{ineq.tL}
		\end{equation}
	and
	\begin{equation}
	 \mathcal{L}_3  \leq \frac{C}{n^2} \left(\sum_{|k| \leq n} |\rho(k)|\right)^3. \label{ineq.tL2}
		\end{equation}
\end{lemma}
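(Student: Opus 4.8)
The plan is to mimic the proof of Lemma \ref{a4.c6}. For both $\mathcal{L}_2$ and $\mathcal{L}_3$ I would first perform the change of variables
\[
k_1=l_1-l_2,\quad k_2=l_1-l_3,\quad k_3=l_2-l_4,\quad k_4=l_5-l_6,\quad k_5=l_5-l_7,\quad k_6=l_6-l_8 ,
\]
which span the two path graphs on $\{l_1,l_2,l_3,l_4\}$ and $\{l_5,l_6,l_7,l_8\}$ carried by the fixed factors $\rho_{12}\rho_{13}\rho_{24}$ and $\rho_{56}\rho_{57}\rho_{68}$. Under this substitution $(l_1,\dots,l_8)$ is parametrized by $(l_1,l_5,k_1,\dots,k_6)$, the product of the six fixed factors becomes $\prod_{i=1}^{6}|\rho(k_i)|$, and $\rho_{34}=\rho(k_1-k_2+k_3)$, $\rho_{78}=\rho(k_4-k_5+k_6)$. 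As in Lemma \ref{a4.c6}, each remaining factor $\rho_{\alpha\beta}$ is then either a within-group factor $\rho(\mathbf{k}\cdot\mathbf{v})$ with $\mathbf{k}$ equal to $(k_1,k_2,k_3)$ or $(k_4,k_5,k_6)$ and $\mathbf{v}\in\{0,\pm1\}^{3}$, or a cross factor linking the two groups; the first cross factor introduces a new variable $k_7$ and absorbs one of the two free sums $\sum_{l_1},\sum_{l_5}$ (so the prefactor $n^{-4}$ improves to $n^{-3}$), while any further cross factor becomes $\rho(\mathbf{k}\cdot\mathbf{v})$ with $\mathbf{k}\in\mathbb{Z}^{7}$. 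Throughout I would use the bound $\sum_{|k|\le n}|\rho(k)|\le C\sqrt n$ (a consequence of \eqref{h1}), the summation inequalities \eqref{equ6}, \eqref{equ7} and \eqref{equ21}--\eqref{equ23}, the Brascamp--Lieb inequality \eqref{BL}, and the H\"older estimate
\[
\Bigl(\sum_{|k|\le n}|\rho(k)|\Bigr)^{4}\le C\,n\Bigl(\sum_{|k|\le n}|\rho(k)|^{4/3}\Bigr)^{3}.
\]

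For \eqref{ineq.tL}, the indices in $\mathcal{L}_2$ satisfy $i,s,j\in\{4,7,8\}$, and no cross factor appears precisely when the two extra factors are $\rho_{34}$ and $\rho_{78}$; in that case the reduced sum factorizes and
\[
Cn^{-2}\Bigl(\sum_{|k_1|,|k_2|,|k_3|\le n}|\rho(k_1)\rho(k_2)\rho(k_3)\rho(k_1-k_2+k_3)|\Bigr)^{2}\le Cn^{-2}\Bigl(\sum_{|k|\le n}|\rho(k)|^{4/3}\Bigr)^{6}
\]
by \eqref{equ6} with $M=3$ applied to each of the two identical factors; this is already the claimed rate, and it is the extremal case. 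If exactly one extra factor is a cross factor, it yields $k_7$ while the other is $\rho(k_1-k_2+k_3)$ or $\rho(k_4-k_5+k_6)$, so the reduced sum is at most $n^{-3}$ times $(\sum_{|k|\le n}|\rho(k)|)^{4}$ times the triple sum above, hence at most $Cn^{-2}(\sum_{|k|\le n}|\rho(k)|^{4/3})^{6}$ by the displayed H\"older estimate. If both extra factors are cross, the first gives $k_7$ and the second is $\rho(\mathbf{k}\cdot\mathbf{v})$ with $\mathbf{k}\in\mathbb{Z}^{7}$; an explicit computation shows $\mathrm{supp}(\mathbf{v})$ equals $\{1,2,3,7\}$ or $\{1,\dots,7\}$, so applying Proposition \ref{prop2.10} in $\mathbb{Z}^{7}$ with weight $3/4$ on four coordinates lying in $\mathrm{supp}(\mathbf{v})$ and weight $1$ on the remaining coordinates and on $\rho(\mathbf{k}\cdot\mathbf{v})$ produces the bound $Cn^{-3}(\sum_{|k|\le n}|\rho(k)|)^{4}(\sum_{|k|\le n}|\rho(k)|^{4/3})^{3}$, which is again at most $Cn^{-2}(\sum_{|k|\le n}|\rho(k)|^{4/3})^{6}$. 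Summing the three contributions gives \eqref{ineq.tL}.

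For \eqref{ineq.tL2} I would repeat the proof of Lemma \ref{a4.c6}, now with three extra factors $\rho_{3i},\rho_{sj},\rho_{th}$ and the index constraints coming from $D_5$ rather than $D_4$, splitting into cases according to how many of the factors in $\prod_{i=1}^{6}|\rho(k_i)|$ acquire exponent $2$, whether the surviving within-group factors lie in the same group or in different ones, and how many cross factors occur. In each case the relevant sum is handled exactly as in Lemma \ref{a4.c6}: Cauchy--Schwarz and \eqref{equ6}--\eqref{equ7} with small $M$ for the within-group sums, \eqref{equ21}--\eqref{equ23} for the longer linear combinations, and $\sum_{|k|\le n}|\rho(k)|\le C\sqrt n$ to absorb the surplus powers of $\sum_{|k|\le n}|\rho(k)|$ that accompany the improved prefactor $n^{-3}$ whenever a cross factor introduces $k_7$. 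Every case is then bounded by $Cn^{-2}(\sum_{|k|\le n}|\rho(k)|)^{3}$, which yields \eqref{ineq.tL2}.

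The hard part will be organizational rather than conceptual: for $\mathcal{L}_3$ one must enumerate all the ways the three factors $\rho_{3i},\rho_{sj},\rho_{th}$ reduce under the change of variables subject to $D_5$ and verify that the worst of them still produces only $(\sum_{|k|\le n}|\rho(k)|)^{3}$ with prefactor $n^{-2}$; for $\mathcal{L}_2$ the key point is to recognize the no-cross case as extremal and that it factorizes exactly into two copies of the $M=3$ instance of \eqref{equ6}, which is what pins down the exponent $4/3$ and the power $6$. A secondary subtlety for the both-cross case of $\mathcal{L}_2$ is that the Brascamp--Lieb weights must sit on coordinates belonging to the support of $\rho(\mathbf{k}\cdot\mathbf{v})$, so one must first verify that this support has size at least four.
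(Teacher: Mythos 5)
Your proposal is correct and follows essentially the same route as the paper: the same change of variables, \eqref{equ6} with $M=3$ applied to each group for the no-cross configuration $\rho_{34}\rho_{78}$, a seven-variable Brascamp--Lieb bound plus H\"older for the cross configurations (the paper uses equal weights, i.e.\ \eqref{equ6} with $M=7$ followed by H\"older, where you use weights $3/4$ and $1$; both give the same bound), and for $\mathcal{L}_3$ a repetition of the case analysis of Lemma \ref{a4.c6} using \eqref{equ21}--\eqref{equ23}, which is exactly how the paper argues. The only cosmetic inaccuracies are that in $\mathcal{L}_2$ the ``exactly one cross factor'' configuration never actually occurs, and in the genuine double-cross cases ($i=7$ or $i=8$) the support of $\mathbf{v}$ is all of $\{1,\dots,7\}$ rather than a four-element set, but your argument covers these cases regardless.
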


\begin{proof}
Let us first show (\ref{ineq.tL}). 
We make  the change of variables  $l_1-l_2= k_1$, $l_1-l_3 =k_2$, $l_2-l_4 =k_3$,  $l_5-l_6= k_4$, $l_5-l_7 =k_5$, $l_6-l_8 =k_6$.
 By symmetry, it suffices to analyze the cases $i=4$ and $i=7$. If $i=4$, then $\rho_{34}= \rho(k_1-k_2+k_3)$ and  $s=8, j=7$ or $s=7,j=8$, which gives $\rho_{sj}= \rho(k_4-k_5+k_6)$. In this case, we obtain a term of the form
 \[
  \mathcal{L}_{21}:= n^{-2} \sum_{|k_i| \leq n, i=1,\ldots, 6} |\rho(k_1)\rho(k_2)\rho(k_3)\rho(k_1-k_2+k_3) \rho(k_4)\rho(k_5)\rho(k_6)\rho( k_4-k_5+k_6)|.
  \]
  Applying inequality (\ref{equ6}) with $M=3$ yields
   \[
  \mathcal{L}_{21} \le  \frac{C}{n^2} \left(\sum_{|k|\leq n}|\rho(k)|^{\frac{4}{3}}\right)^6.
  \]
  In the case $i=7$, we set $\rho_{37} = \rho(k_7)$ and  have two possibilities for $sj$:  
   $48$ and $84$, which produce the following term
\begin{eqnarray*}
	  \mathcal{L}_{23}: &=&  n^{-3} \sum_{|k_i| \leq n \atop i=1,\ldots, 7} |\rho(k_1)\rho(k_2)\rho(k_3)\rho(k_4)\rho(k_5)\rho(k_6)\rho(k_7)   \\
	  && \times  \rho(k_2 + k_7 -k_3 - k_1-k_5+k_4+k_6 )|
  \end{eqnarray*}
  Applying the inequality (\ref{equ6}) with  
  $M=7$
  and H\"older's inequality, we obtain
  \begin{eqnarray*}
    \mathcal{L}_{23} &\le &  \frac C {n^3}  \left(\sum_{|k|\leq n}|\rho(k)|^{\frac{8}{7}}\right)^7  
    \le  \frac C {n^2}  \left(\sum_{|k|\leq n}|\rho(k)|^{\frac{4}{3}}\right)^6. \\
  \end{eqnarray*}
  This finishes the proof of (\ref{ineq.tL}). The proof of  (\ref{ineq.tL2}) is analogous to that of (\ref{equ20}). Namely, we can make the change of variables $l_1-l_2= k_1$, $l_1-l_3 =k_2$, $l_2-l_4 =k_3$,  $l_5-l_6= k_4$, $l_5-l_7 =k_5$, $l_6-l_8 =k_6$, and follow the arguments of (\ref{equ20}). A subtle difference might be the verification of \eqref{equ30}. That is,  the estimation of
		\[
			\mathcal{L}_{15}:= \frac 1{ n^2}  \sum_{\substack{|k_i| \leq n \\ i=1,\ldots, 6 }}|\rho^2(k_1) \rho(k_2) \rho(k_3) \rho(k_4) \rho(k_5) \rho(k_6) \rho({\bf k} \cdot {\bf v} ) \rho({\bf k} \cdot {\bf w})| \,,
	\]
where ${\bf k} \cdot {\bf v}$, ${\bf k} \cdot {\bf w}$ have the following two cases:
   \begin{itemize}
   	\item [(i)] They are linear combinations of $k_4, k_5, k_6$.
	\item [(ii)]${\bf k} \cdot {\bf v}$ is a linear combination of $k_1, k_2, k_3$ ($k_1 - k_2$ with respect to $i=2$ or $k_2-k_1-k_3$ with respect to $i=4$), and ${\bf k} \cdot {\bf w}$ is a linear combination of $k_4, k_5, k_6$.
   \end{itemize}
In the case (i), we apply the inequality \eqref{equ23} with $M=3$ to obtain 
   \begin{equation}\label{equ31}
	   \mathcal{L}_{15} \leq \frac{C}{n^2} \left(\sum_{|k| \leq n} |\rho(k)|\right)^3\,.
   \end{equation}
In the case (ii), we apply  \eqref{equ22} with $M=3$
and  \eqref{equ21} with $M=3$ to obtain the desired the inequality \eqref{equ31}.
 \end{proof}

The next lemma contains several inequalities that are used along the paper.

\begin{lemma}
 Fix an integer $M\ge 2$. We have
\begin{equation}  \label{equ6}
  \sum_{ |k_j| \le n  \atop 1\le j \le M} |\rho( {\bf k}  \cdot  {\bf v} )|   \prod _{j=1} ^M | \rho(k_j) |   \le C \left(\sum_{|k| \leq n} |\rho(k)|^{1+ \frac 1M}\right)^M.
\end{equation}
where ${\bf k} = (k_1, \dots, k_M)$ and ${\bf v} \in \R^M$ is a fixed vector whose  components are $1$or $-1$. Furthermore, if   $\sum_{k \in \mathbb{Z}}
\rho(k)^2<\infty$, then
\begin{equation}  \label{equ7}
\left(\sum_{|k| \leq n} |\rho(k)|^{1+ \frac 1M}\right)^M\le C\left( \sum_{|k| \leq n} |\rho(k)|\right)^{M-1}
\end{equation}
and if  ${\bf v} \in \R^M$ is a nonzero vector whose  components are  $0$, $1$or $-1$
\begin{equation}  \label{equ21}
  \sum_{ |k_j| \le n  \atop 1\le j \le M} |\rho( {\bf k}  \cdot  {\bf v} )|   \prod _{j=1} ^M | \rho(k_j) |   \le C\left( \sum_{|k| \leq n} |\rho(k)|\right)^{M-1}.
\end{equation}
\end{lemma}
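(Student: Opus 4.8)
The plan is to derive \eqref{equ6} from the rank-one Brascamp--Lieb inequality of Proposition \ref{prop2.10}, and then to obtain \eqref{equ7} and \eqref{equ21} from \eqref{equ6} together with elementary H\"older and Cauchy--Schwarz estimates. For \eqref{equ6} I would apply Proposition \ref{prop2.10} in dimension $M$ with $N=M+1$ functions: take ${\bf v}_j=e_j$, the $j$-th coordinate vector of $\R^M$, for $1\le j\le M$, take ${\bf v}_{M+1}={\bf v}$, and set $f_j=|\rho|\,\mathbf{1}_{\{|\cdot|\le Mn\}}$ for all $1\le j\le M+1$. Since ${\bf v}$ has entries in $\{1,-1\}$, the constraint $|k_j|\le n$ forces $|{\bf k}\cdot{\bf v}|\le Mn$, so the left-hand side of \eqref{equ6} is bounded by $\sum_{{\bf k}\in\Z^M}\prod_{j=1}^{M+1}f_j({\bf k}\cdot{\bf v}_j)$. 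I would then take $p_j=\frac{M}{M+1}$ for every $j$ and check the hypotheses of Proposition \ref{prop2.10}: condition (i) holds since $(M+1)\cdot\frac{M}{M+1}=M$; for condition (ii), given $I\subseteq\{1,\dots,M+1\}$ we have $\sum_{j\in I}p_j=|I|\,\frac{M}{M+1}$ while ${\rm dim}\,{\rm Span}\{{\bf v}_j:j\in I\}=\min(|I|,M)$, the key point being that ${\bf v}$ has no zero component and hence lies outside the span of any proper subfamily of the $e_j$'s, so that adding the index $M+1$ increases the dimension by one until the whole space is reached; consequently $\sum_{j\in I}p_j\le{\rm dim}\,{\rm Span}\{{\bf v}_j:j\in I\}$. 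Then \eqref{BL} applies, and since $f_j(k)^{1/p_j}=|\rho(k)|^{(M+1)/M}=|\rho(k)|^{1+1/M}$ and $\sum_{j=1}^{M+1}p_j=M$, its right-hand side equals $C\,(\sum_{|k|\le Mn}|\rho(k)|^{1+1/M})^{M}$; this is \eqref{equ6} up to replacing $n$ by $Mn$ on the right, which is immaterial since $M$ is a fixed constant.

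For \eqref{equ7}, write $1+\tfrac1M=\theta\cdot 1+(1-\theta)\cdot 2$ with $\theta=\tfrac{M-1}{M}$, and apply H\"older's inequality with conjugate exponents $1/\theta$ and $1/(1-\theta)$:
\[
\sum_{|k|\le n}|\rho(k)|^{1+1/M}=\sum_{|k|\le n}|\rho(k)|^{\theta}\,|\rho(k)|^{2(1-\theta)}\le\Big(\sum_{|k|\le n}|\rho(k)|\Big)^{\theta}\Big(\sum_{k\in\Z}|\rho(k)|^2\Big)^{1-\theta}.
\]
Raising to the power $M$ and using $M\theta=M-1$, $M(1-\theta)=1$ and $\sum_{k\in\Z}|\rho(k)|^2<\infty$ yields \eqref{equ7}. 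For \eqref{equ21}, pick an index $i_0$ with $v_{i_0}\in\{1,-1\}$, which exists because ${\bf v}\neq0$. Fixing the variables $k_j$ for $j\neq i_0$ and writing ${\bf k}\cdot{\bf v}=v_{i_0}k_{i_0}+c$ with $c$ constant, Cauchy--Schwarz together with the translation invariance of $\sum_{k\in\Z}|\rho(k)|^2$ and the symmetry $\rho(-k)=\rho(k)$ gives $\sum_{|k_{i_0}|\le n}|\rho(v_{i_0}k_{i_0}+c)|\,|\rho(k_{i_0})|\le\sum_{k\in\Z}|\rho(k)|^2=:C$; summing the remaining $M-1$ variables, each contributing a factor $\sum_{|k|\le n}|\rho(k)|$, yields \eqref{equ21}. (One may alternatively split off the variables outside the support of ${\bf v}$ and apply \eqref{equ6} followed by \eqref{equ7} to the sum over the support.)

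The step I expect to require the most care is the verification of hypothesis (ii) of Proposition \ref{prop2.10} for \emph{every} subset $I$, and in particular the dimension count above when the diagonal vector ${\bf v}$ appears among the ${\bf v}_j$'s; once this is in place, the remaining arguments are routine uses of H\"older's and Cauchy--Schwarz inequalities. The only other point worth flagging is the harmless mismatch between the truncation $|k_j|\le n$ on the left of \eqref{equ6} and the range $|k|\le Mn$ it produces on the right, which is absorbed in the constant $C$ since $M$ is fixed (and, in all applications in this paper, $\rho$ has a power decay).
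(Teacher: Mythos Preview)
Your proposal is correct and, for \eqref{equ6} and \eqref{equ7}, follows exactly the paper's route: the Brascamp--Lieb inequality with the symmetric choice $p_j=M/(M+1)$ for \eqref{equ6}, and the H\"older splitting $|\rho|^{1+1/M}=|\rho|^{(M-1)/M}|\rho|^{2/M}$ for \eqref{equ7}. You are in fact more careful than the paper on two points: the verification of hypothesis~(ii) in Proposition~\ref{prop2.10} (the paper merely states ``$p_i\le 1$'') and the truncation mismatch $n$ versus $Mn$, which the paper does not mention.

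For \eqref{equ21} there is a minor difference worth recording. The paper's argument is to split off the variables outside the support of ${\bf v}$ (each contributing a free factor $\sum_{|k|\le n}|\rho(k)|$) and then apply \eqref{equ6} followed by \eqref{equ7} to the remaining sum; this is exactly the alternative you mention parenthetically. Your primary argument---fix one index $i_0$ in the support of ${\bf v}$ and apply Cauchy--Schwarz in $k_{i_0}$---is more direct, avoids invoking Brascamp--Lieb a second time, and uses only the hypothesis $\sum_k\rho(k)^2<\infty$. Both approaches are short and correct; yours is marginally more elementary.
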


\begin{proof}
     Applying the  Brascamp-Lieb inequality   (\ref{BL}), we have
      \[
   \sum_{ |k_j| \le n  \atop 1\le j \le M}  \prod _{j=1} ^M | \rho(k_j) |  |\rho( {\bf k}  \cdot  {\bf v} )| \leq C 
       \prod_{i=1} ^{M+1}  \left(\sum_{|k| \leq n} |\rho(k)|^{\frac{1}{p_i}}\right)^{p_i} \,,
      \]
where   $p_i \le 1$ and  $\sum_{i=1}^{M+1} p_i = M$.  Choosing $p_i =  M/ (M+1) $ for $i=1,\dots, M+1$,  we get   inequality (\ref{equ6}).
To show  (\ref{equ7}), we make the decomposition $  |\rho(k)|^{1+ \frac 1M}= |\rho(k)|^{1- \frac 1M}  |\rho(k)|^{ \frac 2M}$ and apply H\"older's inequality with exponents $p= \frac M  {M-1}$ and $q=M$. Finally, to show (\ref{equ21}), we decompose the sum into the product of the sum with respect to the $k_i$'s that appear in ${\bf k } \cdot {\bf v}$ and the sum of the remaining terms. 
	 \end{proof}

 \begin{lemma}
 Fix an integer $M\ge 3$ and assume  $\sum_{k \in \mathbb{Z}}
\rho(k)^2<\infty$.  We have
\begin{equation}  \label{equ22}
  \sum_{|k_j| \le n  \atop 1\le j \le M}   \rho(k_1)^2 |\rho( {\bf k}  \cdot  {\bf v} )|  \prod _{j=2} ^{M} | \rho(k_j) |   \le C \left(\sum_{|k| \leq n} |\rho(k)|\right)^{M-2},
\end{equation}
where ${\bf k} = (k_1, \dots, k_M)$ and ${\bf v} \in \R^M$ is a fixed vector whose  components are  $0
$, $1$or $-1$ and it has at least two nonzero components.   
\end{lemma}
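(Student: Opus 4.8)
The plan is to reduce \eqref{equ22} to an application of the rank-one Brascamp--Lieb inequality \eqref{BL}, exactly as in the proof of \eqref{equ6}, after first peeling off the squared factor $\rho(k_1)^2$. The key observation is that because $\mathbf v$ has at least two nonzero components, we may always arrange (relabelling the $k_j$, $j\ge 2$, if necessary) that at least one index $j_0\in\{2,\dots,M\}$ appears in $\mathbf k\cdot\mathbf v$; write $\mathbf k\cdot\mathbf v = \pm k_1 + (\text{stuff in }k_2,\dots,k_M)$ or, if $k_1$ does not occur, $\mathbf k\cdot\mathbf v$ is itself a nonzero combination of $k_2,\dots,k_M$.

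First I would handle the easier case where $k_1$ does \emph{not} appear in $\mathbf k\cdot\mathbf v$. Then the sum factors as
\[
\Bigl(\sum_{|k_1|\le n}\rho(k_1)^2\Bigr)\cdot\sum_{\substack{|k_j|\le n\\ 2\le j\le M}} |\rho(\mathbf k\cdot\mathbf v)|\prod_{j=2}^{M}|\rho(k_j)|,
\]
the first factor is bounded by the hypothesis $\sum_k\rho(k)^2<\infty$, and the second factor is estimated by \eqref{equ21} (with $M$ replaced by $M-1$, which is $\ge 2$), giving the bound $C(\sum_{|k|\le n}|\rho(k)|)^{M-2}$, as desired.

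Next, the main case: $k_1$ appears in $\mathbf k\cdot\mathbf v$ together with at least one other $k_{j_0}$. Here I would apply the Brascamp--Lieb inequality \eqref{BL} directly to the $M+1$ factors $\rho(k_1)^2, |\rho(k_2)|,\dots,|\rho(k_M)|, |\rho(\mathbf k\cdot\mathbf v)|$, with the vectors $\mathbf v_1=\mathbf e_1,\dots,\mathbf v_M=\mathbf e_M,\mathbf v_{M+1}=\mathbf v$ in $\R^M$. One needs exponents $p_1,\dots,p_{M+1}$ with $\sum p_i = M$, each $p_i\le 1$, and the subset condition (ii) of Proposition~\ref{prop2.10}. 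The device is to allocate the ``mass'' $2$ of the $\rho(k_1)^2$ factor to the exponent $p_1$ by instead treating the factor as $\bigl(|\rho(k_1)|^{2/p_1}\bigr)^{p_1}$; concretely one chooses $p_1$ close to $1$ and distributes the rest so that all $p_i\le 1$, $\sum p_i = M$, and so that condition (ii) holds (it holds automatically for $\mathbf e_1,\dots,\mathbf e_M$ and for any set containing two distinct $\mathbf e_j$'s together with $\mathbf v$, using that $\mathbf v$ has $\ge 2$ nonzero coordinates). After applying \eqref{BL} one gets a product $\bigl(\sum_{|k|\le n}|\rho(k)|^{2/p_1}\bigr)^{p_1}\prod_{i\ge 2}\bigl(\sum_{|k|\le n}|\rho(k)|^{1/p_i}\bigr)^{p_i}$; choosing $p_1$ so that $2/p_1\ge 2$ (e.g. $p_1\le 1$) keeps the first sum finite by the square-summability hypothesis, and choosing the remaining $p_i$ equal to $(M-1)/M$ or as needed makes each remaining factor a power of $\sum_{|k|\le n}|\rho(k)|$; matching the total exponent to $M-2$ completes the estimate. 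The arithmetic of checking that such $p_i$ exist satisfying all three constraints of Proposition~\ref{prop2.10} simultaneously is the only delicate point, but it is a finite linear-programming check entirely analogous to the ones already carried out in the proofs of \eqref{equ6} and \eqref{equ7}.

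The main obstacle I anticipate is bookkeeping rather than conceptual: one must verify that the subset condition (ii) of the Brascamp--Lieb inequality is met for \emph{every} $I\subset\{1,\dots,M\}$ once the squared factor forces $p_1$ to be large, and this is where the hypothesis that $\mathbf v$ has at least two nonzero components is essential — it guarantees that $\mathbf v$ together with any single $\mathbf e_j$ spans a $2$-dimensional space, so no subset constraint is violated. I would organize the write-up by first isolating the $\rho(k_1)^2$ factor, then splitting into the two cases above, and in the main case explicitly exhibiting one valid choice of the $p_i$ (depending only on $M$), so that the reader can check condition (ii) by inspection.
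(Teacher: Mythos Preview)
Your strategy is the paper's: split off the case where $k_1$ is absent from $\mathbf v$, and in the remaining case apply the Brascamp--Lieb inequality \eqref{BL} with $p_1=1$ on the squared factor and equal exponents on the rest, then finish with \eqref{equ7}. Two points need tightening.

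First, your verification of condition (ii) is incomplete, and with the exponents you propose it can actually fail. You check only that $\mathbf v$ together with a single $\mathbf e_j$ spans a plane, but the constraint must hold for every subset. Take $M=3$ and $\mathbf v=(1,1,0)$: the subset $I=\{1,2,4\}$ corresponding to $\mathbf e_1,\mathbf e_2,\mathbf v$ spans a $2$-dimensional space, yet $p_1+p_2+p_4=1+\tfrac23+\tfrac23=\tfrac73>2$. The paper avoids this by first reducing to the case where \emph{all} components of $\mathbf v$ are nonzero: if some $v_j=0$ with $j\ge 2$, factor out $\sum_{|k_j|\le n}|\rho(k_j)|$ and apply the result with $M$ replaced by $M-1$; the case $v_1=0$ is your easy case. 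After this reduction the choice $p_1=1$, $p_2=\cdots=p_{M+1}=\tfrac{M-1}{M}$ satisfies (ii) for every subset, because whenever $M+1\in I$ and $|I|\le M$ the vector $\mathbf v$ has a nonzero coordinate outside $I\setminus\{M+1\}$ and hence is independent of those $\mathbf e_j$.

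Second, the sentence ``makes each remaining factor a power of $\sum_{|k|\le n}|\rho(k)|$'' is not correct: with $p_i=\tfrac{M-1}{M}$ each factor is $\bigl(\sum|\rho(k)|^{M/(M-1)}\bigr)^{(M-1)/M}$, and their product is $\bigl(\sum|\rho(k)|^{1+1/(M-1)}\bigr)^{M-1}$. You still need the H\"older step \eqref{equ7} (with $M$ replaced by $M-1$, using $\sum\rho(k)^2<\infty$) to convert this to $C\bigl(\sum|\rho(k)|\bigr)^{M-2}$. You cite \eqref{equ7} at the end, so you may have had this in mind, but it should be stated as a separate step after Brascamp--Lieb rather than folded into the choice of the $p_i$.
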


\begin{proof}
It suffices to assume that all the components of ${\bf v}$ are nonzero. In this case, we apply  the  Brascamp-Lieb inequality    (\ref{BL}) with exponents $p_1=1$ and $p_2 = \cdots =p_{M+1} = \frac  {M-1}M$ and inequality (\ref{equ7})  with $M$ replaced by $M-1$. 
\end{proof}

\begin{lemma}
 Fix an integer $M\ge 3$ and assume  $\sum_{k \in \mathbb{Z}}
\rho(k)^2<\infty$.  We have
\begin{equation}  \label{equ23}
  \sum_{ |k_j| \le n  \atop 1\le j \le M} |\rho( {\bf k}  \cdot  {\bf v} ) \rho( {\bf k}  \cdot  {\bf w} )|  \prod _{j=1} ^{M} | \rho(k_j) |   \le C \left(\sum_{|k| \leq n} |\rho(k)|\right)^{M-2}.
\end{equation}
where ${\bf k} = (k_1, \dots, k_M)$ and ${\bf v}, {\bf w} \in \R^M$ are  linearly independent vectors,  whose  components are  $0
$, $1$or $-1$ and they  have at least two nonzero components. 
\end{lemma}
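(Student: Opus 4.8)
The plan is to reduce \eqref{equ23} to the already-established inequality \eqref{equ22} by a change of variables that turns one of the two linear forms into a coordinate. Since $\mathbf{v}$ and $\mathbf{w}$ are linearly independent vectors with entries in $\{0,1,-1\}$, at least one of them — say $\mathbf{v}$ — has a nonzero component, say in the $j_0$-th slot. First I would perform the substitution $k_{j_0}' = \mathbf{k}\cdot\mathbf{v}$, keeping the other $k_j$ unchanged. This is a bijection of $\mathbb{Z}^M$, and it expresses $\mathbf{k}\cdot\mathbf{w}$ as a new linear form $\mathbf{k}'\cdot\mathbf{w}'$ in the new variables; because $\mathbf{v},\mathbf{w}$ are linearly independent, this form genuinely involves at least one variable other than $k_{j_0}'$, and in fact (since $\mathbf{w}$ had at least two nonzero components and we only altered the $j_0$-th coordinate) we may arrange that $\mathbf{w}'$ still has at least two nonzero components. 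The catch is that after this substitution the factors $|\rho(k_j)|$ for $j\ne j_0$ are untouched, but the factor $|\rho(k_{j_0})|$ becomes $|\rho(\mathbf{k}'\cdot\tilde{\mathbf{v}})|$ for some vector $\tilde{\mathbf{v}}$, so the product is now $|\rho(\mathbf{k}'\cdot\mathbf{w}')|\,|\rho(\mathbf{k}'\cdot\tilde{\mathbf v})|\,\prod_{j\ne j_0}|\rho(k_j)|$ — i.e., one of the original "coordinate" factors has been replaced by a linear-combination factor. So the bound we actually obtain from this naive move is \emph{weaker} than what \eqref{equ22} would give; the substitution alone does not suffice.

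A cleaner route is to bound directly using the rank-one Brascamp–Lieb inequality, Proposition \ref{prop2.10}. The left-hand side of \eqref{equ23} is $\sum_{\mathbf{k}\in\mathbb{Z}^M}\prod_{j=1}^{M+2} f_j(\mathbf{k}\cdot\mathbf{v}_j)$, where $f_j = |\rho|$ for all $j$, the first $M$ vectors $\mathbf{v}_j$ are the standard basis vectors $\mathbf{e}_1,\dots,\mathbf{e}_M$ of $\mathbb{R}^M$, and $\mathbf{v}_{M+1}=\mathbf{v}$, $\mathbf{v}_{M+2}=\mathbf{w}$. I want to choose exponents $p_1,\dots,p_{M+2}$ with $\sum_j p_j = M$, each $p_j\le 1$, and the subset condition (ii) of Proposition \ref{prop2.10}. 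Take $p_{M+1}=p_{M+2}=1$ and $p_1=\dots=p_M=\frac{M-2}{M}$; then $\sum p_j = (M-2) + 2 = M$, each $p_j\le 1$ (as $M\ge 3$). For the subset condition: since $\mathbf{v}$ and $\mathbf{w}$ are linearly independent, $\mathrm{Span}\{\mathbf{v},\mathbf{w}\}$ has dimension $2$, so any $I$ containing both indices $M+1,M+2$ but no $\mathbf{e}_i$ contributes $p_{M+1}+p_{M+2}=2\le 2$; adding any one $\mathbf{e}_i$ raises the dimension to $3$ while the sum becomes $2+\frac{M-2}{M}<3$; and in general adding $t$ of the basis vectors gives sum $2+t\frac{M-2}{M}\le 2+t$ for $t\ge 0$. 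Subsets involving only basis vectors and at most one of $\mathbf{v},\mathbf{w}$ are even easier. So condition (ii) holds, and \eqref{BL} yields
\[
\text{LHS of \eqref{equ23}} \;\le\; C\left(\sum_{|k|\le n}|\rho(k)|^{\frac{M}{M-2}}\right)^{M-2}\left(\sum_{|k|\le n}|\rho(k)|\right)^2.
\]
Then I would invoke H\"older's inequality, exactly in the spirit of \eqref{equ7}, to absorb the first factor: writing $|\rho(k)|^{M/(M-2)} = |\rho(k)|^{\theta}\cdot|\rho(k)|^{2\theta'}$ with a suitable split and using $\sum_k\rho(k)^2<\infty$, one gets $\big(\sum_{|k|\le n}|\rho(k)|^{M/(M-2)}\big)^{M-2}\le C\big(\sum_{|k|\le n}|\rho(k)|\big)^{M-4}$ when $M\ge 4$; combined with the trailing $\big(\sum|\rho(k)|\big)^2$ this gives the claimed exponent $M-2$. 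For $M=3$ one argues slightly differently: here one of $\mathbf{v},\mathbf{w}$, together with the linear independence, allows a change of variables reducing to a bounded sum of products of at most two $\rho$-factors plus one leftover $\sum_{|k|\le n}|\rho(k)|$, which is exactly $\big(\sum|\rho(k)|\big)^{M-2}$.

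The main obstacle I anticipate is the bookkeeping at the boundary cases of $M$ and in verifying the Brascamp–Lieb subset condition when $\mathbf{v}$ or $\mathbf{w}$ has exactly two nonzero entries that overlap with each other's support — one must check that no subset $I$ of indices violates $\sum_{j\in I}p_j\le \dim\mathrm{Span}\{\mathbf{v}_j:j\in I\}$, and the worst case is when $I$ consists of $\mathbf{v},\mathbf{w}$ and the two basis vectors $\mathbf{e}_i$ indexing their common support, for which the span is still only $3$-dimensional (not $4$) yet $\sum p_j = 2 + 2\cdot\frac{M-2}{M} = 4 - \frac{4}{M} < 3$ precisely when $M<4$, i.e. $M=3$ — which is why the $M=3$ case genuinely needs the separate change-of-variables argument rather than a direct appeal to \eqref{BL} with these exponents. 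Once that is isolated, the remaining work is routine and parallels the proofs of \eqref{equ22} and \eqref{equ7} already given.
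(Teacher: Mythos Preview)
Your Brascamp--Lieb verification has a genuine gap. You assert that ``adding any one $\mathbf{e}_i$ [to $\{\mathbf{v},\mathbf{w}\}$] raises the dimension to $3$,'' but this is false whenever $\mathbf{e}_i\in\mathrm{Span}\{\mathbf{v},\mathbf{w}\}$. Concretely, take $\mathbf{v}=\mathbf{e}_1+\mathbf{e}_2$ and $\mathbf{w}=\mathbf{e}_1-\mathbf{e}_2$ (both have exactly two nonzero entries, are linearly independent, and satisfy the hypotheses). Then for $I=\{1,2,M+1,M+2\}$ one has $\mathrm{Span}\{\mathbf{e}_1,\mathbf{e}_2,\mathbf{v},\mathbf{w}\}$ two-dimensional, while $\sum_{j\in I}p_j = 2\cdot\tfrac{M-2}{M}+2 = 4-\tfrac{4}{M}>2$ for every $M\ge 3$. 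So condition (ii) of Proposition~\ref{prop2.10} fails, and your exponent choice is inadmissible in this configuration --- not only for $M=3$, but for all $M$. Your last paragraph partially senses the problem but miscomputes the span as three-dimensional rather than two-dimensional, which leads you to the wrong conclusion that only $M=3$ is affected.

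The paper sidesteps this by splitting on the number $m$ of coordinates jointly involved in $\mathbf{v}$ and $\mathbf{w}$, treating the $m$ active coordinates with equal Brascamp--Lieb exponents $p_i=m/(m+2)$ and the remaining $M-m$ coordinates trivially. When $m=3$ one takes $p_i=3/5$ and checks that any three of the five vectors span at least a plane (so $9/5\le 2$); when $m=4$ one takes $p_i=2/3$; for $m\ge 5$ one assigns $p_i=2/3$ to six chosen factors and $p_i=1$ to the rest. The case $m=2$ (your counterexample) is not written out explicitly in the paper but is immediate: the sum over $k_1,k_2$ of $|\rho(k_1)\rho(k_2)\rho(k_1+k_2)\rho(k_1-k_2)|$ is bounded by a constant (e.g.\ Brascamp--Lieb with $p_i=1/2$), and the remaining $M-2$ coordinates supply exactly $\big(\sum_{|k|\le n}|\rho(k)|\big)^{M-2}$. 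Your global-exponent approach could be repaired along similar lines, but not without some form of this case analysis.
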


\begin{proof}
Suppose first that $ \rho( {\bf k}  \cdot  {\bf v} ) \rho( {\bf k}  \cdot  {\bf w} )$ involves only three $k_i$'s, for instance, $k_1, k_2, k_3$. In this case, applying the Brascamp-Lieb inequality   (\ref{BL}) with exponents $p_i=3/5$, $1\le i\le 5$, yields,
\[
 \sum_{ |k_i| \le n  \atop 1\le i \le 3} |\rho(k_1)\rho(k_2)\rho(k_3)\rho( {\bf k}  \cdot  {\bf v} ) \rho( {\bf k}  \cdot  {\bf w} )|
 \le  \left( \sum_{|k|\le n} |\rho(k)| ^{\frac 53} \right)^3.
 \]
 Notice that  assumption (ii)  in Proposition \ref{prop2.10} is satisfied because  three of the vectors  $(1,0,0)$, $(0,1,0)$, $(0,0,1)$,
 ${\bf v}$, ${\bf w}$ may span a subspace of dimension $2$, and we have $3\times 3/5 =  9/5 \le 2$.
 Then, making the decomposition $|\rho(k)| ^{\frac 53}=|\rho(k)| ^{\frac 13}|\rho(k)| ^{\frac 43}$ and using H\"older's inequality with exponents $p=3$ and $q= \frac 32$, yields
\[
 \left( \sum_{|k|\le n} |\rho(k)| ^{\frac 53} \right)^3 \le  C  \sum_{|k|\le n} |\rho(k)|,
 \]
 which gives the desired estimate.
 
If $ \rho( {\bf k}  \cdot  {\bf v} ) \rho( {\bf k}  \cdot  {\bf w}  )$ involves four $k_i$'s, for instance, $k_1, k_2, k_3, k_4$, we  apply the Brascamp-Lieb inequality   (\ref{BL}) with exponents $p_i=2/3$, $1\le i\le 6$,  and we obtain
\[
 \sum_{ |k_i| \le n  \atop 1\le i \le 3} |\rho(k_1)\rho(k_2)\rho(k_3) \rho(k_4) \rho( {\bf k}  \cdot  {\bf v} ) \rho( {\bf k}  \cdot  {\bf w} )|
 \le  \left( \sum_{|k|\le n} |\rho(k)| ^{\frac 32} \right)^4.
 \]
 Then,  using (\ref{equ7}) with $M=2$, yields
\[
 \left( \sum_{|k|\le n} |\rho(k)| ^{\frac 32} \right)^4 \le  C  \left( \sum_{|k|\le n} |\rho(k)|\right)^2,
 \]
 which gives the desired estimate. Finally, if $ \rho( {\bf k}  \cdot  {\bf v} ) \rho( {\bf k}  \cdot  {\bf w} )$ involves more than four $k_i$'s, the result follows again from the Brascamp-Lieb inequality  (\ref{BL}), where we choose  $p_i=2/3$ for the factors
 $ \rho( {\bf k}  \cdot  {\bf v} ) $, $ \rho( {\bf k}  \cdot  {\bf  w})$ and  for the  four factors  $\rho(k_i)$ such that $k_i$ appears in the linear combination with less factors, and we choose $p_i=1$ for all the remaining factors   $\rho(k_i)$ appearing in the linear combinations  $ \rho( {\bf k}  \cdot  {\bf v} ) $ or $ \rho( {\bf k}  \cdot  {\bf w} )$.
\end{proof}

The last lemma summarizes some inequalities derived from the application of H\"older's inequality.
\begin{lemma}
For any $M\ge 2$, we have   
  \begin{equation}
\label{ho1}
		       \left(\sum_{|k| \leq n} |\rho(k)|^{1+\frac 1M} \right)^M  \le     \left( \sum_{|k| \le n} |\rho(k)|  \right) \left( \sum_{|k| \leq n} |\rho(k)|^{\frac  M{M-1}} \right)^{M-1}
		    \end{equation}
		    and
  \begin{equation}  
 \label{ho2}
 \left( \sum_{|k| \leq n} |\rho(k)|  \right)^3 \le  n   \left(\sum_{|k| \leq n} |\rho(k)|^\frac{3}{2} \right) ^{2}.
			   \end{equation}
Furthermore, if  $\sum_{|k| \leq n} |\rho(k)|^2 < \infty$, then
\begin{equation}
\label{ho3}
		       \sum_{|k| \leq n} |\rho(k)|^\frac{3}{2}  \leq C \left(\sum_{|k| \leq n} |\rho(k)|^\frac{4}{3}\right)^{\frac 34}.
		         \end{equation}
\end{lemma}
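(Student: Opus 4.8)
The plan is to obtain all three inequalities from Hölder's inequality for sums, the only point of care being to match the conjugate exponents to the target powers. Throughout write $a_k=|\rho(k)|$, with the index running over $\{k\in\mathbb{Z}:|k|\le n\}$ unless stated otherwise.

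For \eqref{ho1} I would split $a_k^{1+\frac1M}=a_k^{1/M}\cdot a_k$ and apply Hölder with conjugate exponents $p=M$ and $p'=\frac{M}{M-1}$ (note $\frac1p+\frac1{p'}=\frac1M+\frac{M-1}{M}=1$): since $\bigl(a_k^{1/M}\bigr)^{p}=a_k$ and $a_k^{p'}=a_k^{M/(M-1)}$, this gives
\[
\sum a_k^{1+\frac1M}\ \le\ \Bigl(\sum a_k\Bigr)^{\frac1M}\Bigl(\sum a_k^{\frac{M}{M-1}}\Bigr)^{\frac{M-1}{M}}.
\]
Raising both sides to the $M$-th power yields \eqref{ho1} exactly.

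For \eqref{ho2} I would write $\sum a_k=\sum a_k\cdot 1$ and apply Hölder with exponents $\tfrac32$ and $3$:
\[
\sum a_k\ \le\ \Bigl(\sum a_k^{3/2}\Bigr)^{2/3}\Bigl(\sum_{|k|\le n}1\Bigr)^{1/3}\ =\ (2n+1)^{1/3}\Bigl(\sum a_k^{3/2}\Bigr)^{2/3}.
\]
Cubing gives $\bigl(\sum a_k\bigr)^3\le(2n+1)\bigl(\sum a_k^{3/2}\bigr)^2$, and absorbing $2n+1\le 3n$ into the generic constant gives \eqref{ho2} (strictly with a harmless multiplicative constant in front of $n$, which is irrelevant in every application). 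For \eqref{ho3} I would interpolate the exponent $\tfrac32$ between $\tfrac43$ and $2$: split $a_k^{3/2}=a_k\cdot a_k^{1/2}$ and apply Hölder with conjugate exponents $\tfrac43$ and $4$, so that $a_k^{4/3}$ and $\bigl(a_k^{1/2}\bigr)^{4}=a_k^{2}$ appear:
\[
\sum a_k^{3/2}\ \le\ \Bigl(\sum a_k^{4/3}\Bigr)^{3/4}\Bigl(\sum a_k^{2}\Bigr)^{1/4}\ \le\ \Bigl(\sum_{k\in\mathbb{Z}}|\rho(k)|^2\Bigr)^{1/4}\Bigl(\sum a_k^{4/3}\Bigr)^{3/4},
\]
where the standing assumption $\sum_{k\in\mathbb{Z}}|\rho(k)|^2<\infty$ is used to bound the last factor by a constant independent of $n$; this is \eqref{ho3} with $C=\bigl(\sum_{k\in\mathbb{Z}}|\rho(k)|^2\bigr)^{1/4}$.

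There is no genuine obstacle here: the proof is purely a matter of choosing the Hölder exponents correctly, together with the elementary observation in \eqref{ho2} that the number of summands is $2n+1$ rather than $n$, which only affects the implicit constant.
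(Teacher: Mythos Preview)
Your proof is correct and follows essentially the same route as the paper's: for \eqref{ho1} and \eqref{ho3} the paper uses exactly the same decompositions $|\rho(k)|^{1+1/M}=|\rho(k)|\cdot|\rho(k)|^{1/M}$ and $|\rho(k)|^{3/2}=|\rho(k)|\cdot|\rho(k)|^{1/2}$ with the same H\"older exponents, and for \eqref{ho2} the paper simply says ``we use again H\"older's inequality'' without further detail. Your remark that the number of summands is $2n+1$ rather than $n$ (so that \eqref{ho2} holds only up to a harmless constant) is a fair observation that the paper glosses over.
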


\begin{proof}
To show (\ref{ho1}) we  make use of the decomposition  $|\rho(k)|^{ 1+\frac 1M}= |\rho(k)| |\rho(k)|^\frac{1}{M}$ and apply H\"older's inequality with exponents $p= \frac  M {M-1}$ and $q=M$.  For   (\ref{ho3}) 
we use  the decomposition
$|\rho(k)|^\frac{3}{2}= |\rho(k)| |\rho(k)|^\frac{1}{2}$ and apply H\"older's inequality with exponents $p= \frac 43$ and $q=4$.    Finally, (\ref{ho2})  we use again H\"older's inequality. 
\end{proof}

\begin{minipage}{1.0\textwidth}
\vskip 1cm
David Nualart and Hongjuan Zhou: Department of Mathematics, University of Kansas, 405 Snow Hall, Lawrence, Kansas, 66045, USA. 

{\it E-mail address:} nualart@ku.edu,  zhj@ku.edu
\end{minipage}

\begin{thebibliography}{99}


 \bibitem{barthe}
 F. Barthe (1998). On a reverse form of the Brascamp-Lieb inequality. {\it  Invent. Math.} {\bf 134}, 335-368.
 
 \bibitem{bcct}  J. Bennett, A. Carbery, M. Christ and T. Tao (2008). The Brascamp-Lieb inequalities: finiteness, structure and extremals. {\it Geometric and Functional Analysis} {\bf 17}, 1343-1415.

\bibitem{bbl}
H. Bierm\'e, A. Bonami and J. L\'eon (2011).
Central Limit Theorems and Quadratic Variations in terms of  Spectral Density.
{\it Electron. J. Probab.} {\bf 16}, 362-395.

\bibitem{bbnp}
H. Bierm\'e, A. Bonami, I. Nourdin and G. Peccati  (2012). Optimal Berry-Esseen rates on the Wiener space: the barrier of third and fourth cumulants. {\it ALEA}, {\bf 9}, no. 2, 473-500.
 
\bibitem{bcij}
A.~Benassi, S.~Cohen, J.~Istas, and S.~Jaffard (1998).
\newblock Identification of filtered white noises.
\newblock {\em Stochastic Processes and their Applications}, {\bf 75}, 31-49.
 
\bibitem{bl}
 H. J. Brascamp and E. H. Lieb (1976). Best constants in YoungÕs inequality, its converse, and its generalization to more than three functions. {\it Adv. Math.} {\bf 20}, 151-173.
 
\bibitem{bm}
P. Breuer and P. Major (1983). Central limit theorems for non-linear functionals of Gaussian fields. {\it J. Mult. Anal.} {\bf 13}, 425-441.

 
\bibitem{ChenGoldShao} L. H. Y. Chen, L. Goldstein and Q.-M. Shao (2011). {\it Normal Approximation by Stein's Method}. Springer-Verlag, Berlin.

\bibitem{cnw} J.M. Corcuera, D. Nualart and J.H.C. Woerner (2006). Power variation of some integral fractional processes. {\it Bernoulli}, {\bf 12}, no. 4, 713-735.

 \bibitem{co10}
 J.F. Coeurjolly (2001).
 \newblock Estimating the parameters of a fractional {B}rownian motion by
   discrete variations of its sample paths.
 \newblock {\em Statistical Inference for Stochastic Processes} {\bf 4}, 199-227.
   
\bibitem{il97}
J.~Istas and G.~Lang (1997).
\newblock Quadratic variations and estimation of the local {H}\"{o}lder index
  of a Gaussian process.
\newblock {\em Annales de l'Institut Henri Poincar\'{e} Probabilit\'{e}s et
  Statistiques}, {\bf 33}, no. 4, 407-436.
  
\bibitem{km12}
K.~Kubilius and Y.~Mishura (2012).
\newblock The rate of convergence of {H}urst index estimate for the stochastic
  differential equation.
\newblock {\em Stochastic Processes and their Applications}, {\bf 122}, no.
  11, 3718-3739. 

\bibitem{nou} I. Nourdin (2008). Asymptotic behavior of weighted quadratic and cubic variations of fractional Brownian motion. {\it Ann. Probab.} {\bf 36}, no. 6, 2159-2175.

\bibitem{nour}
I. Nourdin (2012).
 {\it Selected Aspects of Fractional Brownian Motion}.
  Springer Verlag.

\bibitem{nounua} I. Nourdin and D. Nualart (2010). Central limit theorems for multiple Skorohod integrals. {\it J. Theoret. Probab.}  {\bf 23}, no. 1, 39-64.

 
\bibitem{np-ptrf}
\rm I. Nourdin and G. Peccati (2009).
\rm Stein's method on Wiener chaos.
\it Probab. Theory Relat. Fields \rm {\bf 145}, no. 1, 75-118.


\bibitem{survey}
\rm I. Nourdin and G. Peccati (2010).
\rm Stein's method meets Malliavin calculus: a short survey with new estimates.
\rm In the volume: \it Recent Development in Stochastic Dynamics and Stochastic Analysis, \rm World Scientific, 207-236.


\bibitem{np-book} I. Nourdin and G. Peccati (2012). {\it Normal Approximations with Malliavin Calculus. From Stein's Method to Universality}. Cambridge University Press.


\bibitem{np-15}
 I. Nourdin and G. Peccati (2015). The optimal fourth moment theorem. {\it Proc. Amer. Math.Soc.} {\bf 143}, 3123-3133.

\bibitem{noupecpod} I. Nourdin, G. Peccati and M. Podolskij (2011). Quantitative Breuer-Major theorems. {\it Stoch. Proc. Appl.} {\bf 121}, no. 4, 793-812.

\bibitem{noupecrev} I. Nourdin, G. Peccati and A. R\' eveillac (2010). Multivariate normal approximation using Stein's method and Malliavin calculus. {\it Ann. I.H.P.} {\bf 46}, no. 1, 45-58.
 

\bibitem{nualartbook}
\rm D. Nualart (2006).
\it The Malliavin calculus and related topics.
\rm Springer-Verlag, Berlin, second edition.

\bibitem{CBMS}
D. Nualart (2009). {\it  Malliavin Calculus and Its Applications. } American Mathematical Society, CBMS regional conference series in mathematics.

\bibitem{nuaort} D. Nualart and S. Ortiz-Latorre (2008). Central limit theorems for multiple stochastic integrals and Malliavin calculus. {\it Stoch. Proc. Appl.} {\bf 118}, no. 4, 614-628.

\bibitem{nunugio}
\rm D. Nualart and G. Peccati (2005).
\rm Central limit theorems for sequences of multiple stochastic integrals.
\it Ann. Probab. \rm {\bf 33}, no. 1, 177-193.
 
 
\bibitem{py2} G. Peccati and M. Yor (2004). Hardy's inequality in $L^2([0,1])$ and principal values of Brownian local times. In: {\it Asymptotic Methods in Stochastics}, AMS, Fields Institute Communications Series, 49-74.


  
\bibitem{tv09}
C.~Tudor and F.~Viens (2009).
\newblock Variations and estimators for selfsimilarity parameter through
  {M}alliavin calculus.
\newblock {\em Ann. of Probab.} {\bf 37}, no. 6, 2093-2134.


\end{thebibliography}
\end{document}